 \newtheorem{ittheorem}{Theorem}
 \newtheorem{itlemma}{Lemma}
 \newtheorem{itproposition}{Proposition}
 \newtheorem{itdefinition}{Definition}
 \newtheorem{itremark}{Remark}
 \newtheorem{itclaim}{Claim}
 \newtheorem{itcorollary}{\bf Corollary}
 \newenvironment{theorem}{\addtocounter{equation}{1}
 \begin{ittheorem}}{\end{ittheorem}}
 \newenvironment{lemma}{\addtocounter{equation}{1}
 \begin{itlemma}}{\end{itlemma}}
 \newenvironment{proposition}{\addtocounter{equation}{1}
 \begin{itproposition}}{\end{itproposition}}
 \newenvironment{definition}{\addtocounter{equation}{1}
 \begin{itdefinition}}{\end{itdefinition}}
 \newenvironment{remark}{\addtocounter{equation}{1}
 \begin{itremark}}{\end{itremark}}
 \newenvironment{claim}{\addtocounter{equation}{1}
 \begin{itclaim}}{\end{itclaim}}
 \newenvironment{proof}{\noindent {\bf Proof.\,}
 }{\hspace*{\fill}$\qed$\medskip}
 \newenvironment{corollary}{\addtocounter{equation}{1}
 \begin{itcorollary}}{\end{itcorollary}}
 \newcommand{\be}[1]{\begin{eqnarray*}\label{#1}}
 \newcommand{\ee}{\end{eqnarray*}}
 \newcommand{\bl}[1]{\begin{lemma}\label{#1}}
 \newcommand{\el}{\end{lemma}}
 \newcommand{\br}[1]{\begin{remark}\label{#1}}
 \newcommand{\er}{\end{remark}}
 \newcommand{\bt}[1]{\begin{theorem}\label{#1}}
 \newcommand{\et}{\end{theorem}}
 \newcommand{\bd}[1]{\begin{definition}\label{#1}}
 \newcommand{\ed}{\end{definition}}
 \newcommand{\bcl}[1]{\begin{claim}\label{#1}}
 \newcommand{\ecl}{\end{claim}}
 \newcommand{\bp}[1]{\begin{proposition}\label{#1}}
 \newcommand{\ep}{\end{proposition}}
 \newcommand{\bc}[1]{\begin{corollary}\label{#1}}
 \newcommand{\ec}{\end{corollary}}
 \newcommand{\bpr}{\begin{proof}}
 \newcommand{\epr}{\end{proof}}
 \newcommand{\bi}{\begin{itemize}}
 \newcommand{\ei}{\end{itemize}}
 \newcommand{\ben}{\begin{enumerate}}
 \newcommand{\een}{\end{enumerate}}
\def\un{{1\cdots 1}}
\def\zll{\{\,0,\dots,l-1\,\}}
\def\ul{\{\,1,\dots,\ell\,\}}
\def\zm{\{\,0,\dots,m\,\}}
\def\um{\{\,1,\dots,m\,\}}
\def\zu{\{\,0,1\,\}}
\def\zul{\{\,0,1\,\}^\ell}
\def\zulc{\Big(\{\,0,1\,\}^\ell\Big)^2}
\def\zulm{\{\,0,1\,\}^\ell}
\def\zul{\{\,0,1\,\}^\ell}
\def\tde{\tau_\delta}
\def\zulm{\big(\{\,0,1\,\}^\ell\big)^m} 
\def\uro{\smash{{U}^{\!\!\!\!\raise5pt\hbox{$\scriptstyle o$}}}}
\def\ulmo{\{\,1,\dots,\ell-1\,\}}
\def\um{\{\,1,\dots,m\,\}}
\def\bp{{\overline{p}}}
\def\bp{{\overline{p}}}
\def\sel{\text{\rm sel}}
\def\switch{\text{\rm switch}}
\def\rank{\text{\rm rank}}
 \def \ba {\begin{array}}
 \def \ea {\end{array}}
 \def \qed {{\heartsuit\hfill}}
 \def \R {{\mathbb R}}
 \def \N {{\mathbb N}}
 \def \cS {{\cal S}}
 \def \cE {{\cal E}}
\def\ve{\varepsilon}
 \def \cC {{\cal C}}
 \def \cB {{\cal B}}
 \def \cM {{\cal M}}
 \def \cP {{\cal P}}
\def \qed {{\square\hfill}}
 \def \m {{\mu}}
 \def\cB{{\cal B}} \def\cC{{\cal C}} 
\def\cE{{\cal E}}   
\def\cI{{\cal I}}   
\def\cM{{\cal M}}   \def\cP{{\cal P}}
  \def\cS{{\cal S}}
 \def \cMH {{\cal M}_H}
\def \qed {{\square\hfill}}
\def\R{{\mathbb R}}
\def\N{{\mathbb N}}
\def\eqref#1{(\ref{#1})}
\def\card{\text{card}\,}
\begin{document}

\title{The quasispecies regime for\\ the simple genetic algorithm\\
with ranking selection}

 \author{
Rapha\"el Cerf
\\
DMA, 
{\'E}cole Normale Sup\'erieure
}

\maketitle



\begin{abstract}
\noindent
We study the simple genetic algorithm with a 
ranking selection mechanism (linear ranking or tournament).
We denote by~$\ell$ the length of
the chromosomes, by $m$
the population size, by $p_C$ the  
crossover probability and by $p_M$ 
the mutation probability.
We introduce 
a parameter~$\sigma$, called the selection drift, which measures
the selection intensity of the fittest chromosome.
We show that the dynamics of the genetic algorithm depend in
a critical way on the parameter
$$\pi
\,=\,\sigma(1-p_C)(1-p_M)^\ell\,.$$
%
If $\pi<1$, then the genetic algorithm operates
in a 
disordered regime: an advantageous mutant disappears with probability
larger than $1-1/m^\beta$, where $\beta$ is a positive exponent.
If $\pi>1$, then the genetic algorithm operates in a 
quasispecies regime: an advantageous mutant invades a positive
fraction of the population with probability larger than a constant~$p^*$
(which does not depend on~$m$).
We estimate next
the probability of the occurrence of a catastrophe
(the whole population falls
below a fitness level which was previously reached by a 
positive fraction of the population).
The asymptotic results suggest the following rules:

\noindent
$\bullet$ 
$\pi=\sigma(1-p_C)(1-p_M)^\ell$
should be slightly larger than~$1$; 

\noindent
$\bullet$ 
$p_M$ should be of order $1/\ell$;

\noindent
$\bullet$ 
$m$ should be larger than $\ell\ln\ell$;

\noindent
$\bullet$ 
the running time should be of exponential order in $m$.

\noindent
The first condition requires that
$ \ell p_M +p_C< \ln\sigma$. 
These conclusions must be taken with great care: 
they come from 
an asymptotic regime, and it is a formidable task
to understand the relevance of this regime for a real--world problem.
At least, we hope that these conclusions provide interesting
guidelines for the practical implementation of the 
simple genetic algorithm.
\end{abstract}



\section{Introduction}
Genetic algorithms
are search procedures based on the genetic mechanisms which guide natural evolution: 
selection, crossover and mutation.
The most cited initial references on genetic 
algorithms are the beautiful books of Holland \cite{HO}, who tried to
initiate a theoretical analysis of these processes, and of Goldberg \cite{GO},
who made a very attractive exposition of these algorithms.
The success of genetic algorithms over the years has been amazing.
They have been used to attack 
optimization problems of every possible kind.
Numerous variants, extensions and generalizations of the basic
genetic algorithms have been developed.
The literature on genetic algorithms is now so huge that it is beyond
my ability to compile a decent reasonable review.
Unfortunately, the theoretical understanding of the mechanisms at work in
genetic algorithms is still far from satisfactory.

We study here the
simple genetic algorithm with a ranking selection mechanism.
Ranking selection means that the selection mechanism depends only on the
ranking of the chromosomes according to the fitness function.
We consider mainly two popular selection mechanisms: linear ranking selection and
tournament selection.
The simple genetic algorithm operates on binary strings of length~$\ell$,
called the chromosomes.
The population size is denoted by~$m$.
We use the standard single point crossover and the crossover probability
is denoted by~$p_C$.
We use independent parallel mutation at each bit
and the mutation probability
is denoted by~$p_M$.
We introduce 
a parameter~$\sigma$, called the selection drift, which measures
the selection intensity of the fittest chromosome.
For linear ranking selection with parameters $\eta^-,\eta^+$, the 
selection drift~$\sigma$ is equal to $\eta^+$.
For tournament selection with parameter~$t$, the
selection drift~$\sigma$ is equal to $t$.
We show that the dynamics of the simple genetic algorithm depend in
a critical way on the parameter
$$\pi
\,=\,\sigma(1-p_C)(1-p_M)^\ell\,.$$
%
Heuristically, the parameter~$\pi$ might be interpreted as the mean
number of offsprings of the fittest chromosome from one generation
to the next. 
We prove that:
\medskip

\noindent
$\bullet$ If $\pi<1$, then the genetic algorithm operates
in a 
disordered regime: an advantageous mutant disappears with probability
larger than $1-1/m^\beta$, where $\beta$ is a positive exponent.
\medskip

\noindent
$\bullet$ If $\pi>1$, then the genetic algorithm operates in a 
quasispecies regime: an advantageous mutant invades a positive
fraction of the population with probability larger than a constant~$p^*$
(which does not depend on~$m$).
\medskip

\noindent
These results hold in the limit of large populations, when~$m$ grows
to~$\infty$.
One is naturally led to think that the parameters of the genetic algorithm
should be adjusted so that $\pi$ is larger than~$1$.
Yet we think that the most interesting regime is when~$\pi$ is 
only slightly
larger than~$1$.
Indeed, in order to increase~$\pi$, the mutation and 
crossover probabilities~$p_M$ and~$p_C$ should be decreased and this would
slow down the exploration of the space.
However an efficient search procedure should realize a delicate balance between
the exploration mechanism and the selection mechanism. 
This general idea is present 
in numerous works dealing with random optimization 
\cite{OCH,RPJ,NP3}.
Another reason is that we wish to avoid the premature convergence
of the genetic algorithm, i.e., an excessive concentration of the
population on the current best chromosome.
This problem has been encountered in practice and it is discussed
in several works on genetic algorithms (see for instance \cite{PC,XG,ZN}).
It seems therefore more judicious to choose 
``large" values of~$p_M$ and~$p_C$ compatible with the condition~$\pi>1$.
This means that the mutation probability should be of 
order $1/\ell$; more precisely, the condition $\pi>1$ implies that
$$ \ell p_M +p_C \,<\, \ln\sigma \,.$$
In particular, the crossover probability cannot be too large
in order to avoid the disordered regime.
It has already been observed in the practice
of genetic algorithms that it is sensible to take the mutation
probability $p_M$ of the same order than $1/\ell$.

Another outcome of our study concerns 
the occurrence of a catastrophe and the influence of
the population size.
Loosely speaking, a catastrophe occurs if the whole population falls
below a fitness level which was previously reached by a 
positive fraction of the population.
A straightforward strategy to prevent the occurrence of a catastrophe
is to use ``elitism", i.e., to retain automatically the best chromosome
from one generation to another.
Yet, in the quasispecies regime,
the simple genetic algorithm is robust enough to avoid
catastrophes for a very long time.
We prove that, when $\pi>1$, a catastrophe occurs typically after
a number of generations which is of exponential order in~$m$,
in fact of order $\exp(c^*m)$, where $c^*$ is a constant
depending on $\pi$ only.
With a small population size, the danger is that
a catastrophe might occur before the 
genetic algorithm
succeeds in finding an advantageous mutant. 
Thus the genetic algorithm will work efficiently only if the
population size is sufficiently large.
Suppose that the population is stuck on a local maximum.
The typical time to 
discover an advantageous mutant is of order $p_M^{-\Delta}$, where 
$0\leq\Delta\leq\ell$ and
$\Delta$ depends on the current population 
(rigorous estimates were derived in 
\cite{CEFR}).
We wish to ensure that,
with high probability, this discovery
will occur before a catastrophe.
So we should have
$$m\,\gg\,\frac{\Delta}{c^*}\ln \frac{1}{p_M}\,.$$
A natural strategy would be to take~$m$ very large.
Yet there is another practical constraint:
we wish to minimize the number of evaluations
of the fitness function.
Therefore we aim for the smallest
population size compatible with the desired goal. 
To take into account these contradictory constraints,
we suggest that the parameters $\sigma,p_C,p_M,\ell,m$ should 
be adjusted according
to the following rules:
\medskip

\noindent
$\bullet$ $\pi=\sigma(1-p_C)(1-p_M)^\ell$
should be slightly larger than~$1$; 
\medskip

\noindent
$\bullet$ $p_M$ should be of order $1/\ell$;
\medskip

\noindent
$\bullet$ $m$ should be larger than $\ell\ln\ell$;
\medskip

\noindent
$\bullet$ 
the running time should be of exponential order in $m$.
\medskip

\noindent
These conclusions must be taken with great care: they come
from 
an asymptotic regime, and it is a formidable task
to understand the relevance of this regime for a real--world problem.
At least, we hope that these conclusions provide interesting
guidelines for the practical implementation of the genetic algorithm.

We provide also sufficient 
conditions on the fitness function~$f$ ensuring that,
starting from
any population, the hitting time of 
the optimal chromosomes is polynomial
in~$\ell$.
We close with a condition ensuring the concentration of the invariant
probability measure of the genetic algorithm
on populations containing optimal chromosomes.
This condition reads:
$$\pi
\,=\,\sigma(1-p_C)(1-p_M)^\ell\,,\quad
p_M\,\geq\,\frac{c^*}{\ell}\,,\quad
m\geq m_0\,,\quad
m\geq c^*\ell\ln\ell\,.
$$
Here $c^*$ and $m_0$ are constants
depending on $\pi$ only.
This result is certainly less relevant in practice than the previous
ones, however it demonstrates the asymptotic validity of the
genetic algorithm and it is reassuring to know that
the genetic algorithm works in this asymptotic regime.
The other
good news is that this condition holds uniformly with respect to the
fitness function.
Hence a population of size of order $\ell\ln\ell$ is enough to search
an arbitrary function on the space $\zul$.

By the way,
the results obtained here vindicate a conjectural picture
outlined in \cite{CGA}.
Namely,
the genetic algorithm running on a fitness landscape is
a finite population model, approximating an infinite population model.
This infinite model presents several phase transitions, depending on the
geometry of the fitness landscape.
In a way, there is a phase transition associated to each local maximum.

The results presented here have been derived with the help of ideas coming
from the quasispecies theory.
In 1971,
Manfred
Eigen analyzed a simple system of replicating molecules and demonstrated
the existence of a critical mutation rate, called
the error threshold \cite{EI1}.
This fundamental result led to the notion of
quasi\-spe\-cies developed by Eigen, McCaskill
and Schuster \cite{ECS1}.
If the mutation rate exceeds the error threshold,
then, at equilibrium, the population is completely random.
If the mutation rate is below the error threshold, 
then, at equilibrium, the population contains a positive fraction of the
Master sequence (the most fit macromolecule) and a cloud of mutants which
are quite close to the Master sequence.
This specific distribution of individuals is called a quasispecies.

Several researchers have already argued that the notion of error threshold 
plays a role
in the dynamics of a genetic algorithm.
This is far from obvious, because Eigen's model is formulated for an infinite
population model. However there is evidence that a similar phenomenon occurs in finite
populations as well, and also in genetic algorithms.
In her PhD thesis \cite{OCH}, Ochoa demonstrated the occurrence of error thresholds in genetic
algorithms over a wide range of problems and landscapes. This very interesting
work is published in a series of conference papers
\cite{OCH4,OCH5,OCH8,OCH6,OCH3,OCH2,OCH7}.
One of the most interesting and inspiring 
works on the theory of genetic algorithms
that
I have read over the last years is the series of papers by
van Nimwegen, Crutchfield and Mitchell
\cite{NP1,NP2,NP3,NCM1,NCM2}.
In these papers, the authors perform a theoretical and experimental 
study of a genetic algorithm on a specific
class of fitness functions. Their analysis 
relies on techniques from mathematical
population genetics, molecular evolution theory and statistical physics.
Among the fundamental ingredients
guiding the analysis are the quasi\-species model, the
error threshold and metastability.
In the last work of the series \cite{NP3},
van Nimwegen
and 
Crutchfield 
describe an entire search effort surface
and they introduce a generalized error threshold in the space of the
population size and the
mutation probability delimiting a set of parameters where
the genetic algorithm proceeds efficiently.
In a genetic algorithm, the crossover operator complicates the dynamics
and either it shifts the critical points or it creates new ones.
This phenomenon has been observed independently
by Rogers, Pr\"ugel--Bennett and Jennings
\cite{RPJ} and by Nilsson Jacobi and Nordahl \cite{NN}.

A version of the quasispecies theory was recently worked out
in the context of a classical model of population genetics, namely
the Wright--Fisher model \cite{CE4,DA1}. 
The Wright--Fisher model
corresponds exactly to a genetic algorithm without crossover.
Let us explain briefly why quasispecies theory is relevant to
understand the dynamics of a genetic algorithm.
Typically, on a complicated landscape,
the evolution of the genetic algorithm proceeds by jumps.
The population stays for a long time exploring the space around 
the current best fit chromosome, until it discovers a better chromosome.
If this discovery time is very long, the process reaches a local
equilibrium and the distribution of the population looks like
a quasispecies.
Our goal here is to estimate the persistence time of this quasispecies,
i.e., the time the quasispecies stays alive until it is destroyed
by a catastrophe.
The persistence time depends in a complicated way on the structure of the
fitness landscape. The persistence time depends also on the population size.
If the population size is large, the genetic algorithm
will be able to keep an interesting quasipecies
alive for
a long time, long enough until a new advantageous mutant is discovered
and creates a new quasispecies.
We shall obtain a simple lower bound on the persistence time of the fittest
chromosomes by comparing the genetic algorithm with a family of simpler 
processes, which are amenable to rigorous mathematical analysis.

A very interesting conclusion of \cite{NP3} 
is the existence of a critical population size
below which it is practically impossible to reach the global optimum.
A similar conclusion was obtained in the simpler framework of 
the generalized
simulated annealing \cite{CE0}: 
within a specific asymptotic regime of low mutations
and high selection pressure, the convergence to the global maximum could
be guaranteed only above a critical population size. 
The approach presented here confirms this prediction. The genetic algorithm
is very unlikely to reach the global optimum
if the population size is too small.
If the population size is too large, the genetic algorithm will evolve
slowly and will require too many evaluations of the fitness function.
The optimal population size seems to depend strongly on the optimization
problem. In any case
a population size of order $c^*\ell\ln\ell$ 
should be enough to search the space of chromosomes of length~$\ell$
(here again $c^*$ is a constant
depending on $\pi$ only).

Technically, we study the genetic algorithm within the framework
of the theory of Markov chains. It has been noted numerous times
in the literature that a genetic algorithm is conveniently modelled as
a Markov chain. Several researchers have studied genetic algorithms
in this context, here is a selection of works belonging to this line
of research: 
\cite{BGK,BE,EAH,GS,LO,LGA,MALAWE,MIWA,NAKA,NIVO,SL1,LS,XZZ,ZN,SU}. 
Unfortunately, the transition matrix is very complicated
and the resulting formulas are intractable.
Our strategy is to consider a specific asymptotic regime.
Twenty years ago, in \cite{CE0}, an asymptotic regime corresponding to the
simulated annealing was studied. In this regime, the space $\zul$
and the population size were kept fixed, while the mutation probability
was sent to~$0$ and the selection strength to~$\infty$. 
It was then possible to analyze precisely the asymptotic dynamics
of the population.
Several other interesting results have been obtained in this regime,
in particular, the understanding of the asymptotic dynamics helped to
design potentially more efficient variants of the genetic algorithm
\cite{RITR,OF1,OF2,DEMI2}. 
Although this regime made it possible
to derive rigorous convergence
results, it turned out to be
irrelevant in practice, because it is certainly not
the  correct regime of parameters to run efficiently a genetic algorithm.
Another interesting approach 
based on the Feynman--Kac formula
was developed 
by Del Moral and Miclo
\cite{DEMI0,DEMI1,DEMI4}. 
Several other works have considered other asymptotic approximations 
on specific fitness landscapes \cite{BEBI,BE,DO,MAPI}.
Here we consider the asymptotic regime corresponding to the quasispecies theory
in a finite population, namely:
\medskip

\noindent
$\bullet$ The size $m$ of the population goes to $\infty$.
\medskip

\noindent
$\bullet$ The length $\ell$ of the chromosomes is large.
\medskip

\noindent
$\bullet$ The mutation probability is of order $1/\ell$.

\bigskip
\noindent
We are able to derive various estimates in this specific asymptotic
regime. We hope that these results 
will be relevant in practice. 
The proofs use various tools from the theory of Markov chains:
coupling techniques, Galton--Watson processes, 
large deviations estimates, Poisson approximation.
The study of the auxiliary chain rests on several ideas which were
developed in
the Freidlin--Wentzell theory and in the analysis of
the simulated annealing.

The main results are stated in the section~\ref{mainre}.
The simple genetic algorithm is briefly explained in the next section
and it is formally described 
in section~\ref{secmod}.
The remaining sections are devoted to the proofs.
In section~\ref{seccoupl}, we build a coupling for the genetic algorithms
starting with different populations.
In section~\ref{secdis}, we develop stochastic bounds to study
the disordered regime.
In section~\ref{secqua}, we build an auxiliary chain and we study its
dynamics. This auxiliary chain keeps track of the evolution
of the quasispecies within the genetic algorithm.
Section~\ref{finproof} contains the final proofs of the main theorems.
Several classical results from probability theory are gathered
in the appendix.

\section{The simple genetic algorithm}
We provide here a brief description of the simple genetic algorithm.
A more formal definition is given in section~\ref{secmod}.
The goal of the simple genetic algorithm is to find the global maxima
of a fitness function~$f$ defined on $\zul$ with values
in $\mathbb R$. The genetic algorithm starts with a population
of $m$ points of $\zul$, called the chromosomes, and it repeats
the following fundamental cycle in order to build the generation
$n+1$ from the generation~$n$:
\bigskip

\noindent
{\bf Repeat}
\medskip

\noindent
\qquad
$\bullet$ Select two chromosomes from the generation~$n$
\medskip

\noindent
\qquad
$\bullet$ Perform the crossover
\medskip

\noindent
\qquad
$\bullet$ Perform the mutation
\medskip

\noindent
\qquad
$\bullet$ Put the two resulting chromosomes in generation~$n+1$
\medskip

\noindent
{{\bf Until} there are $m$ chromosomes in generation~$n+1$}
\bigskip

\noindent
When building the generation~$n+1$ from the generation~$n$, 
all the random choices are performed independently. Therefore,
the above algorithmic description is equivalent to the parallel
version described in section~\ref{secmod}.
Let us describe more precisely the selection, 
crossover and mutation steps.
\bigskip

\noindent
{\bf Selection.} We use ranking selection, meaning that
the chromosomes are ordered according to their fitness, and they
are selected with the help of a probability distribution
which depends only on their rank. 
In case there is a tie between several chromosomes, we rank
them randomly (with the uniform distribution over 
all possible choices).
We consider mainly two popular 
selection mechanisms: linear ranking selection and
tournament selection. 
The linear ranking selection depends
on two parameters $\eta^-,\eta^+$ satisfying
$0\leq\eta^-\leq\eta^+$, $\eta^-+\eta^+=2$ and we have
$$P\Big(
\begin{matrix}
\text{selecting the $(m-i+1)$--th}\\
\text{best chromosome}
\end{matrix}
\Big)
\,=\,\frac{1}{m}\Big(\eta^-+(\eta^+-\eta^-)\frac{i-1}{m-1}\Big)\,.$$
The tournament selection depends
on an integer parameter $t$ satisfying $2\leq t\leq m$ and we have
$$P\Big(
\begin{matrix}
\text{selecting the $(m-i+1)$--th}\\
\text{best chromosome}
\end{matrix}
\Big)
\,=\,
\frac{1}{m^t}
\big(i^t-(i-1)^t\big)\,.
$$
We introduce 
a parameter~$\sigma$, called the selection drift.
For the linear ranking selection,
the selection drift~$\sigma$ is equal to $\eta^+$.
For the tournament selection,
the selection drift~$\sigma$ is equal to $t$.
\bigskip

\noindent
{\bf Crossover.} 
We use the standard single point crossover and the crossover probability
is denoted by~$p_C$:
\medskip

\noindent
$$P\Bigg(
\lower 7pt\hbox{
\vbox{
\hbox{$000\raise 3pt\hbox{\vrule height 0.4pt width 25pt}011$}
\hbox{$100\raise 3pt\hbox{\vrule height 0.4pt width 25pt}110$}
}
\hbox{\kern-2pt\vrule width 0.4pt height 25pt depth 10pt\kern-2pt}
\vbox{
\hbox{$011\raise 3pt\hbox{\vrule height 0.4pt width 25pt}001$}
\hbox{$001\raise 3pt\hbox{\vrule height 0.4pt width 25pt}111$}
}
\raise 7pt\hbox{$\longrightarrow$}\!\!
\hbox{
\vbox{
\hbox{$000\raise 3pt\hbox{\vrule height 0.4pt width 25pt}011$}
\hbox{$100\raise 3pt\hbox{\vrule height 0.4pt width 25pt}110$}
}
\hbox{\kern-2pt\vrule width 0.4pt height 25pt depth 10pt\kern-2pt}
\vbox{
\hbox{$001\raise 3pt\hbox{\vrule height 0.4pt width 25pt}111$}
\hbox{$011\raise 3pt\hbox{\vrule height 0.4pt width 25pt}001$}
}}}\Bigg)\,=\,
\frac{p_C}{\ell-1}\,.$$
\smallskip

\noindent
{\bf Mutation.} 
We use independent parallel mutation at each bit
and the mutation probability
is denoted by~$p_M$.
$$P\Big(0000000\longrightarrow 0101000\Big)\,=\,p_M^2(1-p_M)^5\,.$$

\section{Main results}
\label{mainre}
In this section, we state our main results in the form of six
theorems.
The first two theorems show that the dynamics of the genetic
algorithm depends in a critical way on the value of 
$$\pi \,=\,\sigma(1-p_C)(1-p_M)^\ell\,.$$
If $\pi<1$, the most fit chromosome is very likely to disappear before
$\kappa\ln m$ generations.
If $\pi>1$, the most fit chromosome has a positive probability
(independent of~$m$) to invade a positive fraction of the population.
Heuristically, the parameter~$\pi$ can be interpreted
as the mean
number of offsprings of the fittest chromosome from one generation
to the next. 
\medskip

\noindent
{\bf The disordered regime.}
We consider the fitness function $f$ defined by
$$\forall u\in\zul\qquad
f(u)\,=\,
\begin{cases}
2
&\text{if }u=1\cdots 1\\
1
&\text{otherwise }\\
\end{cases}
$$
This corresponds to the sharp peak landscape.
The chromosome $1\cdots 1$ is called the Master sequence.
We start the genetic algorithm from the population
$$x_0\,=\,
\left(
\begin{matrix}
1&\cdots&1\\
0&\cdots&0\\
\vdots& & \vdots\\
0&\cdots&0\\
\end{matrix}
\right)
$$
\begin{theorem}\label{thspl}
Let $\pi<1$ be fixed.
We suppose that the parameters 
are set so that
$\ell=m$ and
$$\pi
\,=\,\sigma(1-p_C)(1-p_M)^\ell\,.$$
There exist strictly
positive constants $\kappa,\beta,m_0$, which depend on $\pi$
only, such that, for the 
genetic algorithm
starting from $x_0$,
$$\forall m\geq m_0\qquad
P\left(
\begin{matrix}
\text{the Master sequence $1\cdots 1$}\\
\text{disappears from the population}\\
\text{before $\kappa\ln m$ generations}\\
\end{matrix}
\right)
\,\geq\,1-\frac{1}{m^\beta}\,.
$$
\end{theorem}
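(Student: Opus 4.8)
The plan is to track the number of copies of the Master sequence $1\cdots1$ across generations and show that in the regime $\pi<1$ it behaves like a subcritical branching process, so that it dies out within $\kappa\ln m$ generations with overwhelming probability. The key heuristic is exactly the one stated in the paper: $\pi=\sigma(1-p_C)(1-p_M)^\ell$ is the mean number of Master-sequence offspring produced per Master-sequence individual from one generation to the next. Since $\pi<1$, each Master sequence produces on average fewer than one copy, and the population of Master sequences should shrink geometrically until it hits zero.

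\medskip

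\noindent
\textbf{Key steps.}
First I would set up notation: let $Z_n$ denote the number of copies of $1\cdots1$ in generation~$n$, so $Z_0=1$ by the choice of~$x_0$. The core of the argument is to dominate the process $(Z_n)$ from above by a Galton--Watson process whose offspring mean is close to $\pi<1$. To do this I would analyze how a single Master sequence contributes to the next generation: it can be selected (the selection drift~$\sigma$ governs the expected number of times the fittest chromosome is chosen), it must survive crossover uncorrupted (factor $1-p_C$, since crossover with a non-Master parent destroys it), and it must survive mutation without any bit flip (factor $(1-p_M)^\ell$). Multiplying these gives expected contribution $\pi$. The subtlety is that the selection probabilities depend on the \emph{current} composition of the population, not just on $Z_n$; however, since the fitness landscape is the sharp peak, every Master sequence is tied for best and every other chromosome is tied for worst, so the per-individual selection weight of a Master sequence is maximized when $Z_n$ is small. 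This monotonicity should let me bound the offspring distribution by one with mean at most $\pi+o(1)$, uniformly over the relevant range, giving a stochastic domination $Z_n \preceq W_n$ for a subcritical Galton--Watson process $(W_n)$.

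\medskip

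\noindent
Once the domination is in place, the extinction estimate is routine branching-process theory: a subcritical Galton--Watson process with mean $\mu<1$ satisfies $P(W_n>0)\leq \mu^n$ (or $\E[W_n]=\mu^n$ together with Markov's inequality), so choosing $n=\kappa\ln m$ with $\kappa$ large enough relative to $1/\ln(1/\mu)$ forces $P(Z_{\kappa\ln m}>0)\leq \mu^{\kappa\ln m}=m^{-\beta}$ for a suitable $\beta>0$ depending only on $\pi$. This is where the constants $\kappa,\beta$ are pinned down, both as explicit functions of $\pi$.

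\medskip

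\noindent
\textbf{Main obstacle.}
I expect the hard part to be making the stochastic domination rigorous and \emph{uniform}. Two issues arise. First, the genetic algorithm builds generation $n+1$ by producing pairs of offspring from pairs of selected parents, so the contributions of distinct Master sequences to the next generation are not independent, and crossover couples the two offspring in a pair; I would need a coupling (presumably the one announced for section~\ref{seccoupl}) that decouples these well enough to compare with an honest branching process, or else work directly with a supermartingale bound on $\E[Z_{n+1}\mid \mathcal{F}_n]\leq \pi\, Z_n$ and control the fluctuations. Second, the mean offspring count equals $\pi$ only in the idealized single-Master regime; when $Z_n$ grows to a constant fraction of $m$ the selection weight per Master sequence drops, which only helps, but I must verify the bound does not degrade in the small-$Z_n$ regime where extinction actually happens. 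Confirming that the relevant per-step mean stays strictly below~$1$ by a margin depending only on~$\pi$ (so that the constraint $\ell=m$ and the $1/\ell$-scale mutation do not erode the gap) is the delicate point on which the exponent~$\beta$ ultimately rests.
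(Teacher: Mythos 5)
Your high-level strategy --- dominate the number of Master sequences by a subcritical Galton--Watson process and conclude via $P(W_n>0)\leq\mu^n$ with $n=\kappa\ln m$ --- is indeed the skeleton of the paper's argument (proposition~\ref{boundN} followed by the standard extinction bound). But there is a genuine gap: you treat a Master sequence in generation $n+1$ as necessarily a faithful copy of a Master sequence in generation $n$ (selected, uncrossed, unmutated), and this is false. A Master sequence can also be \emph{created}: by mutation from a chromosome that is close to $1\cdots 1$ in Hamming distance, or by a crossover whose cut point assembles an all-ones string from two partial ones. Consequently the supermartingale bound $E(Z_{n+1}\mid\mathcal{F}_n)\leq\pi Z_n$ that you propose does not hold without further control of the population's composition: if a constant fraction of the population sat at Hamming distance $1$ from the Master, a constant fraction of the $m$ selections would land on such chromosomes and each would be repaired by mutation with probability about $p_M(1-p_M)^{\ell-1}$, yielding on the order of $m\,p_M\sim 1$ new Masters per generation (since $\ell=m$ forces $p_M$ to be of order $1/m$); this is not $o(1)$ and would destroy subcriticality. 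Near-Master chromosomes are exactly what the descendants of the initial Master tend to be, so this scenario cannot be dismissed a priori.

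Closing this gap is what most of the paper's section on the disordered regime does, and it is also the reason the hypothesis $\ell=m$ appears in the statement. The paper tracks the genealogy of the initial Master sequence: the number $T_n$ of chromosomes whose ancestry contains it is dominated by a \emph{supercritical} Galton--Watson process with reproduction law $2\cP(4\sigma)$, which by a Markov-inequality argument stays below $m^{1/4}$ up to time $\kappa\ln m$ with probability $1-m^{-c_1}$ (propositions~\ref{bgwa} and~\ref{boundtau}); and it introduces a stopping time $\tau_2$ before which every chromosome \emph{outside} that progeny has fewer than $\sqrt{\ell}$ ones, controlled by the doubling recursion $D_{n+1}\leq 2D_n+\ell^{1/4}$ (lemma~\ref{bounddau}). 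Only on the event that both controls hold are the creation terms --- back-mutation from at most $m^{1/4}$ near-Master descendants, and crossover assembling a Master, which costs an extra factor $4/\sqrt{\ell}$ --- all negligible compared with $1/m$ per slot, and only then does the Master count admit the domination by a subcritical Galton--Watson process of mean $\pi(1+5\varepsilon)<1$. The difficulties you do flag (pair correlations induced by crossover, monotonicity of the selection weight in the Master count) are real but secondary and are handled routinely; the missing control of Master \emph{creation} is the step at which your argument, as written, would fail.
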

\noindent {\bf The quasispecies regime.}
We consider an arbitrary fitness function $f$.
We start the genetic algorithm from an arbitrary population~$x_0$.
Let $\lambda_0$ be the maximal fitness in $x_0$, i.e.,
$$\lambda_0\,=\,
\max_{1\leq i\leq m}f\big(x_0(i)\big)\,.
$$
\begin{theorem}\label{thqsr}
Let $\pi>1$ be fixed.
We suppose that the parameters 
are set so that
$$\pi
\,=\,\sigma(1-p_C)(1-p_M)^\ell\,.$$
There exist strictly
positive constants $V^*,p^*$, which depend on $\pi$
only, such that, 
for the 
genetic algorithm
starting from $x_0$,
for any $\ell,m\geq 1$,
$$
P\left(
\begin{matrix}
\text{until the generation $\exp(V^*m)$}\\
\text{the population always contains a chromosome}\\
\text{with fitness larger than or equal to 
$\lambda_0$
}\\
\end{matrix}
\right)
\,\geq\,p^*\,.
$$
\end{theorem}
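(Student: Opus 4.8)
The plan is to follow only the chromosomes whose fitness is at least $\lambda_0$ and to show that their number behaves, while it stays small, like a supercritical branching process, and, once it occupies a positive fraction of the population, like a metastable chain that needs an exponentially long time to die out. Let $N_n$ be the number of chromosomes of fitness $\geq\lambda_0$ present in generation~$n$, so that $N_0\geq 1$ by the definition of $\lambda_0$. A chromosome of fitness $\geq\lambda_0$ is ranked above every chromosome of fitness $<\lambda_0$, hence the $N_n$ fittest chromosomes always occupy the top $N_n$ ranks; this is the structural fact that makes ranking selection tractable here. The event to be estimated is exactly $\{\,N_n\geq 1\text{ for all }n\leq\exp(V^*m)\,\}$, i.e.\ the extinction time $T=\inf\{\,n:N_n=0\,\}$ of $(N_n)$ exceeds $\exp(V^*m)$. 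First I would use the coupling of section~\ref{seccoupl} and the auxiliary chain of section~\ref{secqua} to minorize $(N_n)$ stochastically by a chain whose transitions discard every beneficial effect (beneficial mutations, recombination of two fit parents): each fit chromosome is credited only with its faithful copies.

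The one-step estimate is the heart of the minorization. When $N_n=k$, the $k$ fit chromosomes occupy the top $k$ ranks, and for both ranking schemes the total selection weight of these ranks equals $1-(1-k/m)^\sigma$ to leading order, which encodes the selection drift~$\sigma$. Over one generation $m$ parents are drawn, so a given fit chromosome is chosen as a parent about $\frac{m}{k}\bigl(1-(1-k/m)^\sigma\bigr)$ times; a chosen parent is transmitted intact through crossover and mutation with probability at least $(1-p_C)(1-p_M)^\ell$. The expected number of fit offspring of a single fit chromosome is therefore of order $\frac{m}{k}\bigl(1-(1-k/m)^\sigma\bigr)(1-p_C)(1-p_M)^\ell$, which equals $\pi$ as $k/m\to 0$ and decreases as $k/m$ grows, crossing~$1$ at a quasispecies equilibrium fraction $\rho^*=\rho^*(\pi)>0$. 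This density dependence is exactly what produces a stable positive equilibrium when $\pi>1$.

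I would then run the argument in two stages. In the first stage I fix $\delta\in(0,\rho^*)$ small enough that the per-capita reproduction rate stays above some $\pi'>1$ on $\{\,k\leq\delta m\,\}$, and dominate $(N_n)$ from below, while it remains under $\delta m$, by a Galton--Watson process of offspring mean $\pi'$. By the classical survival theorem for supercritical branching processes (appendix), from $N_0\geq 1$ the chain reaches level $\delta m$ before hitting~$0$ with probability at least a constant $p^*=p^*(\pi)>0$ independent of~$m$. In the second stage, starting from a configuration with $\geq\delta m$ fit chromosomes, the chain has a strictly positive drift towards $\rho^*m$, so reaching~$0$ forces a downward excursion across $\Theta(m)$ levels against the drift; a large--deviation estimate for the auxiliary chain bounds the probability of such an excursion, over any polynomial number of generations, by $\exp(-V^*m)$ with $V^*=V^*(\pi)>0$, whence extinction before time $\exp(V^*m)$ has probability tending to~$0$. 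Splicing the two stages at the first hitting time of $\delta m$ via the strong Markov property gives $P(T>\exp(V^*m))\geq p^*\bigl(1-o(1)\bigr)$, which is the assertion after relabelling the constants.

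The main obstacle is the second stage: proving the large--deviation lower bound $V^*>0$ on the persistence time of the auxiliary chain, uniformly in $m$ and valid for an \emph{arbitrary} fitness function and \emph{arbitrary} initial population. The transition law of $N_n$ is genuinely density dependent and only approximately of branching type, so one must control the downward fluctuations of a non-linear finite-population chain near its stable equilibrium and show that the cost of a complete descent to~$0$ grows linearly in~$m$, with a rate depending on~$\pi$ alone. The minorization is a secondary but delicate point, because crossover couples the tracked chromosomes to the rest of the population and ties in the ranking must be resolved; discarding all beneficial events and keeping only the clean-copy lower bound is what keeps these couplings under control throughout.
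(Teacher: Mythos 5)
Your proposal is correct and follows essentially the same route as the paper: the auxiliary monotone chain that credits each fit chromosome only with its faithful copies (proposition~\ref{couplnl}), a first stage in which this chain grows supercritically from $1$ to $\delta m$ with a probability $p^*>0$ independent of $m$ (the paper packages this as geometric growth at the successive level-hitting times, propositions~\ref{crqa} and~\ref{sfj}, rather than an explicit Galton--Watson minorization, but the content is the same), and a second stage in which Freidlin--Wentzell-type large deviations around the stable fixed point $\rho^*$ of $\phi$ give persistence for $\exp(V^*m)$ generations with $V^*$ chosen strictly below the cost $V(\rho^*,0)$ (proposition~\ref{cghy} and corollaries~\ref{ghy}, \ref{faj}). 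The only slip is the phrase ``over any polynomial number of generations'': the union bound must be taken over $\exp(V^*m)$ generations, which works precisely because $V^*$ is taken strictly smaller than the large-deviation rate of a full descent.
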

{\bf The catastrophes.}
We consider next an arbitrary fitness function $f$
and we start the genetic algorithm from an arbitrary position.
For $\lambda\in\mathbb R$ and a population~$x$, we define $N(x,\lambda)$ as the
number of chromosomes in $x$ whose fitness is larger than or equal to~$\lambda$:
$$N(x,\lambda)\,=\,\card\,\{\,i\in\um:f(x(i))\geq\lambda\,\}\,.$$
For $i\in\um$ and $x\in\zul$, we define
$\Lambda(x,i)$ as the fitness of the $i$--th best chromosome in 
the population~$x$:
$$\Lambda(x,i)\,=\,\max\,\big\{\,\lambda
\in{\mathbb R}:N(x,\lambda)\geq i\,\big\}
\,.$$
We denote by $X_n$ the population of the genetic algorithm
after $n$~iterations.
\begin{theorem}\label{thcsr}
Let $\pi>1$ be fixed.
There exist strictly
positive constants $\rho^*,c^*, m_0$, which depend on $\pi$
only, such that:
for any fitness function~$f$, any set of parameters
$\ell,p_C,p_M$ satisfying
$\pi
\,=\,\sigma(1-p_C)(1-p_M)^\ell$,
for any $m\geq m_0$,
for the 
genetic algorithm
starting from an arbitrary population,
we have
$$\displaylines{
P\left(
\forall n\leq\exp(c^*m)\quad
\max_{1\leq i\leq m}f\big(X_n(i)\big)
\,\geq\,
\max_{0\leq s\leq n}\Lambda\big(X_s,\lfloor\rho^*m\rfloor\big)
\right)
\hfill\cr
\hfill
\,\geq\,1-\exp(-c^*m)\,.
}$$
\end{theorem}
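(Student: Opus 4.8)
The plan is to reduce the catastrophe estimate to a single--level persistence statement and then to patch the levels together with the strong Markov property. For a fitness level $\lambda$ write $N_n(\lambda)=N(X_n,\lambda)$ for the number of chromosomes of fitness at least $\lambda$ at generation $n$. The quantity $\lambda^*_n=\max_{0\le s\le n}\Lambda(X_s,\lfloor\rho^*m\rfloor)$ is nondecreasing, and a catastrophe before time $T=\exp(c^*m)$ is exactly the event that $\max_i f(X_n(i))<\lambda^*_n$ for some $n\le T$, i.e. that some level $\lambda$ once carried by at least $\lfloor\rho^*m\rfloor$ chromosomes is later carried by none. Now $\lambda^*_n$ increases only at ``record times'' $s$, and at such a time the population has at least $\lfloor\rho^*m\rfloor$ chromosomes above the new record level $\lambda$. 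Conditioning on $\cF_s$ and invoking the single--level lemma below, the probability that this level is ever emptied during $[s,T]$ is at most $\exp(-am)$ for a constant $a=a(\pi)>0$. Since there are at most $T$ record times, a union bound gives total failure probability at most $T\exp(-am)=\exp((c^*-a)m)$, which is below $\exp(-c^*m)$ once $c^*\le a/2$. This is the same upgrade from constant probability (as in Theorem~\ref{thqsr}) to overwhelming probability: having a positive fraction $\rho^*m$, rather than a single survivor, at the threshold is what turns a collapse into a large--deviation event.

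The single--level lemma I aim for reads: if $\pi>1$ there are $\rho^*,a>0$ and $m_0$, depending on $\pi$ only, so that for every fitness function, every level $\lambda$, every $m\ge m_0$ and every initial population with $N_0(\lambda)\ge\lfloor\rho^*m\rfloor$, one has $P(\exists n\le\exp(am):N_n(\lambda)=0)\le\exp(-am)$. To prove it I would collapse the landscape to two levels, fit ($f\ge\lambda$) versus unfit, which is legitimate because ranking selection sees only ranks and the fit chromosomes always occupy the top $N_n(\lambda)$ ranks; hence the law of the number of fit parents selected depends on $N_n(\lambda)$ alone. I would then dominate $N_n(\lambda)$ from below by the auxiliary chain of Section~\ref{secqua}, counting only \emph{guaranteed} fit offspring: a fit selected parent yields a fit child whenever its reproduction event performs no crossover (probability $1-p_C$) and no mutation (probability $(1-p_M)^\ell$). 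Since the selection drift of the top ranks contributes the factor $\sigma$, each fit chromosome has, when fit chromosomes are scarce, a mean of $\sigma(1-p_C)(1-p_M)^\ell=\pi>1$ guaranteed fit offspring; this is exactly the supercritical input.

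The core estimate is then a metastability / large--deviation bound for the auxiliary chain $Y_n$ lower bounding $N_n(\lambda)$. Its normalized drift $b(y)$, with $y=Y_n/m$, is strictly positive for $0<y<\rho_\pi$ and vanishes at a positive equilibrium $\rho_\pi=\rho_\pi(\pi)>0$ bounded away from $0$ uniformly in $\ell,m$ and in the landscape; I would fix $\rho^*=\rho_\pi/2$. Conditionally on $Y_n=k$, the next count $Y_{n+1}$ is a sum of weakly dependent fit--offspring contributions with conditional mean $k+m\,b(k/m)$, so a Chernoff/Hoeffding estimate shows that a net one--step decrease driving $y$ across a fixed barrier below $\rho_\pi$ has probability at most $\exp(-cm)$. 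A standard excursion count then yields that reaching $0$ within $\exp(am)$ steps, which forces the empirical fraction to descend from $\rho^*$ to $0$ against the drift, costs at most $\exp(-am)$ once $a$ is chosen small relative to $c$.

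The main obstacle is the lower--bounding coupling of the second paragraph, for two reasons. First, the simple genetic algorithm reproduces in pairs (two parents produce two children, repeated $m/2$ times), so the $m$ offspring are not independent and the fit--count increment is not a clean Galton--Watson sum; I must organize the ``no crossover, no mutation'' events so that the resulting fit--count process still stochastically dominates a tractable Markov chain, and does so uniformly over the unknown landscape. Second, the large--deviation rate $c$ and the equilibrium fraction $\rho_\pi$ must be controlled by $\pi$ alone, independently of $\ell$ and $m$; making the Hoeffding--type bound and the barrier estimate uniform in $\ell$, where $p_M\sim 1/\ell$ keeps $(1-p_M)^\ell$ of order $1/\sigma$, is the delicate quantitative point. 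Once these uniformities are secured, the reduction of the first paragraph and the excursion argument are routine.
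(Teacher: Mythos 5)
Your proposal follows essentially the same route as the paper: the monotone auxiliary chain that counts offspring selected from the top ranks and subjected to neither crossover nor mutation (the coupling of Proposition~\ref{couplnl}), the positive fixed point $\rho^*(\pi)$ of the induced one--dimensional dynamics, and a metastability estimate showing that extinction of this chain started from $\lfloor\rho^*m\rfloor$ within $\exp(c^*m)$ steps has probability $\exp(-cm)$ (Proposition~\ref{cghy}), followed by a union bound over times. The only cosmetic differences are that the paper sums over all deterministic pairs $s<t$ rather than over random record times (avoiding any appeal to the strong Markov property at the cost of a harmless extra factor $\exp(c^*m)$), and that it derives the one--step estimate from a full large--deviations upper bound for the transition probabilities rather than from a single Hoeffding barrier.
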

We point out that the sequence
$$
\max_{0\leq s\leq n}\Lambda\big(X_s,\lfloor\rho^*m\rfloor\big)
$$
is non--decreasing with respect to the time~$n$.
Thus, with very high probability, until time
$\exp(c^*m)$, the maximal fitness observed in the population
stays above a non--decreasing sequence.
We say that a catastrophe occurs at time~$n$ if
$$\max_{1\leq i\leq m}f\big(X_n(i)\big)
\,<\,
\max_{0\leq s\leq n}\Lambda\big(X_s,\lfloor\rho^*m\rfloor\big)\,.$$
This means that the maximal fitness in generation~$n$
has fallen below a fitness level which had been previously
reached by a fraction~$\rho^*$ of the chromosomes. In other words,
a quasispecies has been destroyed.

\noindent
{\bf Hitting time of optimal chromosomes.}
We denote by $H$ the Hamming distance between two chromosomes:
$$\forall u,v\in
\zul
\qquad
H(u,v)\,=\,\card\,\big\{\,j:
1\leq j\leq\ell,\,u(j)\neq v(j)\,\big\}\,.
$$
For $\lambda\in\mathbb{R}$, we define
$L(\lambda)$ as the set of the points in $\zul$ having a fitness
larger than or equal to~$\lambda$:
$$L(\lambda)\,=\,\big\{\,u\in\zul:f(u)\geq\lambda\,\big\}\,.$$
For $\lambda<\gamma$, we define
$\Delta(\lambda,\gamma)$ as the maximal distance between a point 
of $L(\lambda)$ and the set $L(\gamma)$, i.e.,
$$\Delta(\lambda,\gamma)\,=\,\max_{u\in L(\lambda)}
\,\min_{v\in L(\gamma)} H(u,v)\,.$$
Let $\tau^*$ be the 
hitting time of the set of the populations containing
optimal chromosomes, i.e.,
$$\displaylines{
\tau^*\,=\,\min\,\big\{\,
n\geq 1:\exists\, i\in\um\quad\hfill\cr
\hfill f(X_n(i))\,=\,
\max\,\big\{\,f(u):u\in\zul\,\big\}\,\big\}\,.}$$
We give next a theoretical upper 
bound on the expected value of~$\tau^*$.
\begin{theorem}\label{htopt}
Let $\pi>1$ be fixed.
There exist constants $c^*,\kappa^*,m_0$, which depend
only on $\pi$, such that:
for any set of parameters
$\ell,p_C,p_M,m$ satisfying
$$\pi
\,=\,\sigma(1-p_C)(1-p_M)^\ell\,,\quad
p_M\,\geq\,\frac{c^*}{\ell}\,,\quad
m\geq m_0\,,\quad
m\geq c^*\ell\ln\ell\,,
$$
for the genetic algorithm starting from an arbitrary 
population,
for any increasing sequence 
$\lambda_0<\cdots<\lambda_r$ 
such that
$$\lambda_0\,=\,\min\,\big\{\,f(u):u\in\zul\,\big\}\,,\quad
\lambda_r\,=\,\max\,\big\{\,f(u):u\in\zul\,\big\}\,,$$
we have
$$E(\tau^*)\,\leq\,
2+\kappa^* 
(\ln m)
m^2
\,
\sum_{k=0}^{r-1}
(p_M)^{-\Delta(\lambda_k,\lambda_{k+1})}\,.$$
\end{theorem}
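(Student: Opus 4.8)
The plan is to decompose the ascent to the global maximum into $r$ successive stages, stage~$k$ consisting in passing from a population having a fraction $\rho^*$ of its chromosomes at fitness level $\geq\lambda_k$ to one having a fraction $\rho^*$ at level $\geq\lambda_{k+1}$. Since $\lambda_0=\min f$, at time $0$ the whole population already sits at level $\geq\lambda_0$, so stage~$0$ may start immediately, while reaching level $\lambda_r=\max f$ is exactly the event that an optimal chromosome has been produced. Writing $T_k$ for the number of generations spent in stage~$k$, I would bound $E(\tau^*)\leq 2+\sum_{k=0}^{r-1}E(T_k)$ and estimate each $E(T_k)$ in turn.

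The backbone of each stage is the interplay of three ingredients. First, Theorem~\ref{thcsr} provides a good event, of probability at least $1-\exp(-c^*m)$, on which the maximal fitness never falls below the highest level ever attained by a fraction $\rho^*$ of the population, for all generations up to $\exp(c^*m)$. Hence, once stage~$k$ has begun, the current best chromosome stays in $L(\lambda_k)$ throughout the stage, and so, by the very definition of $\Delta(\lambda_k,\lambda_{k+1})$, lies within Hamming distance $\Delta(\lambda_k,\lambda_{k+1})$ of $L(\lambda_{k+1})$. Second, I would show that in each generation the probability of producing at least one chromosome of $L(\lambda_{k+1})$ is bounded below by $c\,(p_M)^{\Delta(\lambda_k,\lambda_{k+1})}$: selecting the current best chromosome (probability of order $\sigma/m$ per slot, hence of order $\sigma$ expected selections over the $m$ slots), performing no crossover (probability $1-p_C$) and flipping exactly the $\Delta(\lambda_k,\lambda_{k+1})$ bits that carry it into $L(\lambda_{k+1})$ (probability $(p_M)^{\Delta}(1-p_M)^{\ell-\Delta}$) produces an expected number of such offspring at least $\pi\,(p_M/(1-p_M))^{\Delta}\geq(p_M)^{\Delta}$; the hypotheses $p_M\geq c^*/\ell$ and $\pi>1$ guarantee that $(1-p_M)^{\ell}$ is bounded below, so $p_M$ is of order $1/\ell$ and the factor $(1-p_M)^{\ell-\Delta}$ does not degrade the estimate, after which a Poisson approximation turns this expected count into a genuine lower bound on the probability of at least one success. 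Third, once a chromosome of $L(\lambda_{k+1})$ has appeared, the quasispecies result of Theorem~\ref{thqsr} shows that with probability at least $p^*$ (independent of $m$) it survives and invades a fraction $\rho^*$ of the population, within a time of order $\ln m$, corresponding to geometric growth at rate $\pi>1$ from a single copy to $\lfloor\rho^*m\rfloor$ copies; on the complementary event the mutant is lost, but the best chromosome remains in $L(\lambda_k)$ and a fresh attempt begins.

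I would then package these ingredients as a renewal estimate. On the good event, successive discoveries of an $L(\lambda_{k+1})$--chromosome occur after geometric waiting times of mean at most $c^{-1}(p_M)^{-\Delta(\lambda_k,\lambda_{k+1})}$, each discovery triggers an invasion trial that succeeds with probability at least $p^*$ in time of order $\ln m$, and the number of trials is dominated by a geometric variable of mean $1/p^*$. This yields $E(T_k)\leq\kappa^*(\ln m)\,m^2\,(p_M)^{-\Delta(\lambda_k,\lambda_{k+1})}$, the conservative polynomial prefactors from the per--generation jump probability and from the invasion phase being absorbed into $\kappa^*$; summing over $k$ gives the announced bound. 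The crucial consistency check, and the place where the population--size hypothesis enters, is that this whole time budget is microscopic compared with the catastrophe time: since each $\Delta(\lambda_k,\lambda_{k+1})\leq\ell$, the total is at most of order $(\ln m)\,m^2\,r\,(p_M)^{-\ell}\leq\exp(O(\ell\ln\ell))$, which is $\ll\exp(c^*m)$ precisely because $m\geq c^*\ell\ln\ell$; hence with overwhelming probability the ascent is completed before any catastrophe, and the contribution of the bad event (of probability $\exp(-c^*m)$) to $E(\tau^*)$ is negligible after a restart argument.

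I expect the main obstacle to be the rigorous coordination of the discovery and invasion phases on the no--catastrophe event: one must ensure that a failed invasion returns the process to a state from which the discovery probability is again at least $c\,(p_M)^{\Delta}$ (so that the geometric retry structure is legitimate), and that the repeated invasion trials can be rendered independent enough, via the coupling of Section~\ref{seccoupl} and the strong Markov property, to sustain the $1/p^*$ bound on the number of trials. Controlling the interface between the exponentially reliable catastrophe bound and the merely constant invasion probability $p^*$, while keeping every waiting time safely below $\exp(c^*m)$, is the delicate part; the mutation estimate and the renewal bookkeeping are by comparison routine.
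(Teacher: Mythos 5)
Your proposal follows essentially the same route as the paper's proof: the paper decomposes $\tau^*$ over the levels $\lambda_k$ exactly as you do, and its key Lemma~\ref{laibd} packages your per-stage argument (a per-generation discovery probability of order $(p_M)^{\Delta(\lambda_k,\lambda_{k+1})}/m$, invasion to a fraction $\rho^*-\delta$ within $\kappa\ln m$ generations with probability $p_1$, and control of the fall below level $\lambda_k$) into a geometric-trials bound over windows of length $\kappa\ln m$, converted to an expectation by Lemma~\ref{ebound}, with the hypotheses $p_M\geq c^*/\ell$ and $m\geq c^*\ell\ln\ell$ entering precisely as in your final consistency check. The only cosmetic difference is that you cite Theorems~\ref{thqsr} and~\ref{thcsr} where the paper invokes the finer underlying Propositions~\ref{hfaj} and~\ref{cghy}, which are what actually deliver the $O(\ln m)$ invasion time and the per-window restart structure you correctly identify as the delicate point.
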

In the next result, we make a strong structural hypothesis on the fitness
landscape and 
we obtain a bound on $\tau^*$ 
which is polynomial in~$\ell$.
\begin{theorem}\label{ctopt}
Let $\gamma,\Delta\geq 1$.
Suppose that the fitness function is such that
there exists
an increasing sequence 
$$\lambda_0=\min_{\zul}f\,<\,\lambda_1\,<\,\cdots\,
<\,\lambda_{r-1}\,<\,
\lambda_r=\max_{\zul}f$$
with 
$r\leq\ell^\gamma$ and satisfying
$$\forall k\in\{\,0,\dots,r-1\,\}\qquad
\Delta(\lambda_k,\lambda_{k+1})\leq\Delta\,.$$
Let $\pi>1$ be fixed.
There exist positive constants $c^*,m_0$, which depend
only on $\pi$, such that:
for any set of parameters
$\ell,p_C,p_M,m$ satisfying
$$\pi
\,=\,\sigma(1-p_C)(1-p_M)^\ell\,,\quad
p_M\,\geq\,\frac{c^*}{\ell}\,,\quad
m\geq m_0\,,\quad
m\geq c^*\Delta\ln\ell\,,
$$
for the genetic algorithm starting from an arbitrary 
population,
$$E(\tau^*)\,\leq\,
2+(\ln m)\,
m^2\ell^{\gamma+\Delta}
\,.$$
\end{theorem}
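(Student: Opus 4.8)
The plan is to deduce Theorem~\ref{ctopt} from Theorem~\ref{htopt} by feeding in the structural hypothesis, the only genuine work being the relaxation of the population threshold from $c^*\ell\ln\ell$ down to $c^*\Delta\ln\ell$. First I would apply the estimate of Theorem~\ref{htopt} to the very sequence $\lambda_0<\cdots<\lambda_r$ supplied by the hypothesis, writing $\Delta_k=\Delta(\lambda_k,\lambda_{k+1})$, which gives
$$E(\tau^*)\,\leq\,2+\kappa^*(\ln m)\,m^2\sum_{k=0}^{r-1}(p_M)^{-\Delta_k}\,.$$
Then I would bound the sum termwise: since $p_M\leq 1$ and $\Delta_k\leq\Delta$ for every $k$, each summand is at most $(p_M)^{-\Delta}$; since $p_M\geq c^*/\ell$ we have $(p_M)^{-\Delta}\leq(\ell/c^*)^{\Delta}$; and since there are $r\leq\ell^{\gamma}$ terms,
$$\sum_{k=0}^{r-1}(p_M)^{-\Delta_k}\,\leq\,\ell^{\gamma}(\ell/c^*)^{\Delta}\,=\,(c^*)^{-\Delta}\,\ell^{\gamma+\Delta}\,.$$

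It then remains to absorb the prefactor $\kappa^*(c^*)^{-\Delta}$. Because $\Delta\geq 1$, enlarging the constant so that $c^*\geq\max(\kappa^*,1)$ yields $\kappa^*(c^*)^{-\Delta}\leq\kappa^*(c^*)^{-1}\leq 1$ uniformly in $\Delta$, which produces exactly the announced bound $2+(\ln m)\,m^2\ell^{\gamma+\Delta}$. Enlarging $c^*$ only strengthens the hypothesis $p_M\geq c^*/\ell$ and sharpens the bound $(\ell/c^*)^{\Delta}$, so no consistency is lost; note that the single constant $c^*$ is doing double duty here, both as the floor for $p_M$ and (below) in the population condition, and the above choice is compatible with both roles.

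The main obstacle is that this reduction is not quite legitimate as stated: Theorem~\ref{htopt} requires $m\geq c^*\ell\ln\ell$, whereas Theorem~\ref{ctopt} assumes only the weaker $m\geq c^*\Delta\ln\ell$, so when $\Delta<\ell$ I cannot invoke Theorem~\ref{htopt} as a black box and must instead re-enter its proof to check that the population requirement is \emph{local} to each level transition. Concretely, at level $\lambda_k$ a quasispecies is maintained, and by Theorem~\ref{thcsr} it persists for $\exp(c^*m)$ generations with high probability; discovering a point of $L(\lambda_{k+1})$ starting from $L(\lambda_k)$ means crossing a Hamming gap of size $\Delta_k\leq\Delta$, which costs of order $(p_M)^{-\Delta_k}$ generations. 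For discovery to beat the catastrophe it suffices that $\Delta_k\ln(1/p_M)\ll c^*m$, and since $p_M$ is of order $1/\ell$ this reads $\Delta_k\ln\ell\ll c^*m$, i.e. $m\geq c^*\Delta_k\ln\ell$; the worst case $\Delta_k\leq\Delta$ then gives the threshold $m\geq c^*\Delta\ln\ell$ in place of the crude $m\geq c^*\ell\ln\ell$ of Theorem~\ref{htopt} (which corresponds to the trivial bound $\Delta_k\leq\ell$). Making this survival-versus-discovery comparison quantitative at each of the $r\leq\ell^{\gamma}$ levels, uniformly over $k$, is the crux; once it is in hand, the summation and constant-absorption above close the argument.
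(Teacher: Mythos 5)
Your proposal is correct and follows essentially the same route as the paper: the paper proves Theorem~\ref{ctopt} by re-running the final assembly step of the proof of Theorem~\ref{htopt} with the improved per-level bounds $\Delta(\lambda_{k-1},\lambda_k)\leq\Delta$ and $r\leq\ell^{\gamma}$, and the ``survival-versus-discovery'' comparison you identify as the crux is precisely the content of the intermediate Lemma~\ref{laibd}, which is already stated free of the global condition $m\geq c^*\ell\ln\ell$ (that condition enters only when making the per-level failure probability $\kappa^*(\ln m)m^2(p_M)^{-\Delta_k}\exp(-mV^*)$ small, for which $m\geq c^*\Delta\ln\ell$ suffices once $\Delta_k\leq\Delta$). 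Your diagnosis that the black-box reduction fails and that the population requirement is local to each level transition is exactly right, and the remaining quantitative work you defer is the routine verification $(\ln m)m^2\ell^{\Delta}\leq\exp(2m/c^*)$ followed by the product over $\ell^{\gamma}$ levels and Lemma~\ref{ebound}.
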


\noindent
{\bf Asymptotic convergence.}
The bounds on the hitting time of optimal chromosomes yield
simple estimates for the invariant probability measure of
the genetic algorithm. Let us recall that the invariant probability
measure~$\mu$ is given by
$$\forall x,y\in\zulm\qquad
\mu(y)\,=\,\lim_{n\to\infty}
P\big(X_n=y\,\big|\,X_0=x\big)\,.$$
The invariant probability measure~$\mu$ depends on 
the fitness function~$f$ and the parameters $\ell,p_C,p_H,m$,
as well as the selection mechanism.
\begin{theorem}\label{invpt}
Let $\pi>1$ be fixed.
There exist positive constants $c^*,m_0$, which depend
only on $\pi$, such that:
for any set of parameters
$\ell,p_C,p_M,m$ satisfying
$$\pi
\,=\,\sigma(1-p_C)(1-p_M)^\ell\,,\quad
p_M\,\geq\,\frac{c^*}{\ell}\,,\quad
m\geq m_0\,,\quad
m\geq c^*\ell\ln\ell\,,
$$
for any fitness function
$f:\zul\to{\mathbb R}$,
the invariant probability measure~$\m$ of the simple genetic algorithm
satisfies
$$\mu\big(\big\{\,
x:
\max_{1\leq i\leq m}f\big(x(i)\big)
\,=\,
\max_{\zul}f
\,\big\}\big)\,\geq\,1-\exp(-c^*m)\,.
$$
\end{theorem}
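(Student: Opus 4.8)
The plan is to play off the two time scales produced by the previous theorems. On one hand, Theorem~\ref{htopt} shows that an optimal chromosome is \emph{discovered} quickly: its expected hitting time is only sub-exponential in $m$ under the present hypotheses. On the other hand, Theorem~\ref{thcsr} shows that once the top fitness level has been reached by a fraction $\rho^*$ of the population it \emph{persists} for a time of order $\exp(c^*m)$. Writing $f^*=\max_{\zul}f$ and
\begin{equation*}
G\,=\,\{\,x\in\zulm:\max_{1\leq i\leq m}f(x(i))=f^*\,\}\,,
\end{equation*}
the target is $\mu(G^c)\leq\exp(-c^*m)$. Since $p_M\in(0,1)$ the chain on $\zulm$ is irreducible and aperiodic, so $\mu$ is unique and the ergodic theorem gives, from any start $x$, $\mu(G^c)=\lim_{N\to\infty}\frac1N\sum_{n=0}^{N-1}\P_x(X_n\in G^c)$. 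It therefore suffices to bound the long-run fraction of time spent outside $G$.

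\textbf{Reaching and invading.} First I would control the time to reach the smaller set $A=\{\,x:N(x,f^*)\geq\lfloor\rho^*m\rfloor\,\}$ of populations in which a fraction $\rho^*$ of the chromosomes are optimal. Applying Theorem~\ref{htopt} to the increasing sequence of all distinct fitness values, and using $p_M\geq c^*/\ell$ together with $\Delta(\lambda_k,\lambda_{k+1})\leq\ell$, bounds $\E(\tau^*)$ by a polynomial in $m$ times $\exp(C\ell\ln\ell)$ for some $C=C(\pi)$; this produces a single optimal chromosome. The quasispecies mechanism behind Theorem~\ref{thqsr} then lets this chromosome invade a fraction $\rho^*$ of the population with probability at least $p^*$ within a bounded number of extra generations. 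Chaining a Markov-inequality estimate for $\tau^*$ with a geometric number of independent invasion attempts, I obtain a time $T$, at most a polynomial in $m$ times $\exp(C\ell\ln\ell)$, and a constant $q=q(\pi)>0$, such that $\P_x(\tau_A\leq T)\geq q$ from every population.

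\textbf{Persistence.} Once the chain lies in $A$, the $\lfloor\rho^*m\rfloor$-th best chromosome has fitness $f^*$, so the non-decreasing running maximum $\max_{0\leq s\leq n}\Lambda(X_s,\lfloor\rho^*m\rfloor)$ equals $f^*$ from that moment on. Theorem~\ref{thcsr} then guarantees, with probability at least $1-\exp(-c^*m)$, that $\max_{1\leq i\leq m}f(X_n(i))\geq f^*$, hence exactly $f^*$, for every $n\leq\exp(c^*m)$. Thus each successful entry into $A$ opens a sojourn in $G$ of length at least $\exp(c^*m)$, outside an event of probability $\exp(-c^*m)$.

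\textbf{Renewal bound, constants, and the hard point.} I would then cut the trajectory into alternating bad stretches, during which the chain tries to reach $A$, and good stretches of length $\exp(c^*m)$ begun at each entry into $A$. A bad stretch has expected length at most $T/q$, and a good stretch lies in $G$ except on an event of probability $\exp(-c^*m)$, so a renewal-ratio estimate yields
\begin{equation*}
\mu(G^c)\,\leq\,\frac{T/q}{T/q+\exp(c^*m)}+\exp(-c^*m)\,.
\end{equation*}
The conditions $m\geq c^*\ell\ln\ell$ and $p_M\geq c^*/\ell$ are precisely what separates the two scales: they force $C\ell\ln\ell\leq c^*m/2$ once $c^*$ is chosen large (depending only on $\pi$), and the remaining polynomial pre-factors are absorbed into a further exponential, so that $T/q\leq\exp(3c^*m/4)$ and the first term is at most $\exp(-c^*m/4)$; renaming constants gives the claim. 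The genuinely delicate step is the reaching-and-invading estimate: one must splice the expected-hitting-time bound of Theorem~\ref{htopt} to the macroscopic invasion of Theorem~\ref{thqsr} while keeping $T$ sub-exponential in $m$ and $q$ bounded away from $0$ uniformly in $\ell$ and $m$. Once these one-step inputs are secured, the renewal comparison is routine.
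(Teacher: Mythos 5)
Your two--scale strategy --- fast discovery of an optimal quasispecies followed by exponentially long persistence --- is exactly the paper's, and your discovery and persistence inputs correspond respectively to Lemma~\ref{laibd} (the engine behind Theorem~\ref{htopt}) and to the auxiliary--chain estimates behind Theorem~\ref{thcsr}. Where you genuinely diverge is the final accounting. The paper never takes an ergodic average: it invokes the Khas'minskii--Freidlin--Wentzell representation formula in the packaged form of Lemma~\ref{bdinv}, namely $\mu(G)\leq\sup_{x\in V}P_x(\tau_G<\tau_V)\cdot\sup_{y\in G}E_y(\tau_V)$, with $V$ the set of populations having at least $\lfloor(\rho^*-\delta)m\rfloor$ optimal chromosomes and $G$ the set of populations with none. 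The factor $\sup_{y\in G}E_y(\tau_V)$ is the hitting--time bound of Theorem~\ref{htopt} with $r=1$ and $\Delta=\ell$, and $\sup_{x\in V}P_x(\tau_G<\tau_V)$ is bounded by $2m^2\exp(-mV^*)$ by splitting on whether $\tau_G\leq m^2$ (Proposition~\ref{cghy}) or the auxiliary chain survives between $1$ and $(\rho^*-\delta)m$ for $m^2$ steps without re--entering $V$ (Proposition~\ref{hfaj}). Your renewal--ratio inequality is the heuristic shadow of this lemma, and as written it is the weak point: the ``good stretches'' of length $\exp(c^*m)$ are events about the future of the trajectory rather than blocks delimited by stopping times, the persistence guarantee is only probabilistic, and the displayed ratio silently treats dependent, random--length excursions as a deterministic alternation. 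Making it honest requires either an excursion decomposition anchored at returns to $V$ or, more simply, Lemma~\ref{bdinv} itself --- which is a two--line consequence of the representation formula $\mu(G)=\mu(V)^{-1}\int_V E_x\big(\sum_{k<\tau_V}1_G(X_k)\big)\,d\mu(x)$. Once you substitute that lemma for the renewal heuristic, the rest of your argument, including the role of $p_M\geq c^*/\ell$ and $m\geq c^*\ell\ln\ell$ in separating the two exponential scales, matches the paper's proof; you even get to discard the lower--bounded success probability $q$, since Lemma~\ref{laibd} delivers success with probability $1-o(1)$ rather than merely bounded away from zero.
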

This result is certainly less relevant in practice than the previous
ones, since it is extremely difficult to understand the speed of convergence
of the genetic algorithm towards its invariant measure, yet
it demonstrates that the
genetic algorithm is successful in this asymptotic regime.

\section{The model}\label{secmod}
Let $\ell\geq 1$ be an integer. We work on the space
$\zul$
of binary strings of length~$\ell$.
An element of
$\zul$ is called a chromosome.
Generic elements of $\zul$
will be denoted by the letters $u,v,w$.
Let $m\geq 1$ be an even integer.
A population is an $m$--tuple of elements of
$\zul$.
Generic populations will be denoted by the letters 
$x,y,z\index{$x,y,z$}$.
Thus a population $x$ is a vector
$$x\,=\,
\left(
\begin{matrix}
x(1)\\
\vdots\\
x(m)
\end{matrix}
\right)
$$
whose components are chromosomes.
For $i\in\{\,1,\dots,m\,\}$, we denote by
$$x(i,1),\dots,x(i,\ell)$$
the digits of the sequence $x(i)$. This way a population $x$
can be represented as an array
$$x\,=\,
\left(
\begin{matrix}
x(1,1)&\cdots&x(1,\ell)\\
\vdots& & \vdots\\
x(m,1)&\cdots&x(m,\ell)\\
\end{matrix}
\right)
$$
of size $m\times\ell$ of zeroes and ones, the
$i$--th row corresponding to the $i$--th chromosome
of the population.
Let
$f:\zul
\to {\mathbb R}$
be an arbitrary objective function, traditionally called the
fitness function.

Mathematically, a simple genetic algorithm is conveniently modelled
by a Markov chain $(X_n)_{n\in\mathbb N}$ with state space $\zulm$,
the space of the populations of $m$
chromosomes.
The transition mechanism of the simple genetic algorithm can be decomposed
into three steps: selection, crossover and mutation.
We explain separately each step.

\subsection{Selection}
We perform first the selection operation, which consists in
selecting with replacement $m$ chromosomes from the population.
To this end,
we build a selection distribution
$$\sel:
\zulm\times\um\to [0,1]\,.$$
The value $\sel(x,i)$ is the probability of selecting the $i$--th chromosome
in the population~$x$. We consider only ranking selection mechanisms, hence
the value 
$\sel(x,i)$ depends only on the ranking of the chromosomes of the population~$x$
according to their fitness.
We first define a ranking function, which gives the rank of a chromosome in a population.
Let $x\in\zulm$
be a population.
We choose a
permutation 
$\sigma$ 
of $\um$ such that
$$f(x(\sigma(1)))\,\leq\,\cdots
\,\leq\,
f(x(\sigma(m)))\,.$$
The choice of $\sigma$ is not unique in case of ties,
when several chromosomes have the same fitness.
It turns out that the way the permutation $\sigma$ is chosen affects considerably the
behavior of the genetic algorithm.
We choose the permutation $\sigma$ randomly, according to the uniform distribution
over the set of the permutations $\sigma$ which satisfy the above condition.
The choice of $\sigma$ is done 
independently from the other steps of the algorithm, and 
it is performed again at each selection step. 
In particular, each time the process returns to the population $x$, a new permutation
$\sigma$ is drawn independently, and the ordering of the chromosomes will change
accordingly.
We define the rank of the $i$--th chromosome of the population $x$ as
$$\rank(x,i)\,=\,\sigma^{-1}(i)\,.$$
Thus if 
$\rank(x,i)=m$, this implies that $x(i)$ has the largest
 fitness in the population $x$, but the converse is false: when
the fitness function $f$ is not one to one, a chromosome with maximal fitness
might get a ranking smaller than $m$.
Once the ranking function
$\rank(x,i)$ is built, we need a selection distribution $F_m$ on $\um$ to complete the 
definition 
of the selection distribution $\sel(x,i)$.
A selection distribution $F_m$ on $\um$ is simply a probability distribution on $\um$.
We define the selection distribution 
$\sel(x,i)$ by setting
$$\forall i\in\um\qquad
\sel(x,i)\,=\, F_m(\rank(x,i))\,.$$
Throughout the paper, we shall make the following 
hypothesis on~$F_m$.
\smallskip

\noindent
{\bf Hypothesis on $F_m$}.
\label{genhy}
There exists a repartition function $F$ on $[0,1]$ such that
$$\forall s\in[0,1]\qquad
\lim_{m\to\infty}
\sum_{i\leq sm}F_m(i)\,=\,F(s)\,.$$
We suppose that $F$ is continuous on $[0,1]$,
strictly increasing on $[0,1]$, convex on $]0,1[$ 
and left differentiable at~$1$.
The value of its left derivative at $1$ is called the selection drift
and is denoted by $\sigma$ (necessarily $\sigma\geq 1$).
We suppose that the discrete derivative of $F_m$ at $1$
converges to $\sigma$ in the following sense:
$$\displaylines{
\forall \varepsilon>0\quad\exists\,\delta>0\quad
\exists m_0\geq 1\quad
\forall m\geq m_0\quad
\forall i\in\big\{\,1,\dots,\lfloor\delta m\rfloor\,\big\}
\hfill\cr
\Big|F_m(m-i+1)+\cdots+F_m(m)
-\sigma\frac{i}{m}
\Big|
\,\leq\,\varepsilon\sigma\frac{i}{m}\,.}$$
We consider two popular choices for the selection distribution $F_m$.
\smallskip

\noindent
{\bf Linear ranking selection.}
This selection scheme depends on two parameters $\eta^-,\eta^+$ 
which satisfy
$$0\,\leq\,\eta^-\,\leq\,\eta^+\,,\qquad
\eta^-+\eta^+\,\,=2\,.$$
We define the linear ranking selection distribution by
$$\forall i\in\um\qquad
F_m(i)\,=\,\frac{1}{m}\Big(\eta^-+(\eta^+-\eta^-)\frac{i-1}{m-1}\Big)\,.$$
The linear ranking selection distribution satisfies the hypothesis with
$$F(s)\,=\,\eta^-s+\frac{1}{2}(\eta^+-\eta^-)s^2\,,
\qquad \sigma=\eta^+\,.$$
{\bf Tournament selection.}
This selection scheme depends on an integer parameter $t$ satisfying $2\leq t\leq m$.
We define the tournament selection distribution by
$$\forall i\in\um\qquad
F_m(i)\,=\,
\frac{1}{m^t}
\big(i^t-(i-1)^t\big)\,.
$$
The tournament selection distribution satisfies the hypothesis with
$$F(s)\,=\,s^t\,,
\qquad \sigma=t\,.$$
Finally,
we draw independently 
$m$ chromosomes from the population $x$ according to the 
selection distribution $\sel(x,\cdot)$ to obtain the population
after selection.
The stochastic matrix $P_S$ associated to the selection operator is defined
as follows. The probability to select the population $y$ starting from the
population $x$ is 
$$P_S(x,y)\,=\,
\prod_{i=1}^m
\Big(
\sum_{j:x(j)=y(i)}\sel(x,j)\Big)\,.$$

\subsection{Crossover}
After having selected $m$ chromosomes, we perform the crossover operation.
The crossover depends on a parameter $p_C\in[0,1]$ and it acts on pairs of chromosomes.
Let us explain how the crossover operator acts on two chromosomes $u,v$.
With probability $1-p_C$, there is no crossover and the chromosomes $u,v$
are not modified.
With probability $p_C$, there is a crossover between the chromosomes $u,v$.
We choose uniformly at random a cutting position~$k$ in $\{\,1,\dots,\ell-1\,\}$.
A new pair $(u',v')$ of chromosomes is formed, where $u'$ (respectively $v'$)
consists of the first $k$ digits of $u$ (respectively $v$)
and the last $\ell-k$ digits of $v$ (respectively $u$).
\smallskip

\noindent
\centerline{
\hbox{
\vbox{
\hbox{$u=000\raise 3pt\hbox{\vrule height 0.4pt width 25pt}011$}
\hbox{$v=100\raise 3pt\hbox{\vrule height 0.4pt width 25pt}110$}
}
\hbox{\kern-2pt\vrule width 0.4pt height 25pt depth 10pt\kern-2pt}
\vbox{
\hbox{$011\raise 3pt\hbox{\vrule height 0.4pt width 25pt}001$}
\hbox{$001\raise 3pt\hbox{\vrule height 0.4pt width 25pt}111$}
}
\raise 7pt\hbox{$\quad\longrightarrow\quad$}
\hbox{
\vbox{
\hbox{$u'=000\raise 3pt\hbox{\vrule height 0.4pt width 25pt}011$}
\hbox{$v'=100\raise 3pt\hbox{\vrule height 0.4pt width 25pt}110$}
}
\hbox{\kern-2pt\vrule width 0.4pt height 25pt depth 10pt\kern-2pt}
\vbox{
\hbox{$001\raise 3pt\hbox{\vrule height 0.4pt width 25pt}111$}
\hbox{$011\raise 3pt\hbox{\vrule height 0.4pt width 25pt}001$}
}}}}
\hbox{\hskip 52pt cutting position}\newline
\smallskip

\noindent
Mathematically, this mechanism is encoded in a crossover kernel
$$C:
\big(\zul\big)^2
\times
\big(\zul\big)^2
\to [0,1]\,.$$
The value $C\big((u,v),(u',v')\big)$ is the probability that,
by crossover, the pair of chromosomes $(u,v)$ becomes the pair
$(u',v')$. More precisely, 
let
$\varepsilon_1,\dots,\varepsilon_\ell$,
$\eta_1,\dots,\eta_\ell$ belong to $\{\,0,1\,\}$
and let
$k\in
\{\,1,\dots,\ell-1\,\}$. We define
$$\switch\big(k,
{\varepsilon_1\cdots\varepsilon_\ell},
{\eta_1\cdots\eta_\ell}
\big)=
{\varepsilon_1\cdots\varepsilon_k
\eta_{k+1}\cdots\eta_\ell}
\,.$$
The crossover kernel $C$ is then equal to
$$\displaylines{
C\Big(\Big(
\genfrac{}{}{0pt}{0}{u}{v}\Big),
\Big(\genfrac{}{}{0pt}{0}{u'}{v'}
\Big)
\Big)
\,=\,
(1-p_C)1_{(u,v)=(u',v')}
\hfill\cr
+\frac{p_C}{\ell-1}\card\Bigg\{\,k\in
\{\,1,\dots,\ell-1\,\}:
\genfrac{}{}{0pt}{0}{
\switch\big(k,u,v\big)=u'
}{
\switch\big(k,v,u\big)=v'
}
\,\Bigg\}\,.}$$
We apply simultaneously the crossover operator on the $m/2$ consecutive pairs
of chromosomes of a population of size $m$.
The stochastic matrix $P_C$ associated to the crossover operator is defined
as follows.
The probability to obtain the population $z$ after performing the
crossover starting from the population $y$ is
$$P_C(y,z)\,=\,
\prod_{i=1}^{m/2}
C\Big(\Big(
\genfrac{}{}{0pt}{0}{y(2i-1)}{y(2i)}\Big),
\Big(\genfrac{}{}{0pt}{0}{z(2i-1)}{z(2i)}
\Big)\Big)
\,.$$
\subsection{Mutation}
After having performed the crossover, we perform the mutation. 
The mutation depends
on one parameter, the mutation probability $p_M\in[0,1]$, and it
acts on a single chromosome.
Let $u=
\varepsilon_1\cdots\varepsilon_\ell$
be a chromosome.
For each $k\in\ul$, the digit $\varepsilon_k$ is kept unchanged with
probability $1-p_M$ and it mutates to $1-\varepsilon_k$
with probability $p_M$. These changes are done simultaneously and independently.
Mathematically, this mechanism is encoded in a mutation kernel
$$M:
\big(\zul\big)^2
\to [0,1]\,.$$
The value $M(u,v)$ is the probability that, by mutation, the chromosome $u$
becomes the chromosome $v$, and it is given by
$$M(u,v)\,=\, p_M^{H(u,v)}(1-p_M)^{\ell-H(u,v)}\,,$$
where $H(u,v)$ is the Hamming distance between $u$ and $v$, defined by
$$H(u,v)\,=\,\card\,\big\{\,j:
1\leq j\leq\ell,\,u(j)\neq v(j)\,\big\}\,.$$
The stochastic matrix $P_M$ associated to the mutation operator is defined
as follows.
The probability to obtain the population $x'$ after performing the
mutation starting from the population $z$ is
$$P_M(z,x')\,=\,
\prod_{i=1}^m
M(z_i,x'_i)\,.$$
\subsection{Transition matrix of the SGA}
The fundamental cycle of the simple genetic algorithm consists in 
applying successively the
selection, the crossover and the mutation operators on the population.
Mathematically, the simple genetic algorithm is conveniently modelled
by a Markov chain $(X_n)_{n\in\mathbb N}$ with state space $\zulm$,
the space of the populations of $m$
chromosomes.
The transition matrix $P_{SGA}$ of the simple genetic algorithm is defined by
$$\forall x,x'\in\zulm\qquad
P_{SGA}(x,x')\,=\,P\big(X_{n+1}=x'\,|\,X_n=x\,\big)\,.$$
The matrix $P_{SGA}$ is
simply the product 
of the three matrices $P_S,P_C,P_M$,
i.e.,
$P_{SGA}\,=\,P_SP_CP_M$, or
equivalently,
$$\forall x,x'\in\zulm\qquad
P_{SGA}(x,x')\,=\,\sum_{y,z}\,
P_S(x,y)P_C(y,z)P_M(z,x')\,.$$
\section{Coupling for the genetic algorithm}\label{seccoupl}
Throughout the proofs, we rely on various coupling arguments.
We will couple here the simple genetic algorithm starting from any possible
initial population.
We first define separately the maps for coupling the
selection, crossover and mutation.
\smallskip

\noindent
{\bf Selection.}
We define a selection
map
$$\cS:\zulm\times [0,1]\to\um
\index{$\cS_H$} $$
in order to
couple the selection mechanism starting with different populations.
We first build a map
$\cI:[0,1]\to\um$ which gives the rank of the chromosome to choose.
More precisely, for
$s\in[0,1[$, we set
$\cI(s)=i$ where $i$ is the unique index in $\um$ satisfying
$$
F_m(1)+\cdots
+F_m(i-1)
\,\leq\,s\,<\,
F_m(1)+\cdots
+F_m(i)
\,.$$
Let next $x\in\zulm$ and let $s\in[0,1[$. We define 
$\cS(x,s)=j$ where $j$ is the unique index in $\um$ such that
$\rank(x,j)=\cI(s)$.
The map $\cS$ is built in such a way that,
if $U$ is
a random variable 
with uniform law on 
the interval 
$[0,1]$, 
then,
for any population~$x$,
the law of 
$\cS(x,U)$
is given by 
the selection distribution:
$$\forall i\in\um\qquad
P\big(
\cS(x,
U)=i\big)\,=\,
\sel(x,i)\,.$$
%

\par\noindent
{\bf Crossover.}
We define a map 
$$\cC:\zulc\times \zu\times \ulmo\to\zul
\index{$\cMH$}$$
in order to
couple the crossover mechanism starting with different 
pairs of chromosomes. 
Let $u,v\in\zul$
and let $\varepsilon\in\zu$, $k\in\ulmo$. We define
$$\cC \big( u,v,\varepsilon,k \big)
\,=\,
\begin{cases}
\switch\big(k,
{u},{v}
\big)& \text{ if }\varepsilon=1\\
u
& \text{ if }\varepsilon=0
\end{cases}
$$
%
The map $\cC$ is built in such a way that,
if $V,W$ are two independent random variables
with respective laws
the
Bernoulli law with parameter $p_C$
and the
uniform law on $\ulmo$, 
then,
for any chromosomes $u,v$,
the law of  the pair of chromosomes
$\cC(u,v,V,W)$,
$\cC(v,u,V,W)$
is given by 
the crossover kernel $C$: 
$$\displaylines{
\forall u',v'\in\zul\qquad
P
\left(
\begin{matrix}
\cC(u,v,V,W)=u'\\
\cC(v,u,V,W)=v'\\
\end{matrix}
\right)
\,=\,
C\Big(\Big(
\genfrac{}{}{0pt}{0}{u}{v}\Big),
\Big(\genfrac{}{}{0pt}{0}{u'}{v'}
\Big)
\Big)
\,.
}$$

\par\noindent
{\bf Mutation.}
We define a map 
$$\cM:\zul\times \zul\to\zul
\index{$\cMH$}$$
in order to
couple the mutation mechanism starting with different 
chromosomes. 
Let $\varepsilon_1,\dots,\varepsilon_\ell\in\zu$
and let $u_1,\dots,u_\ell\in\zu$.
%
The map $\cM$ is defined by setting
$$\cM(
\varepsilon_1\cdots\varepsilon_\ell,
u_1,\cdots, u_\ell)=
\eta_1\cdots \eta_\ell
\,,$$
where
$$\forall i\in\ul\qquad
\eta_i\,=\,
\begin{cases}
\varepsilon_i &\text{if }u_i=0\\
1-\varepsilon_i &\text{if }u_i=1\\
\end{cases}
$$
%
The map $\cM$ is built in such a way that,
if $U^1,\dots,U^\ell$ are $\ell$ independent random variables
with law
the
Bernoulli law with parameter $p_M$,
then,
for any chromosome $u$,
the law of  the chromosome
$\cM(u,U^1,\dots,U^\ell)$
is given by the line
of the mutation matrix corresponding to~$u$:
$$
\forall v\in\zul\qquad
P\big(\cM(u,U^1,\dots,U^\ell)=v)\big)\,=\,M(u,v)\,.$$
{\bf Coupling for the genetic algorithm.}
We will now combine the maps $\cS,\cC,\cM$ with random inputs in 
order to couple the genetic algorithm with various initial conditions.
We will build
all the processes on a single large probability space.
We consider a probability space $(\Omega,{\mathcal F}, P)$
containing the following collection of independent random variables:
%
%
%
%
%
$$
\begin{array}{llll}
\text{$\bullet$ Uniform on the interval $[0,1]$:}&
S_{n}^{i}\,,&n\geq 1\,,&
1\leq i\leq m\,;\\
\text{$\bullet$ Bernoulli with parameter $p_M$:}&
U_{n}^{i,j}\,,&n\geq 1\,,& 1\leq i\leq m\,,1\leq j\leq\ell\,;\!\\
\text{$\bullet$ Bernoulli with parameter $p_C$:}&
V_{n}^{i}\,,& n\geq 1\,,&
1\leq i\leq m/2\,;\\
\text{$\bullet$ Uniform on $\ulmo$:}&
W_{n}^{i}\,,& n\geq 1\,,&
1\leq i\leq m/2\,.
\end{array}
$$
The variables having subscript~$n$ constitute 
the random input which is used to perform the
$n$--th step of the Markov chains.
For each $n\geq 1$, 
we build a map
$$\Phi_n:
\zulm
\to\zulm
\index{$\Psi_H$}
$$
in order to realize the coupling between the genetic algorithm
with various initial conditions. 
The coupling map $\Phi_n$ is defined by
$$\displaylines{
\forall x\in\zulm\qquad
\Phi_n(x)\,=\,
\hfill\cr
\left(
\begin{matrix}
\cM\big(
\cC\big(
\cS(x,S_n^1), 
\cS(x,S_n^2), 
V_n^1, W_n^1\big),
U_{n}^{1,1},\dots,U_{n}^{1,\ell}\big)
\\
\cM\big(
\cC\big(
\cS(x,S_n^2), 
\cS(x,S_n^1), 
V_n^1, W_n^1\big),
U_{n}^{2,1},\dots,U_{n}^{2,\ell}\big)
\\
\cM\big(
\cC\big(
\cS(x,S_n^3), 
\cS(x,S_n^4), 
V_n^2, W_n^2\big),
U_{n}^{3,1},\dots,U_{n}^{3,\ell}\big)
\\
\cM\big(
\cC\big(
\cS(x,S_n^4), 
\cS(x,S_n^3), 
V_n^2, W_n^2\big),
U_{n}^{4,1},\dots,U_{n}^{4,\ell}\big)
\\
\qquad\vdots\qquad\\
\cM\big(
\cC\big(
\cS(x,S_n^{m-1}), 
\cS(x,S_n^m), 
V_n^{m/2}, W_n^{m/2}\big),
U_{n}^{m-1,1},\dots,U_{n}^{m-1,\ell}\big)
\\
\cM\big(
\cC\big(
\cS(x,S_n^m), 
\cS(x,S_n^{m-1}), 
V_n^{m/2}, W_n^{m/2}\big),
U_{n}^{m,1},\dots,U_{n}^{m,\ell}\big)
\\
\end{matrix}
\right)
}$$
The coupling is then built in a standard way with the help of the 
sequence
$(\Phi_n)_{n\geq 1}$.
Let $x\in\zulm$ be the starting point of the process.
We build the process 
$(X_n)_{n\geq 0} 
\index{$D_n$}$
by setting
$X_0=x$ and
$$\forall n\geq 1\qquad
X_n\,=\,\Phi_n\big(X_{n-1}\big)\,.
$$
A routine check shows that the process
$(X_n)_{n\geq 0}$ 
is a Markov chain starting from $x$
with the adequate transition matrix. This way we have coupled
the genetic algorithm with all possible initial conditions.
\section{The disordered regime}\label{secdis}
We consider the fitness function $f$ defined by
$$\forall u\in\zul\qquad
f(u)\,=\,
\begin{cases}
2
&\text{if }u=1\cdots 1\\
1
&\text{otherwise }\\
\end{cases}
$$
This corresponds to the sharp peak landscape.
The chromosome $1\cdots 1$ is called the Master sequence.
We start the genetic algorithm with the population
$$x_0\,=\,
\left(
\begin{matrix}
1&\cdots&1\\
0&\cdots&0\\
\vdots& & \vdots\\
0&\cdots&0\\
\end{matrix}
\right)
$$
and we wish to estimate the probability of survival of the
Master sequence.
We denote by $(X_n)_{n\in\mathbb N}$ the genetic algorithm
starting from $x_0$.
We shall develop bounds in the sense of stochastic domination.
Let $\pi<1$ be fixed. Throughout the section, we suppose that
$\ell,p_C,p_M$ satisfy
$$\sigma(1-p_C)(1-p_M)^\ell\,=\,\pi\,.$$
\subsection{Genealogy}
To build a chromosome in the generation~$n$, we select two parents
in generation~$n-1$, and we apply the crossover and the
mutation operators.
Thus each chromosome has two parents in the previous generation.
With the coupling construction, the parents of the chromosomes
$X_n(2i-1),X_n(2i)$ are the chromosomes
$\cS(X_{n-1},S_n^{2i-1})$,
$\cS(X_{n-1},S_n^{2i})$.
The genealogy of a chromosome consists of all its ancestors until
time~$0$.
We define auxiliary random variables in order to control 
the progeny of the initial Master sequence.
For $n\geq 1$, $i\in\um$, we set
$M_n(i)=1$ if the Master sequence appears in the genealogy of $X_n(i)$
and $0$ otherwise. We denote by $M_n$ the vector 
$(M_n(1),\dots,M_n(m))$ and
we define also
$$
T_n\,=\,\sum_{i=1}^mM_n(i)\,.$$
The variable $T_n$ is the total number of descendants of the Master
sequence at time $n$.
Let also $N^*_n$ be the number of Master sequences present in the 
population at time~$n$:
$$\forall n\geq 1\qquad
N^*_n\,=\,\card\,\big\{\,i\in\um:X_n(i)=1\cdots 1\,\big\}\,.$$
We shall next compute stochastic bounds on 
$(T_n)_{n\in\mathbb N}$ and
$(N^*_n)_{n\in\mathbb N}$.
The process
$(T_n)_{n\in\mathbb N}$ will be controlled by a supercritical
branching process, while
the process
$(N^*_n)_{n\in\mathbb N}$ will be controlled by a subcritical
branching process.
\subsection{Bound on $T_n$}
By construction, the chromosomes $X_n(2i-1)$ and $X_n(2i)$ have the
same parents, thus $M_n(2i-1)=M_n(2i)$ and
$$\forall n\geq 1\qquad
T_n\,=\,\sum_{i=1}^{m/2} 2M_n(2i)
\,.$$
Conditionally on $X_{n-1},M_{n-1}$, the random variables
$M_n(2i)$, $1\leq i\leq m/2$, are independent and identically
distributed. Let us estimate their parameter:
$$\displaylines{
P\big(M_n(2)=0\,\big|\,X_{n-1},M_{n-1}\,\big)
\,=\,\hfill\cr
P\left(
\begin{matrix}
\text{the selection operator selects
two parents}\\
\text{in $X_{n-1}$ which do not belong to}\\
\text{the progeny of the initial
Master sequence}\\
\end{matrix}
\,\Bigg|\,
X_{n-1},M_{n-1}\,
\right)\,.
}
$$
The number of chromosomes in the progeny of the initial
Master sequence at time $n-1$ is $T_{n-1}$. The lowest value 
for the above conditional probability corresponds to the
situation where all these chromosomes are ranked best during
the selection process, therefore
$$P\big(M_n(2)=0\,\big|\,X_{n-1},M_{n-1}\,\big)
\,\geq\,\Big(F_m(1)+\cdots+F_m(m-T_{n-1})\Big)^2\,.$$
To go further, we need a bound on~$T_{n-1}$.
Thus we define
$$\tau_1\,=\,\inf\,\big\{\,n\geq 1: T_n>m^{1/4}\,\big\}\,$$
and we will study the random variable
$T_n1_{\{\,\tau_1\geq n\,\}}$.
In order to incorporate the event $\{\,\tau_1\geq n\,\}$ in
the previous 
inequality, we condition with respect to the whole history
of the process as follows:
$$\displaylines{
P\big(M_n(2)=1,\,
\tau_1\geq n\,\big|\,X_{n-1},M_{n-1},\dots,X_0,M_0\,\big)\hfill\cr
\,=\,
1_{\{\,T_0\leq m^{1/4},\dots
,T_{n-1}\leq m^{1/4}\,\}}
P\big(M_n(2)=1
\,\big|\,X_{n-1},M_{n-1},\dots,X_0,M_0\,\big)
\cr
\,=\,
1_{\{\,T_0\leq m^{1/4},\dots
,T_{n-1}\leq m^{1/4}\,\}}
P\big(M_n(2)=1
\,\big|\,X_{n-1},M_{n-1}\,\big)
\cr
\,\leq\,
1_{\{\,\tau_1\geq n-1\,\}}
\Bigg(
1-
\Big(F_m(1)+\cdots+F_m(m-T_{n-1})\Big)^2
\Bigg)\,.
}$$
Using the hypothesis 
on $F_m$, we obtain that, for $m$
large enough, there exists $\delta>0$ such that
$$\forall i\in\big\{\,1,\dots,\lfloor\delta m\rfloor\,\big\}
\qquad
F_m(m-i+1)+\cdots+F_m(m)
\,\leq\,2\sigma\frac{i}{m}\,.$$
For $m$ large enough, if 
$\tau_1\geq n$, then 
$T_{n-1}\leq m^{1/4}\leq\lfloor\delta m\rfloor$, whence
\begin{multline*}
P\big(M_n(2)=1,\,
\tau_1\geq n\,\big|\,X_{n-1},M_{n-1},\dots,X_0,M_0\,\big)
\cr
\,\leq\,
1_{\{\,\tau_1\geq n-1\,\}}
\Bigg(
1-\Big(1-2\sigma\frac{T_{n-1}}{m}\Big)^2\Bigg)
\,\leq\,
1_{\{\,\tau_1\geq n-1\,\}}
4\sigma\frac{T_{n-1}}{m}\,.
\end{multline*}
\begin{proposition}\label{bgwa}
Let 
$(Z_n)_{n\in\mathbb N}$
be a Galton--Watson process starting from $Z_0=1$
with reproduction law
$\nu=2\cP(4\sigma)$, i.e., the law~$\nu$ is twice the
Poisson law of parameter~$4\sigma$. We have
$$\forall n\geq 0\qquad
T_n1_{\{\,\tau_1\geq n\,\}}\,\preceq\,Z_n\,.$$
\end{proposition}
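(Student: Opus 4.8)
The plan is to prove the stochastic domination by induction on $n$, feeding the conditional estimate established just above the statement into a branching--process comparison. The point is that, conditionally on the past and on the event $\{\tau_1\geq n\}$, the new descendants $T_n$ are produced by a sum of conditionally independent Bernoulli variables, and this sum is dominated by a sum of independent copies of the offspring law $\nu=2\cP(4\sigma)$, one copy per current descendant. Since the offspring law does not depend on $n$, iterating this comparison reproduces exactly the Galton--Watson process $Z_n$.

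First I would record the per--step law of $T_n$. Because $M_n(2i-1)=M_n(2i)$, we have $T_n=2B_n$ with $B_n=\sum_{i=1}^{m/2}M_n(2i)$, and conditionally on $X_{n-1},M_{n-1}$ the variables $M_n(2i)$ are i.i.d. Bernoulli. On the event $\{\tau_1\geq n\}$, which is measurable with respect to the past (being determined by $T_0,\dots,T_{n-1}\leq m^{1/4}$), the already--derived inequality bounds their common parameter by $q\leq 4\sigma T_{n-1}/m$. Hence, conditionally on the past, $B_n$ is binomial$(m/2,q)$ on $\{\tau_1\geq n\}$.

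Next comes the core domination step. Each Bernoulli$(q)$ is stochastically dominated by a Poisson variable of parameter $-\ln(1-q)$, since the two variables have equal probability of being positive; summing $m/2$ independent copies gives $B_n\preceq$ Poisson$\big(-\tfrac{m}{2}\ln(1-q)\big)$. On $\{\tau_1\geq n\}$ we have $T_{n-1}\leq m^{1/4}$, so $q\leq 4\sigma m^{-3/4}$ is arbitrarily small for $m$ large, whence $-\ln(1-q)\leq 2q$ and $-\tfrac{m}{2}\ln(1-q)\leq mq\leq 4\sigma T_{n-1}$. By additivity of the Poisson law, Poisson$(4\sigma T_{n-1})$ equals in distribution a sum of $T_{n-1}$ independent Poisson$(4\sigma)$ variables. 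Multiplying by $2$, I conclude that, conditionally on the past and on $\{\tau_1\geq n\}$, $T_n=2B_n$ is stochastically dominated by $\sum_{j=1}^{T_{n-1}}\nu_j$ with $\nu_j$ i.i.d. of law $\nu=2\cP(4\sigma)$. This is precisely one reproduction step of the Galton--Watson process.

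Finally I would run the induction. Writing $\tilde T_n=T_n1_{\{\tau_1\geq n\}}$, the base case $\tilde T_0=1=Z_0$ holds. For the inductive step, on $\{\tau_1<n\}$ we have $\tilde T_n=0\leq Z_n$ trivially, while on $\{\tau_1\geq n\}$ the previous paragraph gives conditional domination of $\tilde T_n$ by a sum of $\tilde T_{n-1}$ i.i.d. copies of $\nu$; since $\tilde T_{n-1}\preceq Z_{n-1}$ by the induction hypothesis and a sum of fewer nonnegative i.i.d. offspring is dominated by a sum of more, a monotone (quantile) coupling propagates $\tilde T_n\preceq Z_n$. The main obstacle is exactly this last point: turning the family of conditional, event--restricted dominations into a single global stochastic domination. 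The clean way is to invoke the classical comparison lemma stating that step--by--step conditional domination, together with monotonicity of the branching map, forces unconditional domination of the whole chain; one must only check that the indicator $1_{\{\tau_1\geq n\}}$ is handled consistently, i.e. that restricting to $\{\tau_1\geq n\}$ can only decrease $\tilde T_n$ and never breaks the coupling built at earlier times.
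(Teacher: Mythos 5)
Your argument is correct and follows essentially the same route as the paper: the same conditional reduction of $T_n$ to $2\cB(m/2,q)$ with $q\leq 4\sigma T_{n-1}/m$ on $\{\tau_1\geq n\}$, the same binomial-to-Poisson domination via $-\ln(1-q)\leq 2q$ (your per-Bernoulli Poisson comparison plus additivity is precisely the proof of the paper's Lemma~\ref{binopoi}), and the same induction combining stepwise conditional domination with monotonicity in the number of offspring, which the paper merely writes out explicitly with non-decreasing test functions rather than invoking a comparison lemma.
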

\begin{proof}
We recall that $\preceq$ means stochastic domination 
(see appendix~\ref{apstoc}).
We will prove the inequality by induction on $n$. 
For $n=0$, the inequality holds trivially, since
$$T_01_{\{\,\tau_1\geq 0\,\}}\,=\,Z_0\,=\,1\,.$$
Let $n\geq 1$ and suppose 
that the inequality has been proved at rank $n-1$.
The previous computation shows that,
conditionally on
$X_0$,$M_0,\dots,$ $X_{n-1}$,$M_{n-1}$,
the law of 
$T_n1_{\{\,\tau_1\geq n\,\}}$ is stochastically dominated by
the law
$$2\cB\Big(\frac{m}{2},
\frac{4\sigma}{m}
T_{n-1}1_{\{\,\tau_1\geq n-1\,\}}\Big)\,.$$
There exists $t_0>0$ such that, for $0<t<t_0$, we have
$\ln(1-t)\geq -2t$. Therefore, for $m$ large enough so that 
$4\sigma m^{-3/4}<t_0$, we have
$$
\Big(1-
\frac{4\sigma}{m}
T_{n-1}1_{\{\,\tau_1\geq n-1\,\}}\Big)^{m/2}\,\geq\,
\exp\Big(
-
{4\sigma}
T_{n-1}1_{\{\,\tau_1\geq n-1\,\}}\Big)
\,.$$
By lemma~\ref{binopoi}, we conclude from this inequality that
$$2\cB\Big(\frac{m}{2},
\frac{4\sigma}{m}
T_{n-1}1_{\{\,\tau_1\geq n-1\,\}}\Big)\,\preceq\,
2\cP\big(4\sigma
T_{n-1}1_{\{\,\tau_1\geq n-1\,\}}\big)
\,.$$
Therefore, for any non--decreasing function~$\phi$, 
we have
$$\displaylines{
E\Big(\phi\big(
T_{n}1_{\{\,\tau_1\geq n\,\}}\big)
\,\big|\,X_{n-1},M_{n-1},\dots,X_0,M_0\,\Big)
\,\leq\,\hfill\cr
E\Big(\phi\Big(
\sum_{k=1}^{
T_{n-1}1_{\{\,\tau_1\geq n-1\,\} }
}
Y_k
\Big)
\,\big|\,
T_{n-1}1_{\{\,\tau_1\geq n-1\,\} }
\,\Big)
}$$
where the random variables
$(Y_k)_{k\geq 1}$ are independent identically distributed
with 
law
$\nu=2\cP\big(4\sigma\big)$, and they are 
also independent of the processes
$(X_n)_{n\in\mathbb N}$, 
$(M_n)_{n\in\mathbb N}$.
The conditional expectation
is a non--decreasing function of 
$T_{n-1}1_{\{\,\tau_1\geq n-1\,\}}$.
Taking the expectation with respect to
$$X_{n-1},M_{n-1},\dots,X_0,M_0$$ 
and using the induction
hypothesis, we get
$$\displaylines{
E\Big(\phi\big(
T_{n}1_{\{\,\tau_1\geq n\,\}}\big)
\Big)
\,\leq\,
E\Big(\phi\Big(
\sum_{k=1}^{
T_{n-1}1_{\{\,\tau_1\geq n-1\,\} }}
Y_k
\Big)
\Big)
\hfill\cr
\,\leq\,
E\Big(\phi\Big(
\sum_{k=1}^{Z_{n-1}}
Y_k
\Big)
\Big)
\,=\,
E\Big(\phi\big(Z_n\big)\Big)\,.
}$$
This completes the induction step.
\end{proof}

\noindent
In order to exploit proposition~\ref{bgwa}, we shall need a bound 
on~$\tau_1$, which we compute next.
\begin{proposition}\label{boundtau}
There exist $\kappa>0$, $c_1>0$, $m_1\geq 1$, such that
$$\forall m\geq m_1\qquad
P\big(\tau_1<\kappa\ln m\big)\,\leq\,\frac{1}{m^{c_1}}\,.$$
\end{proposition}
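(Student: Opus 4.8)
The plan is to combine the stochastic domination of Proposition~\ref{bgwa} with a crude first moment estimate, exploiting the fact that a branching process needs at least logarithmically many generations to build up a population of size $m^{1/4}$. The point is that the supercritical process $(Z_n)$ grows only geometrically, so if $\kappa$ is taken small enough, $Z_n$ is very unlikely to have reached $m^{1/4}$ before time $\kappa\ln m$.

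First I would recast the event in terms of the hitting time. Since $T_0=1\leq m^{1/4}$, the variable $\tau_1$ is a positive integer, so
$$\{\,\tau_1<\kappa\ln m\,\}\,\subseteq\,\bigcup_{n=1}^{\lfloor\kappa\ln m\rfloor}\{\,\tau_1=n\,\}\,.$$
The key observation is that on the event $\{\,\tau_1=n\,\}$ we have $\tau_1\geq n$, hence $1_{\{\,\tau_1\geq n\,\}}=1$ and $T_n>m^{1/4}$, which gives the inclusion
$$\{\,\tau_1=n\,\}\,\subseteq\,\big\{\,T_n1_{\{\,\tau_1\geq n\,\}}>m^{1/4}\,\big\}\,.$$
Because $x\mapsto 1_{\{\,x>m^{1/4}\,\}}$ is non--decreasing, Proposition~\ref{bgwa} yields $P\big(T_n1_{\{\,\tau_1\geq n\,\}}>m^{1/4}\big)\leq P\big(Z_n>m^{1/4}\big)$, where $(Z_n)_{n\in\mathbb N}$ is the Galton--Watson process with reproduction law $\nu=2\cP(4\sigma)$.

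Next I would control $P(Z_n>m^{1/4})$ by the first moment. The mean offspring number is $E(\nu)=8\sigma$, so that $E(Z_n)=(8\sigma)^n$, and Markov's inequality gives $P(Z_n>m^{1/4})\leq (8\sigma)^n m^{-1/4}$. Summing the geometric series over $1\leq n\leq\lfloor\kappa\ln m\rfloor$, and using $(8\sigma)^{\kappa\ln m}=m^{\kappa\ln(8\sigma)}$, I obtain a bound of the form
$$P\big(\tau_1<\kappa\ln m\big)\,\leq\,\frac{8\sigma}{8\sigma-1}\,m^{\kappa\ln(8\sigma)-1/4}\,,$$
the constant being finite and depending only on $\sigma$ thanks to the hypothesis $\sigma\geq 1$, which guarantees $8\sigma>1$ and $\ln(8\sigma)>0$.

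Finally I would fix $\kappa$ small, for instance so that $\kappa\ln(8\sigma)\leq 1/8$, which forces the exponent to be at most $-1/8$; then for $m$ larger than some $m_1$ the prefactor is absorbed and the desired bound holds with, say, $c_1=1/16$. There is no serious obstacle in this argument: the only delicate point is the transfer from $\tau_1$ to the dominated variable $T_n1_{\{\,\tau_1\geq n\,\}}$ via the inclusion of events above (the indicator is exactly what makes Proposition~\ref{bgwa} applicable), together with the verification that $\kappa$ may be chosen strictly positive, which is precisely what the supercriticality $8\sigma>1$ allows.
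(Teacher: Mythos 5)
Your proposal is correct and follows essentially the same route as the paper: the inclusion $\{\tau_1=n\}\subseteq\{T_n1_{\{\tau_1\geq n\}}>m^{1/4}\}$, stochastic domination by the Galton--Watson process of Proposition~\ref{bgwa}, Markov's inequality with $E(Z_n)=(8\sigma)^n$, a geometric sum, and the choice of $\kappa$ small enough that $\kappa\ln(8\sigma)$ stays below $1/4$. The paper's proof is the same argument written more tersely.
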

\begin{proof}
Let 
$(Z_n)_{n\in\mathbb N}$
be a Galton--Watson process as in
proposition~\ref{bgwa}.
We have, for $k\geq 0$,
\begin{multline*}
P(\tau_1=k)\,\leq\,
P\big(\tau_1\geq k,T_k>m^{1/4}\big)
\,=\,
P\big(
T_{k}1_{\{\,\tau_1\geq k\,\}}
>m^{1/4}\big)
\cr
\,\leq\,
P\big(Z_k>m^{1/4}\big)\,\leq\,
m^{-1/4}E\big(Z_k\big)\,\leq\,
m^{-1/4}(8\sigma)^k\,.
\end{multline*}
We sum this inequality: for $n\geq 1$,
$$P(\tau_1<n)\,\leq\,
m^{-1/4}\sum_{k=0}^{n-1}(8\sigma)^k\,=\,
m^{-1/4}\frac{(8\sigma)^n-1}{8\sigma-1}\,.$$
We choose $n=\kappa\ln m$, where $\kappa$ is
positive and sufficiently small, and we obtain the desired
conclusion.
\end{proof}
\subsection{Bound on $N^*_n$}
By definition, we have
$$\forall n\geq 1\qquad
N^*_n\,=\,\sum_{i=1}^{m/2}\Big(
1_{\{\,X_n(2i-1)=\un\,\}}+
1_{\{\,X_n(2i)=\un\,\}}\Big)
\,.$$
Let us define, for $1\leq i\leq m/2$,
$$B_n(i)\,=\,
1_{\{\,X_n(2i-1)=\un\,\}}+
1_{\{\,X_n(2i)=\un\,\}}\,.$$
Conditionally on $X_{n-1}$, the variables
$B_n(i)$, $1\leq i\leq m/2$, are independent and identically
distributed. 
A Master sequence appearing in generation~$n$ is either in
the progeny of the initial Master sequence, or it has been
created through numerous mutations and crossover
from $0\cdots 0$. The probability of the first scenario will
be controlled with the help of $T_{n-1}$ (the size of the progeny
of the initial Master sequence in generation~$n-1$).
The second scenario is very unlikely unless $n$ is large.
To control its probability, we introduce
the time~$\tau_2$, when a mutant, not belonging to the progeny
of the initial Master sequence, is at distance less than
$\ell-\sqrt{\ell}$ from the Master sequence.
Let us recall that $H(u,v)$ is the Hamming distance between the
chromosomes $u,v$.
We set
$$\displaylines{
\tau_2\,=\,\inf\,\big\{\,n\geq 1: \exists\,i\in\um\quad\hfill\cr
\hfill
H(X_n(i),\un)\leq \ell-\sqrt{\ell},\,M_n(i)=0\,\big\}\,.}$$
We recall that
$$\tau_1\,=\,\inf\,\big\{\,n\geq 1: T_n>m^{1/4}\,\big\}\,.$$
We set also
$$\tau_0\,=\,\inf\,\big\{\,n\geq 1: N^*_n=0\,\big\}\,.$$
We shall compute a bound on $N^*_n$ until time 
$\tau=\min(\tau_0,\tau_1,\tau_2)$.
\begin{proposition}\label{boundN}
Let $\pi<1$. We suppose that
$$\sigma(1-p_C)(1-p_M)^\ell\,=\,\pi\,.$$
We suppose in addition that $\ell=m$.
For $m$ large enough, the process
$$(N^*_n
1_{\{\,\tau\geq n\,\}}
)_{n\in\mathbb N}$$
is stochastically dominated by
a subcritical
Galton--Watson process.
\end{proposition}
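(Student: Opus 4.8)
The plan is to mimic the branching-process domination of Proposition~\ref{bgwa}, but now to produce a \emph{subcritical} reproduction law whose mean is governed by $\pi=\sigma(1-p_C)(1-p_M)^\ell<1$. Recall $N^*_n=\sum_{i=1}^{m/2}B_n(i)$ with $B_n(i)\in\{0,1,2\}$ conditionally i.i.d.\ given $X_{n-1}$. First I would condition on the whole history $X_{n-1},\dots,X_0$ and insert the indicator $1_{\{\tau\ge n\}}$ exactly as in Proposition~\ref{bgwa}, using $\{\tau\ge n\}\subseteq\{\tau\ge n-1\}$ and that on $\{\tau\ge n\}$ three facts hold at once: $N^*_{n-1}\ge1$ (from $\tau_0\ge n$), $T_{n-1}\le m^{1/4}$ (from $\tau_1\ge n$), and every chromosome of $X_{n-1}$ outside the progeny of the initial Master sequence has fewer than $\sqrt\ell$ ones (from $\tau_2\ge n$). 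In particular every Master sequence is a progeny chromosome, so $N^*_{n-1}\le T_{n-1}\le m^{1/4}$.

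The core estimate is an upper bound on $q:=P(X_n(k)=\un\mid X_{n-1})$ for a single offspring slot $k$. I would split the two selected parents into three classes: the $N^*_{n-1}$ Master sequences, the at most $T_{n-1}$ progeny non-Master chromosomes, and the non-progeny chromosomes. The dominant contribution is: select a Master parent, no crossover, mutation preserves all $\ell$ bits; using the sharp form of the hypothesis on $F_m$ (the selection weight of the top $N^*_{n-1}$ ranks is at most $(1+\ve)\sigma N^*_{n-1}/m$, valid since $N^*_{n-1}\le\delta m$) this term is at most $(1+\ve)\pi N^*_{n-1}/m$. Every remaining contribution must be shown to be $o(N^*_{n-1}/m)$: a non-progeny parent reaches $\un$ only through more than $\ell-\sqrt\ell$ mutations, costing $p_M^{\ell-\sqrt\ell}$; a progeny non-Master parent may sit close to $\un$, but there are at most $T_{n-1}\le m^{1/4}$ of them each of selection weight $O(1/m)$, total $O(m^{-3/4})$; and the crossover terms are controlled through the averaging $\frac1{\ell-1}\sum_k M(\switch(k,\cdot,\cdot),\un)$, the point being that when one parent is far from $\un$ (fewer than $\sqrt\ell$ ones), at most $O(\sqrt\ell)$ of the $\ell-1$ cut positions can leave the recombinant near $\un$, so this factor gains a $1/\sqrt\ell$. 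Since $\ell=m$, all corrections are negligible and $q\le\pi'N^*_{n-1}/m$ with $\pi'=(1+\ve)\pi+o(1)$; choosing $\ve$ so that $(1+\ve)\pi<1$ makes $\pi'<1$ for $m$ large.

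With the per-slot bound I would assemble the domination as in Proposition~\ref{bgwa}: conditionally $N^*_n1_{\{\tau\ge n\}}\preceq\sum_{i=1}^{m/2}\zeta_i$ with $\zeta_i$ i.i.d.\ on $\{0,1,2\}$, $P(\zeta_i\ge1)\le 2\pi'N^*_{n-1}/m$ and $P(\zeta_i=2)\le a_2$. A Bernoulli-to-Poisson comparison (Lemma~\ref{binopoi}, in compound/marked form) then dominates this sum by a \emph{compound} Poisson of intensity $\pi'N^*_{n-1}(1+o(1))$ with jumps in $\{1,2\}$; by Poisson thinning this equals $\sum_{j=1}^{N^*_{n-1}}\eta_j$ for i.i.d.\ $\eta_j$, which feeds the induction and yields a Galton--Watson process with reproduction law $\eta$ starting from $N^*_0=1$. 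Its mean is $\pi'(1+a_2\,m/(2\pi'N^*_{n-1}))=\pi'(1+o(1))<1$, hence subcritical.

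The main obstacle is twofold. The delicate quantitative part is the crossover/averaging estimate: one must show uniformly that recombining a near-$\un$ parent with a far parent rarely produces a near-$\un$ child, which is exactly where the $\tau_2$--threshold $\sqrt\ell$ and the condition $\ell=m$ enter. The conceptually subtle part is the double births $\{B_n(i)=2\}$: here $P(B_n(i)=2)$ is of order $(\sigma(1-p_M)^\ell)^2(N^*_{n-1}/m)^2=(\pi/(1-p_C))^2(N^*_{n-1}/m)^2$, so dominating each $B_n(i)$ by an \emph{independent} Poisson would force a parameter of order $\sqrt{a_2}\sim N^*_{n-1}/m$ and could destroy subcriticality when $p_C$ is large; routing the doubles through the \emph{compound} Poisson is essential, for there they enter only at their true mean rate $a_2$ and inflate the per-particle mean by a harmless factor $1+o(1)$.
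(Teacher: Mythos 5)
Your plan follows the paper's proof almost step for step: the same conditioning on the full history with the indicator $1_{\{\tau\ge n\}}$, the same classification of the selected parents (Master / progeny non--Master / non--progeny), the same dominant term $(1+\ve)\sigma N^*_{n-1}m^{-1}(1-p_C)(1-p_M)^\ell$, the same $O(1/\sqrt\ell)$ crossover--averaging bound for a near--$\un$ parent recombined with a far one, and the same binomial--to--Poisson--to--Galton--Watson assembly. The one place where you genuinely diverge is the treatment of the pairs with $B_n(i)=2$. You keep their probability at its ``true'' order $(\pi/(1-p_C))^2(N^*_{n-1}/m)^2$ and route them through a compound Poisson with jumps in $\{1,2\}$, arguing that a naive independent Poisson domination of each pair would inflate the mean by a factor of order $1/(1-p_C)$ and could kill subcriticality. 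The concern is legitimate, but the paper sidesteps it without any compound--Poisson machinery: on $\{\tau_1\ge n\}$ a double birth forces both selected parents to lie in the progeny of the initial Master sequence, whose size is at most $m^{1/4}$, so $P(B_n(i)=2,\,\tau\ge n\mid\cdots)\le(F_m(m)-F_m(m-T_{n-1}))^2=O(m^{-3/2})$ uniformly; the doubles then contribute an additive $2\cB(m/2,cm^{-3/2})\preceq2\cP(\ve)$, which is folded into the per--particle reproduction law using $N^*_{n-1}\ge1$ on $\{\tau_0\ge n\}$, at the cost of an extra $2\ve$ in the mean. Your route also works (the jump proportion is uniform because $N^*_{n-1}\le T_{n-1}\le m^{1/4}$), but it buys nothing here; the paper's is more elementary.

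One quantitative step in your sketch is stated too loosely to close. For the progeny non--Master parents you bound only their total selection weight, $O(m^{1/4}/m)=O(m^{-3/4})$, and then declare all corrections to be $o(N^*_{n-1}/m)$. But $m^{-3/4}\gg 1/m$, so when $N^*_{n-1}=1$ this term alone would swamp the main term. You must also charge the at--least--one mutation needed to turn a non--Master chromosome into $\un$, i.e.\ a factor $p_M\le-\frac1m\ln(\pi/\sigma)=O(1/m)$ (this is where $\ell=m$ enters again), which brings the contribution down to $O(m^{-7/4})=o(1/m)$. This is exactly the paper's term $p_M\big(F_m(m)-F_m(m-T_{n-1})\big)$; with it inserted, your estimate closes.
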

\begin{proof}
We shall estimate the law of $B_n(1)$. The proof is tedious
because there are several cases to consider.
The chromosomes $X_n(1),X_n(2)$ are obtained after 
applying the crossover and the mutation operators on
the chromosomes of the population $X_{n-1}$ having indices
$$I_1\,=\,\cS(X_{n-1},S_n^1)\,,\qquad
I_2\,=\,\cS(X_{n-1},S_n^2)\,.$$
We denote by $Y_1,Y_2$ the chromosomes obtained after crossover
from the chromosomes $X_{n-1}(I_1), X_{n-1}(I_2)$, i.e.,
\begin{align*}
Y_1\,&=\,
\cC\big(
X_{n-1}(I_1),
X_{n-1}(I_2),
V_n^{1}, W_n^{1}\big)\,,\cr
Y_2\,&=\,
\cC\big(
X_{n-1}(I_2),
X_{n-1}(I_1),
V_n^{1}, W_n^{1}\big)\,.
\end{align*}
We suppose also that
$\tau_2\geq n$.
Let $\lambda>0$ be such that 
$\pi/\sigma\geq \exp(-\lambda)$.
We have then
$$(1-p_M)^\ell\,=\,\frac{\pi}{\sigma(1-p_C)}\,\geq\,
\frac{\pi}{\sigma}\,\geq\,
\exp(-\lambda)\,.$$
Notice that $\lambda$ depends only on $\pi/\sigma$, and not on $\ell$ or $p_M$.
By lemma~\ref{binopoi}, the binomial law $\cB(\ell,p_M)$ is then
stochastically dominated by the Poisson law $\cP(\lambda)$. 
We will use repeatedly the bound given in lemma~\ref{poisstail}: 
$$\forall t\geq\lambda\qquad
P\big(U_{n}^{1,1}+\dots+U_{n}^{1,\ell}\geq t\big)\,\leq\,
\Big(\frac{\lambda e}{t}\Big)^t\,.$$
When using this bound, the value of~$t$ will be a function of~$\ell$.
We will always take $\ell$ large enough, so that the
value of $t$ will
be larger than $\lambda$.
We examine several cases, depending on whether the initial Master sequence
belongs to the genealogy of the chromosomes
$X_{n-1}(I_1)$,
$X_{n-1}(I_2)$:
\medskip

\noindent
$\bullet\quad  M_{n-1}(I_1)=M_{n-1}(I_2)=0$.
Since $\tau_2\geq n$, then the chromosomes
$X_{n-1}(I_1)$ and 
$X_{n-1}(I_2)$ have strictly less than $\sqrt{\ell}$ ones,
therefore the chromosomes $Y_1,Y_2$ obtained after crossover
have strictly less than $2\sqrt{\ell}$ ones.
Thus,
for $\ell$ large enough and $*=1$ or $*=2$,
\begin{multline*}
P\Big(
\cM\big(Y_*, U_{n}^{1,1},\dots,U_{n}^{1,\ell}\big)=\un
\Big)\cr
\,\leq\,
P\big(U_{n}^{*,1}+\dots+U_{n}^{*,\ell}>\ell-2\sqrt{\ell}\big)\,\leq\,
\Big(\frac{\lambda e}{\ell-2\sqrt{\ell}}\Big)^{\ell-2\sqrt{\ell}}\,.
\end{multline*}
Therefore
$$
P\left(
\begin{matrix}
B_n(1)>0,\,\tau_2\geq n\\
M_n(I_1)=M_n(I_2)=0
\end{matrix}
\,\Big|\,
\begin{matrix}
X_{n-1}\\
M_{n-1}
\end{matrix}
\right)
\,\leq\,
2\Big(\frac{\lambda e}{\ell-2\sqrt{\ell}}\Big)^{\ell-2\sqrt{\ell}}\,.
$$
\medskip

\noindent
$\bullet\quad  M_{n-1}(I_1)=1$, $M_{n-1}(I_2)=0$.
We estimate first the probability that $B_n(1)=2$.
Since $\tau_2\geq n$, then the chromosome
$X_{n-1}(I_2)$ has strictly less than $\sqrt{\ell}$ ones,
therefore at least one of the chromosomes $Y_1,Y_2$ obtained after crossover
has strictly less than $(\ell+\sqrt{\ell})/2$ ones.
Suppose for instance that it is the case for $Y_1$.
It is then very unlikely that a Master sequence
is created from $Y_1$ with the help of the mutations.
Indeed,
for $\ell$ large enough,
we have 
\begin{multline*}
P\Big(
\cM\big(Y_1, U_{n}^{1,1},\dots,U_{n}^{1,\ell}\big)=\un
,\,
H(Y_1,\un)>
(\ell-\sqrt{\ell})/2
\Big)\cr
\,\leq\,
P\big(U_{n}^{1,1}+\dots+U_{n}^{1,\ell}>
(\ell-\sqrt{\ell})/2 \big)\,\leq\,
\Big(\frac{2\lambda e}{\ell-\sqrt{\ell}}\Big)^{(\ell-\sqrt{\ell})/2}\,.
\end{multline*}
Therefore
$$\displaylines{
P\left(
\begin{matrix}
B_n(1)=2,\,\tau_2\geq n\\
M_n(I_1)=1,M_n(I_2)=0
\end{matrix}
\,\Big|\,
\begin{matrix}
X_{n-1}\\
M_{n-1}
\end{matrix}
\right)
\,\leq\,
2\Big(\frac{2\lambda e}{\ell-\sqrt{\ell}}\Big)^{(\ell-\sqrt{\ell})/2}\,.
}$$
We estimate next the probability than $B_n(1)=1$.
Suppose that a crossover occurs between
$X_{n-1}(I_1), X_{n-1}(I_2)$, i.e.,
that $V_n^1=1$.
Since $\tau_2\geq n$, then the chromosome
$X_{n-1}(I_2)$ has strictly less than $\sqrt{\ell}$ ones.
After crossover,
the probability that one of the two resulting chromosomes $Y_1,Y_2$
has at least $\ell-\sqrt{\ell}$ ones is less than $4/\sqrt{\ell}$.
Indeed, this can happen only if, either on the left of
the cutting site, or on its right, there are at most $\sqrt{\ell}$ zeroes. 
The most favorable
situation is when all the ones in~$Y_2$ are at the end or at the
beginning of~$Y_2$, in which case we have 
$2\sqrt{\ell}$ cutting sites which lead to the desired result.
Now, if a chromosome $u$ is such that
$H(u,\un)>  \sqrt{\ell}$, then
\begin{multline*}
P\Big(
\cM\big(u, U_{n}^{1,1},\dots,U_{n}^{1,\ell}\big)=\un
\Big)\cr
\,\leq\,
P\big(U_{n}^{1,1}+\dots+U_{n}^{1,\ell}>
\sqrt{\ell} \big)\,\leq\,
\Big(\frac{\lambda e}{\sqrt{\ell}}\Big)^{\sqrt{\ell}}\,.
\end{multline*}
From the previous discussion, we conclude that
$$\displaylines{
P\left(
\begin{matrix}
B_n(1)=1,\,V_n^1=1,\,\tau_2\geq n\\
M_n(I_1)=1,M_n(I_2)=0
\end{matrix}
\,\Big|\,
\begin{matrix}
X_{n-1}\\
M_{n-1}
\end{matrix}
\right)
\,\leq\,\hfill\cr\hfill
\Big(
\frac{4}{\sqrt{\ell}}\,+\,
2\Big(\frac{\lambda e}{\sqrt{\ell}}\Big)^{\sqrt{\ell}}\Big)
\Big(F_m(m)-F_m(m-T_{n-1})\Big)\,.
}$$
We consider now the case where 
no crossover occurs between
the chromosomes
$X_{n-1}(I_1), X_{n-1}(I_2)$, i.e.,
we have
$$V_n^1=0\,,\quad
Y_1=X_{n-1}(I_1)\,,\quad Y_2=X_{n-1}(I_2)\,.$$
We write
$$\displaylines{
P\big(B_n(1)=1,\,M_n(I_1)=1, M_n(I_2)=0, V_n^1=0
\,,\tau_2\geq n
\,\big|\,X_{n-1},\,M_{n-1}\big)
\,\leq\,
\hfill\cr
P\left(
\begin{matrix}
\cM\big(X_{n-1}(I_1), U_{n}^{1,1},\dots,U_{n}^{1,\ell}\big)=\un\cr
M_n(I_1)=1 
,\,V_n^1=0
,\,\tau_2\geq n\cr
\end{matrix}
\,\,\Bigg|\,\,
\begin{matrix}
X_{n-1}\cr
M_{n-1}
\end{matrix}
\right)
\hfill
\cr
\hfill
\,+\,
P\left(
\begin{matrix}
\cM\big(X_{n-1}(I_2), U_{n}^{1,1},\dots,U_{n}^{1,\ell}\big)=\un\cr
M_n(I_2)=0 
,\,V_n^1=0
,\,\tau_2\geq n\cr
\end{matrix}
\,\,\Bigg|\,\,
\begin{matrix}
X_{n-1}\cr
M_{n-1}
\end{matrix}
\right)
\,.
}$$
We estimate the last term.
Since $\tau_2\geq n$, then the chromosome
$Y_2=X_{n-1}(I_2)$ has strictly less than $\sqrt{\ell}$ ones.
As before, 
for $\ell$ large enough,
we have 
\begin{multline*}
P\left(
\begin{matrix}
\cM\big(X_{n-1}(I_2), U_{n}^{1,1},\dots,U_{n}^{1,\ell}\big)=\un\cr
M_n(I_2)=0 
,\,V_n^1=0
,\,\tau_2\geq n
\end{matrix}
\,\,\Bigg|\,\,
\begin{matrix}
X_{n-1}\cr
M_{n-1}
\end{matrix}
\right)
\cr
\,\leq\,
P\big(U_{n}^{2,1}+\dots+U_{n}^{2,\ell}>
\ell-\sqrt{\ell} \big)\,\leq\,
\Big(\frac{\lambda e}{\ell-\sqrt{\ell}}\Big)^{\ell-\sqrt{\ell}}\,.
\end{multline*}
Thus it is very unlikely that a Master sequence
is created from 
$Y_2=X_{n-1}(I_2)$ 
with the help of the mutations.
The most likely scenario is that the Master sequence comes from 
$Y_1= X_{n-1}(I_1)$. We estimate the probability of this scenario, 
and to do so,
we distinguish further two cases, according to whether $X_n(I_1)$
is a Master sequence or not:
$$\displaylines{
P\left(
\begin{matrix}
\cM\big(X_{n-1}(I_1), U_{n}^{1,1},\dots,U_{n}^{1,\ell}\big)=\un\cr
M_n(I_1)=1 
,\,V_n^1=0
,\,\tau_2\geq n
\end{matrix}
\,\,\Bigg|\,\,
\begin{matrix}
X_{n-1}\cr
M_{n-1}
\end{matrix}
\right)\,=\,\hfill\cr
P\left(
\begin{matrix}
\cM\big(X_{n-1}(I_1), U_{n}^{1,1},\dots,U_{n}^{1,\ell}\big)=\un\cr
X_{n-1}(I_1)=\un,\,
M_n(I_1)=1 
,\,V_n^1=0
,\,\tau_2\geq n
\end{matrix}
\,\,\Bigg|\,\,
\begin{matrix}
X_{n-1}\cr
M_{n-1}
\end{matrix}
\right)\cr
\,+\,
P\left(
\begin{matrix}
\cM\big(X_{n-1}(I_1), U_{n}^{1,1},\dots,U_{n}^{1,\ell}\big)=\un\cr
X_{n-1}(I_1)\neq\un,\,
M_n(I_1)=1 
,\,V_n^1=0
,\,\tau_2\geq n
\end{matrix}
\,\,\Bigg|\,\,
\begin{matrix}
X_{n-1}\cr
M_{n-1}
\end{matrix}
\right)\,.
}$$
To estimate these probabilities, we make an intermediate conditioning and
we obtain
$$\displaylines{
P\left(
\begin{matrix}
\cM\big(X_{n-1}(I_1), U_{n}^{1,1},\dots,U_{n}^{1,\ell}\big)=\un\cr
X_{n-1}(I_1)=\un,\,
M_n(I_1)=1 
,\,V_n^1=0
,\,\tau_2\geq n
\end{matrix}
\,\,\Bigg|\,\,
\begin{matrix}
X_{n-1}\cr
M_{n-1}
\end{matrix}
\right)\,\leq\,\hfill\cr
P\Big(
\cM\big(\un, U_{n}^{1,1},\dots,U_{n}^{1,\ell}\big)=\un\Big)
\hfill\cr
\hfill\times
P\left(
\begin{matrix}
X_{n-1}(I_1)=\un\cr
M_n(I_1)=1 
,\,V_n^1=0
,\,\tau_2\geq n
\end{matrix}
\,\,\Bigg|\,\,
\begin{matrix}
X_{n-1}\cr
M_{n-1}
\end{matrix}
\right)\cr
\,\leq\,
(1-p_M)^\ell
(1-p_C)
\Big(F_m(m)-F_m(m-N_{n-1}^*)\Big)
\,.
}
$$
Indeed, the number of Master sequences present in $X_{n-1}$
is $N_{n-1}^*$ and
the probability of selecting a Master sequence in $X_{n-1}$
is at most
$(F_m(m)-F_m(m-N_{n-1}^*))$.
In a similar way,
$$\displaylines{
P\left(
\begin{matrix}
\cM\big(X_{n-1}(I_1), U_{n}^{1,1},\dots,U_{n}^{1,\ell}\big)=\un\cr
X_{n-1}(I_1)\neq\un,\,
M_n(I_1)=1 
,\,V_n^1=0
,\,\tau_2\geq n
\end{matrix}
\,\,\Bigg|\,\,
\begin{matrix}
X_{n-1}\cr
M_{n-1}
\end{matrix}
\right)\,\leq\,\hfill\cr
\sum_{u\neq\un}
P\Big(
\cM\big(u, U_{n}^{1,1},\dots,U_{n}^{1,\ell}\big)=\un\Big)
P\left(
\begin{matrix}
X_{n-1}(I_1)=u\cr
M_n(I_1)=1 
,\,\tau_2\geq n
\end{matrix}
\,\,\Bigg|\,\,
\begin{matrix}
X_{n-1}\cr
M_{n-1}
\end{matrix}
\right)\cr
\,\leq\,
p_M
P\left(
\begin{matrix}
X_{n-1}(I_1)\neq \un\cr
M_n(I_1)=1 
,\,\tau_2\geq n
\end{matrix}
\,\,\Bigg|\,\,
\begin{matrix}
X_{n-1}\cr
M_{n-1}
\end{matrix}
\right)
\,\leq\,
p_M
\Big(F_m(m)-F_m(m-T_{n-1})\Big)
\,.  }
$$
Putting together the previous inequalities, we obtain
$$\displaylines{
P\big(B_n(1)=1,\,M_n(I_1)=1, M_n(I_2)=0,\,\tau_2\geq n
\,\big|\,X_{n-1},\,M_{n-1}\big)
\hfill\cr
\,\leq\,
\Big(
\frac{4}{\sqrt{\ell}}\,+\,
2\Big(\frac{\lambda e}{\sqrt{\ell}}\Big)^{\sqrt{\ell}}
\Big)
\Big(F_m(m)-F_m(m-T_{n-1})\Big)
\,+\,
\Big(\frac{\lambda e}{\ell-\sqrt{\ell}}\Big)^{\ell-\sqrt{\ell}}
\,+\,\hfill
\cr
(1-p_M)^\ell
(1-p_C)
\Big(F_m(m)-F_m(m-N_{n-1}^*)\Big)
\,+\,
p_M
\Big(F_m(m)-F_m(m-T_{n-1})\Big)
\,.
}$$

\noindent
$\bullet\quad  M_{n-1}(I_1)=M_{n-1}(I_2)=1$.
In this case, we have
$$\displaylines{
P\big(M_n(I_1)=1, M_n(I_2)=1\,,\tau_2\geq n
\,\big|\,X_{n-1},\,M_{n-1}\big)
\hfill\cr
\hfill
\,\leq\,
\Big(F_m(m)-F_m(m-T_{n-1})\Big)^2
\,.
}$$
In conclusion, we obtain the following inequalities:
$$\displaylines{
P\big(B_n(1)=2\,,\tau_2\geq n
\,\big|\,X_{n-1},\,M_{n-1}\big)
\,\leq\,
\hfill\cr
\hfill
2\Big(\frac{\lambda e}{\ell-2\sqrt{\ell}}\Big)^{\ell-2\sqrt{\ell}}\,+\,
4\Big(\frac{2\lambda e}{\ell-\sqrt{\ell}}\Big)^{(\ell-\sqrt{\ell})/2}
\,+\,
\Big(F_m(m)-F_m(m-T_{n-1})\Big)^2
\,,\cr
P\big(B_n(1)=1\,,\tau_2\geq n
\,\big|\,X_{n-1},\,M_{n-1}\big)
\,\leq\,\hfill\cr
2\Big(\frac{\lambda e}{\ell-2\sqrt{\ell}}\Big)^{\ell-2\sqrt{\ell}}\,+\,
\Big(
\frac{8}{\sqrt{\ell}}\,+\,
4\Big(\frac{\lambda e}{\sqrt{\ell}}\Big)^{\sqrt{\ell}}\Big)
\Big(F_m(m)-F_m(m-T_{n-1})\Big)
\hfill\cr
\,+\,
2\Big(\frac{\lambda e}{\ell-\sqrt{\ell}}\Big)^{\ell-\sqrt{\ell}}
\,+\,
2(1-p_M)^\ell
(1-p_C)
\Big(F_m(m)-F_m(m-N_{n-1}^*)\Big)
\cr
\hfill
\,+\,
2p_M
\Big(F_m(m)-F_m(m-T_{n-1})\Big)
\,+\,
\Big(F_m(m)-F_m(m-T_{n-1})\Big)^2
\,.
}$$
In order to incorporate the event $\{\,\tau\geq n\,\}$ in
these inequalities, we condition with respect to the whole history
of the process as follows: for $*=1$ or $*=2$,
$$\displaylines{
P\big(B_n(1)=*,\,
\tau\geq n\,\big|\,X_{n-1},M_{n-1},\dots,X_0,M_0\,\big)\hfill\cr
\,\leq\,
1_{\{\,\tau\geq n\,\}}
P\big(B_n(1)=*
\,\big|\,X_{n-1},M_{n-1}\,\big)\,.
}$$
Let $\varepsilon>0$ be such that
$$(1+5\varepsilon)\sigma(1-p_C)(1-p_M)^\ell\,=\,
(1+5\varepsilon)
\pi\,<\,1\,.$$
We use next the hypothesis on the selection function:
there exists $c>0$ such that,
for $m$ large enough, 
$$1_{\{\,\tau\geq n\,\}}
\Big(F_m(m)-F_m(m-T_{n-1})\Big)
\,\leq\,
1_{\{\,\tau\geq n\,\}}
\frac{c }{m}
T_{n-1}
\,\leq\,
\frac{c }{m^{3/4}}
\,.$$
Moreover, 
for $m$ large enough, 
$$1_{\{\,\tau\geq n\,\}}
\Big(F_m(m)-F_m(m-N_{n-1}^*)\Big)
\,\leq\,
1_{\{\,\tau\geq n\,\}}
\frac{ \sigma(1+\varepsilon) }{m}
N_{n-1}^*
\,.$$
Thus there exists a constant $c>0$ such that,
for $m,\ell$ large enough,
$$\displaylines{
1_{\{\,\tau\geq n\,\}}
P\big(B_n(1)=2
\,\big|\,X_{n-1},\,M_{n-1}\big)
\,\leq\,
\frac{1 }{\ell^2}
+
\frac{c^2 }{m^{3/2}}
\,,\hfill\cr
1_{\{\,\tau\geq n\,\}}
P\big(B_n(1)=1
\,\big|\,X_{n-1},\,M_{n-1}\big)
\,\leq\,
\frac{1 }{\ell^2}
+
\frac{8 }{\sqrt\ell}
\frac{c }{m^{3/4}}
\hfill\cr
+
\frac{2}
{m}
 \sigma(1+\varepsilon) 
1_{\{\,\tau\geq n\,\}}
N_{n-1}^*
(1-p_M)^\ell
(1-p_C)
\,+\,
2p_M
\frac{c }{m^{3/4}}
\,+\,
\frac{c^2 }{m^{3/2}}
\,.
}$$
We rewrite the previous inequalities in the case $\ell=m$.
First, we have, for a positive constant $c$,
$$P\big(B_n(1)=2\,,\tau\geq n
\,\big|\,X_{n-1},M_{n-1},\dots,X_0,M_0\,\big)
\,\leq\,
\frac{c}{m^{3/2}}
\,.$$
Moreover
$\sigma(1-p_M)^m\,\geq\,\pi$,
whence
$$p_M\,\leq\,-\frac{1}{m}\ln(\pi/\sigma)\,.$$
For $m$ large enough, we have therefore
$$\frac{1 }{m^2}
+
\frac{8 }{\sqrt m}
\frac{c }{m^{3/4}}
\,+\,
2p_M
\frac{c }{m^{3/4}}
\,+\,
\frac{c^2 }{m^{3/2}}
\,\leq\,
\frac{2} {m} \pi\varepsilon \,,
$$
and it follows that
$$\displaylines{
P\big(B_n(1)=1\,,\tau\geq n
\,\big|\,X_{n-1},M_{n-1},\dots,X_0,M_0\,\big)
\,\leq\,
\hfill\cr
\hfill\frac{2}
{m}
 \pi(1+2\varepsilon) 
1_{\{\,\tau\geq n\,\}}
N_{n-1}^*
\,.
}$$
Coming back to the initial equality for~$N_n^*$, 
we conclude that, for $m$ large enough,
the law of
$N^*_{n}
1_{\{\,\tau\geq n\,\}}$ is stochastically dominated by the sum of two
independent binomial random variables as follows:
$$
N^*_{n}
1_{\{\,\tau\geq n\,\}}\,\preceq\,
\cB\Big(
\frac{m}{2},
\frac{2
}{m}
 \pi(1+2\varepsilon) 
1_{\{\,\tau\geq n\,\}}
N^*_{n-1}
\Big)
+
2\cB\Big(
\frac{m}{2},
\frac{c}{m^{3/2}}
\Big)\,.
$$
For $m$ large, these two binomial laws
are in turn stochastically
dominated by two Poisson laws.
More precisely, 
for $m$ large enough,
$$\displaylines{
\Big(1-
\frac{2}{m}
 \pi(1+2\varepsilon) 
1_{\{\,\tau\geq n\,\}}
N^*_{n-1}
\Big)^{m/2}\geq
\exp\Big(-
 \pi(1+3\varepsilon) 
1_{\{\,\tau\geq n\,\}}
N^*_{n-1}\Big)
\,,\cr
\big(1-
{c}{m^{-3/2}}
\big)^{m/2}\geq\exp(-\varepsilon)\,.}$$
Lemma~\ref{binopoi} yields then that
$$
N^*_{n}
1_{\{\,\tau\geq n\,\}}\,\preceq\,
\cP\big(
 \pi(1+3\varepsilon) 
N^*_{n-1}
1_{\{\,\tau\geq n\,\}}
\big)
+
2\cP(
\varepsilon
)\,.
$$
The point is that we have got rid of the variable $m$ in the upper bound, so we are
now in position to compare 
$N^*_{n}
1_{\{\,\tau\geq n\,\}}$
with a Galton--Watson process.
Let 
$(Y'_n)_{n\geq 1}$
be a sequence of i.i.d. random variables with law
$\cP(
 \pi(1+3\varepsilon) 
)$, 
let 
$(Y''_n)_{n\geq 1}$
be a sequence of i.i.d. random variables with law
$\cP(\varepsilon)$, both sequences being independent.
The previous stochastic inequality can be rewritten as
$$
N^*_{n}
1_{\{\,\tau\geq n\,\}}\,\preceq\,
\Big(
\sum_{k\geq 1}^{
N^*_{n-1}
1_{\{\,\tau\geq n\,\}}}
Y'_k
\Big)
+2Y''_1
\,.
$$
This implies further that
$$
N^*_{n}
1_{\{\,\tau\geq n\,\}}\,\preceq\,
\sum_{k\geq 1}^{
N^*_{n-1}
1_{\{\,\tau\geq n-1\,\}}}
\big(
Y'_k+2Y''_k
\big)\,.
\qquad(\star)
$$
Let $\nu^*$ be the law of 
$Y'_1+2Y''_1$  and let
$(Z^*_n)_{n\geq 0}$ be a Galton--Watson process starting from $Z_0=1$
with reproduction law~$\nu^*$. We prove finally that,
for $m$ large enough, 
$$\forall n\geq 0\qquad
N^*_{n}
1_{\{\,\tau\geq n\,\}}\,\preceq\,
Z^*_n\,.$$
We suppose that $m$ is large enough so that the stochastic 
inequality~$(\star)$ holds and we proceed by induction on~$n$.
For $n=0$, we have
$$
N^*_{0}
1_{\{\,\tau\geq 0\,\}}\,=\,1\,\leq\,
Z^*_0\,=\,1\,.$$
Let $n\geq 1$ and 
suppose that the inequality holds at rank $n-1$.
Inequality~$(\star)$ yields
$$
N^*_{n}
1_{\{\,\tau\geq n\,\}}\,\preceq\,
\sum_{k\geq 1}^{
N^*_{n-1}
1_{\{\,\tau\geq n-1\,\}}}
\kern-7pt
\big(
Y'_k+2Y''_k
\big)
\,\preceq\,
\sum_{k\geq 1}^{
Z^*_{n-1}
}
\big(
Y'_k+2Y''_k
\big)
\,=\,
Z^*_{n}
\,.
$$
Thus the inequality holds at rank~$n$ and the induction is
completed.
Moreover we have
$$E(\nu^*)\,=\,E
\big(
Y'_1+2Y''_1
\big)\,=\,
 \pi(1+5\varepsilon) \,<\,1\,.
$$
Thus the 
Galton--Watson process 
$(Z^*_n)_{n\geq 0}$ is subcritical and
this completes the proof of
proposition~\ref{boundN}.
\end{proof}

\noindent
We close this section with a bound on~$\tau_2$, which will be useful
when applying proposition~\ref{boundN}.
\begin{lemma}\label{bounddau}
For $m\geq 2$ and for $\ell$ large enough, we have
$$P\Big(\tau_2\leq
\frac{1}
{5}
{\ln\ell}
\Big)\,\leq\,
1-\exp(-m\exp\big(-\ell^{1/4}\big)\Big)\,.
$$
\end{lemma}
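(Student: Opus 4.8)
The plan is to track the largest number of ones carried by a chromosome outside the progeny of the initial Master sequence, and to show that this quantity cannot climb up to $\sqrt\ell$ within $\frac15\ln\ell$ generations unless an atypically large mutation burst occurs somewhere. For $n\geq 0$ I set
$$R_n\,=\,\max\,\big\{\,\ell-H(X_n(i),\un):1\leq i\leq m,\ M_n(i)=0\,\big\}$$
(the largest number of ones among the non--Master chromosomes, with the convention that this maximum is $0$ when there is no such chromosome; note $R_0=0$). Since $H(X_n(i),\un)$ counts the zeroes of $X_n(i)$, the condition $H(X_n(i),\un)\leq\ell-\sqrt\ell$ is the same as $\ell-H(X_n(i),\un)\geq\sqrt\ell$, so the definition of $\tau_2$ reads simply $\tau_2=\inf\{\,n\geq 1:R_n\geq\sqrt\ell\,\}$.

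The deterministic heart of the argument is a growth recursion for $R_n$. A chromosome with $M_n(i)=0$ is born from two parents which themselves lie outside the progeny of the initial Master sequence (a descendant of the initial Master is itself in the progeny), so both parents carry at most $R_{n-1}$ ones. After crossover the resulting chromosome has at most the sum of its two parents' ones, hence at most $2R_{n-1}$; the subsequent mutation creates at most $G_n^i:=\sum_{j=1}^\ell U_n^{i,j}$ new ones, where $G_n^i$ is the number of bit flips applied to the $i$--th chromosome at step~$n$. Therefore $R_n\leq 2R_{n-1}+\max_{1\leq i\leq m}G_n^i$. Now fix $g=\lceil\ell^{1/4}\rceil$ and let $A$ be the event that $G_n^i\leq g$ for every $n\leq\frac15\ln\ell$ and every $i$. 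Iterating the recursion from $R_0=0$ on the event $A$ gives $R_n\leq g(2^n-1)<g\,2^{(1/5)\ln\ell}=g\,\ell^{(\ln 2)/5}$ for all $n\leq\frac15\ln\ell$. Since $\tfrac14+\tfrac{\ln 2}{5}=0.388\ldots<\tfrac12$, for $\ell$ large this is strictly below $\sqrt\ell$ throughout, so that $A\subseteq\{\,\tau_2>\frac15\ln\ell\,\}$.

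The probabilistic part is then routine and yields exactly the stated form. The variables $G_n^i=\sum_{j=1}^\ell U_n^{i,j}$, indexed by $n\geq 1$ and $1\leq i\leq m$, are independent, each of law $\cB(\ell,p_M)$; as in the proof of proposition~\ref{boundN}, choosing $\lambda$ with $(1-p_M)^\ell\geq\pi/\sigma\geq e^{-\lambda}$ gives $\cB(\ell,p_M)\preceq\cP(\lambda)$ by lemma~\ref{binopoi}, and lemma~\ref{poisstail} then bounds $P(G_n^i>g)\leq(\lambda e/g)^g=:q$ once $g\geq\lambda$, i.e. for $\ell$ large. By independence over the $m\lfloor\frac15\ln\ell\rfloor$ pairs,
$$P\Big(\tau_2>\tfrac15\ln\ell\Big)\,\geq\,P(A)\,\geq\,(1-q)^{m\frac15\ln\ell}\,\geq\,\exp\Big(-2q\,m\,\tfrac15\ln\ell\Big)\,,$$
using $\ln(1-q)\geq -2q$ valid since $q\leq\frac12$ for $\ell$ large. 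Finally, with $g\approx\ell^{1/4}$ one has $\ln q\approx-\tfrac14\ell^{1/4}\ln\ell\ll-\ell^{1/4}$, so $\tfrac25 q\ln\ell\leq e^{-\ell^{1/4}}$ for $\ell$ large, whence $P(\tau_2>\frac15\ln\ell)\geq\exp(-m\,e^{-\ell^{1/4}})$, which is the claimed inequality after passing to the complement.

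I expect the main obstacle to be the deterministic growth bound rather than the probabilistic estimate: one must justify carefully that a chromosome outside the progeny has both parents outside the progeny (so that the number of ones at most doubles under crossover), and then check that the chosen threshold $g=\ell^{1/4}$ and horizon $\frac15\ln\ell$ are jointly compatible, i.e. that $\ell^{1/4}\,2^{(1/5)\ln\ell}<\sqrt\ell$ and simultaneously $(\lambda e/\ell^{1/4})^{\ell^{1/4}}$ is small enough to produce the exponent $\ell^{1/4}$ appearing in the target bound. Once these elementary inequalities are in place, the independence of the bit--flip counts and the two appendix lemmas deliver the product lower bound on $P(\tau_2>\frac15\ln\ell)$ immediately.
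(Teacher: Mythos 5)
Your proposal is correct and follows essentially the same route as the paper's proof: the quantity you call $R_n$ is the paper's $D_n$, the doubling-plus-mutation-burst recursion, the event bounding all bit-flip counts by $\ell^{1/4}$, and the Poisson domination via lemmas~\ref{binopoi} and~\ref{poisstail} all coincide. Your handling of the final step is in fact slightly more explicit than the paper's, since you justify absorbing the factor $\tfrac15\ln\ell$ into the bound $\exp(-\ell^{1/4})$ rather than leaving it implicit.
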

\begin{proof}
If $\tau_2<n$, then, before time $n$, a chromosome has been
created with at least $\sqrt{\ell}$ ones, and whose genealogy
does not contain the initial Master sequence.
We shall compute an upper bound on the number of ones
appearing in the genealogy of such a chromosome at generation~$n$.
Let us define
$$\forall n\geq 1\qquad
D_n\,=\,
\max\,\big\{\,
\ell-H(X_n(i),\un):1\leq i\leq m,\,M_n(i)=0\,\big\}\,.$$
The quantity $D_n$ is the maximum number of ones in 
a chromosome
of the generation~$n$, which does not belong to the progeny
of the initial Master sequence.
These ones must have been created by mutation.
Let us consider 
a chromosome
of the generation~$n+1$, which does not belong to the progeny
of the initial Master sequence.
The number of ones in each of its two parents was at most~$D_n$.
After crossover between these two parents, the number of ones
was at most~$2D_n$.
After mutation, the number of ones
was at most
$$D_{n+1}\,\leq\, 2D_n+
\max\,\Big\{
\sum_{j=1}^\ell U_n^{i,j}:1\leq i\leq m\Big\}\,.$$
We first control the last term.
Let $n\geq 1$ and let us define the event
$\cE(n)$ by
$$\cE(n)\,=\,
\Big\{\,
\forall i\in\um\quad
\forall k\in\{\,1,\dots,n\,\}\quad
\sum_{j=1}^\ell U_k^{i,j}\,\leq\,\ell^{1/4}\,\Big\}\,.$$
We have
$$\displaylines{
P\big(\cE(n)\big)
\,=\,\Big(1-
P\Big(
\sum_{j=1}^\ell U_1^{1,j}>\ell^{1/4}\Big)\Big)^{mn}\,.
}$$
The law of the sum
$\sum_{j=1}^\ell U_1^{1,j}$
is the Bernoulli law $\cB(\ell,p_M)$.
Let $\lambda>0$ be such that 
$\pi/\sigma\geq \exp(-\lambda)$.
We have then
$$(1-p_M)^\ell\,=\,\frac{\pi}{\sigma(1-p_C)}\,\geq\,
\frac{\pi}{\sigma}\,\geq\,
\exp(-\lambda)\,.$$
By lemma~\ref{binopoi}, the binomial law $\cB(\ell,p_M)$ is 
stochastically dominated by the Poisson law $\cP(\lambda)$. 
Using the bound given in lemma~\ref{poisstail}, we obtain that,
for $\ell^{1/4}>\lambda$, 
$$
P\big(\cE(n)\big)
\,\geq\,\Big(1-
\Big(\frac{\lambda e}{\ell^{1/4}}\Big)^{\ell^{1/4}}\Big)^{mn}\,,$$
whence, for $\ell$ large enough,
$$
P\big(\cE(n)\big)
\,\geq\,\exp\Big(-mn\exp\big(-\ell^{1/4}\big)\Big)\,.$$
Suppose that the event $\cE(n)$ occurs. We have then
$$\forall k\in\{\,0,\dots,n-1\,\}\qquad
D_{k+1}\,\leq\, 2D_k+\ell^{1/4}\,.$$
Dividing by $2^{k+1}$ and summing from $k=0$ to $n-1$, we get
$$D_n\,\leq\,2^n\sum_{k=0}^{n-1}
\frac{\ell^{1/4}}{2^{k+1}}
\,\leq\,
2^n
{\ell^{1/4}}
\,.$$
Therefore, if $2^n<
{\ell^{1/4}}$ and if the event $\cE(n)$ occurs, then
$\tau_2>n$. Taking $n=(\ln\ell)/5$, 
we obtain the estimate stated in the lemma.
\end{proof}

\subsection{Proof of theorem~\ref{thspl}}
We complete here the proof of
theorem~\ref{thspl}.
The hypothesis of 
theorem~\ref{thspl} allows to apply
proposition~\ref{boundN}. Thus there exists a subcritical
Galton--Watson process 
$(Z^*_n)_{n\geq 0}$, with reproduction law~$\nu^*$,
which dominates stochastically
the process $(N^*_{n}
1_{\{\,\tau\geq n\,\}})_{n\geq 0}$.
A standard result on Galton--Watson processes 
(see for instance~\cite{AN})
ensures the existence
of a positive constant $c^*$, which depends only on the law~$\nu^*$,
such that
$$\forall n\geq 1\qquad P\big(Z_n^*>0\big)\,\leq\,\exp(-cn)\,.$$ 
Let $\kappa,c_1$ be as in proposition~\ref{boundtau}. 
We suppose that $\kappa<1/5$, so that we can use the estimate
of
lemma~\ref{bounddau}. 
We have then
$$\displaylines{
P\big(\tau_0> \kappa\ln m\big)\,\leq\,
\hfill\cr
P\big(
\tau_0> \kappa\ln m,\,
\tau< \kappa\ln m\big)\,+\,
P\big(
N_{\lfloor\kappa\ln m\rfloor}^*>0,\,
\tau\geq \kappa\ln m\big)\cr
\,\leq\,
P\big(\tau_1<\kappa\ln m\big)\,+\,
P\big(\tau_2<\kappa\ln m\big)\,+\,
P\big(Z_{\lfloor\kappa\ln m\rfloor}^*>0\big)\cr
\,\leq\,
\frac{1}{m^{c_1}}
\,+\,
1-\exp(-m\exp\big(-m^{1/4}\big)\Big)
\,+\,
\,\exp(-c^* {\lfloor\kappa\ln m\rfloor})\,.
}$$
This inequality yields the estimate stated in theorem~\ref{thspl}.
\section{The quasispecies regime}\label{secqua}
For $\lambda\in\mathbb R$ and a population~$x$, we define $N(x,\lambda)$ as the
number of chromosomes in $x$ whose fitness is larger than or equal to~$\lambda$:
$$N(x,\lambda)\,=\,\card\,\{\,i\in\um:f(x(i))\geq\lambda\,\}\,.$$
We shall couple the processes
$(N(X_n,\lambda))_{n\in\mathbb N}$, $\lambda\in\mathbb R$, 
with a family
of Markov chains 
$\smash{\big(N_n(t,i)\big)_{n\geq t}}$,
$t\in\mathbb N$. 
We study then dynamics of these
Markov chains.

\subsection{The auxiliary chain}
Here, we couple the genetic algorithm with an auxiliary chain.
%
For $n\geq 1$,
let 
$\Gamma_n :\um^2\to\zu$
be the map defined by
$$\forall i,j\in\um\qquad
\Gamma_n(i,j)\,=\,
\begin{cases}
1 &\text{if}\quad \cI(S_n^j)\geq m-i+1\\
0 &\text{otherwise }\\
\end{cases}
$$
Recall that the map $\cI$ is defined together with the selection map $\cS$.
In fact, the map $\Gamma_n(i,j)$ is equal to one if the $j$--th 
chromosome
chosen during the selection at time~$n$ is among the best $i$
chromosomes of generation~$n$.
For each $n\geq 1$,
we define also a map $\Psi_n:\zm\to\zm$ by setting
$$\displaylines{
\forall i\in\zm
\hfill\cr
\Psi_n(i)\,=\,
\sum_{j=1}^{m}
\Big(
\Gamma_n(i,j)\,(1-V_n^{\lceil j/2\rceil})
\,\prod_{k=1}^\ell(1-U_n^{j,k})
\Big)
\,.
}$$
The map $\Psi_n(i)$ counts the number of chromosomes in generation~$n+1$
which have been obtained by
selecting a chromosome among
the best $i$ chromosomes of generation~$n$,
and for which there was no crossover and no subsequent mutation.
For any $j$, 
the map 
$i\mapsto \Gamma_n(i,j)$ is non--decreasing, therefore the map
$i\mapsto \Psi_n(i)$ is also non--decreasing.
For $t\in\N$ and $i\in\um$, let
$$\big(N_n(t,i)\big)_{n\geq t}$$
be the Markov chain starting from~$i$ at time~$t$
and defined by
$$\forall n\geq t\qquad
N_{n+1}(t,i)\,=\,\psi_n
\big(N_{n}(t,i)\big)\,.$$
\begin{proposition}\label{couplnl}
For any $t\in\N$ and $\lambda\in\R$, we have
$$\forall n\geq t\qquad
N(X_n,\lambda)\,\geq\,
N_{n}\big(t,N(X_t,\lambda)\big)\,.$$
\end{proposition}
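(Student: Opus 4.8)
The plan is to prove the inequality by induction on $n\geq t$, the engine being a one-step comparison combined with the monotonicity of $\Psi_n$ already recorded above. For $n=t$ the statement is the trivial identity $N(X_t,\lambda)=N_t(t,N(X_t,\lambda))$. For the inductive step, suppose $N(X_n,\lambda)\geq N_n(t,N(X_t,\lambda))$. Since the map $i\mapsto\Psi_n(i)$ is non--decreasing, applying it to both sides of the induction hypothesis gives
$$N_{n+1}\big(t,N(X_t,\lambda)\big)=\Psi_n\big(N_n(t,N(X_t,\lambda))\big)\,\leq\,\Psi_n\big(N(X_n,\lambda)\big)\,.$$
Everything then reduces to the one--step bound $\Psi_n(N(X_n,\lambda))\leq N(X_{n+1},\lambda)$, which I would prove separately for each $n$, with no appeal to the induction hypothesis.

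To establish this one--step bound, I would unfold the definition of $\Psi_n(N(X_n,\lambda))$ as a sum over $j\in\um$ of the indicator that simultaneously (i)~$\Gamma_n(N(X_n,\lambda),j)=1$, (ii)~$V_n^{\lceil j/2\rceil}=0$ (no crossover), and (iii)~$U_n^{j,k}=0$ for all $k\in\ul$ (no mutation). Thus $\Psi_n(N(X_n,\lambda))$ is exactly the number of indices $j$ meeting these three conditions. Reading off the coupling map $\Phi_n$, when (ii)~holds the crossover operator $\cC$ returns its first argument, and when (iii)~holds the mutation map $\cM$ leaves the chromosome untouched; hence for such a $j$ one has $X_{n+1}(j)=X_n(\cS(X_n,S_n^j))$, the selected parent itself. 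By the definition of $\cS$, this parent has rank $\cI(S_n^j)$ in the population $X_n$, and condition (i) says precisely that $\cI(S_n^j)\geq m-N(X_n,\lambda)+1$, i.e.\ the parent lies among the $N(X_n,\lambda)$ top--ranked chromosomes of generation~$n$.

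The decisive point is that these top $N(X_n,\lambda)$ ranks consist exactly of the chromosomes of fitness larger than or equal to~$\lambda$. Because the ranking function orders the population by increasing fitness while $\{\,u:f(u)\geq\lambda\,\}$ is an upper level set, the $N(X_n,\lambda)$ chromosomes of fitness $\geq\lambda$ occupy precisely ranks $m-N(X_n,\lambda)+1,\dots,m$, and this holds whatever random choice broke the ties, since ties are resolved only among chromosomes of equal fitness and so no tie ever straddles the threshold~$\lambda$. This ties--handling verification is the only genuinely delicate step; granting it, each index $j$ counted by $\Psi_n(N(X_n,\lambda))$ yields a distinct slot of $X_{n+1}$ occupied by a chromosome of fitness $\geq\lambda$, whence $N(X_{n+1},\lambda)\geq\Psi_n(N(X_n,\lambda))$. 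Chaining this with the monotonicity display of the first paragraph gives $N_{n+1}(t,N(X_t,\lambda))\leq N(X_{n+1},\lambda)$, which closes the induction.
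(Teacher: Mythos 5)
Your proof is correct and follows essentially the same route as the paper: induction on $n$, the monotonicity of $\Psi_n$ applied to the induction hypothesis, and the one-step bound $N(X_{n+1},\lambda)\geq\Psi_n(N(X_n,\lambda))$. The only difference is that the paper asserts this one-step bound in a single sentence (``$\Psi_n(N(X_n,\lambda))$ is a lower bound on the number of exact copies of chromosomes of fitness $\geq\lambda$''), whereas you unfold the coupling maps and verify the tie-breaking point explicitly, which is a faithful elaboration rather than a different argument.
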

\begin{proof}
Let us fix $\lambda\in\mathbb R$.
At time $t$, there is equality.
We prove the inequality by induction over~$n$.
Suppose that the inequality holds at time $n\geq t$.
The value $\Psi_n(N(X_n,\lambda))$ is a lower bound on the number of
chromosomes in generation~$n+1$ which are an exact copy of one 
of the
chromosomes of generation~$n$ which have a fitness larger than or 
equal to~$\lambda$.
Therefore
$$N(X_{n+1},\lambda)\,\geq\,
\Psi_n\big(N(X_n,\lambda)\big)\,.$$
Using the inequality at time~$n$ and the monotonicity of~$\Psi_n$, we get
$$\Psi_n\big(N(X_n,\lambda)\big)\,\geq\,
\Psi_n\big(N_n(t,N(X_t,\lambda))\big)\,=\,
N_{n+1}\big(t,N(X_t,\lambda)\big)\,.$$
The induction step is completed.
\end{proof}
\subsection{Transition probabilities of $N_n$}
To alleviate the notation, we suppose that 
the Markov chain
$\smash{\big(N_n(t,i)\big)_{n\geq t}}$
starts 
at time $0$, we remove
$t,i$ from the notation and
we write simply
$(N_n)_{n\geq 0}$.
Let us compute the transition probabilities of
$(N_n)_{n\geq 0}$.
The null state is absorbing for the Markov chain
$(N_n)_{n\geq 0}$. 
By definition, we have
$$\displaylines{
\forall n\geq 1\qquad
N_{n+1}
\,=\,
\sum_{j=1}^{m}
\Big(
\Gamma_n(N_n,j)\,(1-V_n^{\lceil j/2\rceil})
\,\prod_{k=1}^\ell(1-U_n^{j,k})
\Big)
\,.
}$$
The random variable $N_{n+1}$ is a sum of $m$
identically distributed
Bernoulli random variables, whose parameter depends on the
value of $N_n$. Yet these random variables are not independent,
because the
crossover creates a correlation between two 
consecutive chromosomes
(through the variable 
$\smash{V_n^{\lceil j/2\rceil}}$).
In order to get rid of this correlation, we first count 
the number of crossover occurring in generation~$n$, and then
we sum over the indices where no crossover has taken place.
Let $B_n$ be the random variable defined by
$$B_n=
\frac{m}{2}-\sum_{j=1}^{m/2}
V_n^{j}\,.$$
The law of $B_n$ is the binomial law $\cB(m/2,1-p_C)$.
Conditionally on $N_n=i$, the law of $N_{n+1}$ is the same
as the law of the random variable
$$\sum_{k=1}^{2B_n}Y_k^i\,,$$
where the variables
$Y_k^i$, $k\in\N$, $i\in\um$, are Bernoulli i.i.d. random variables
(independent of $B_n$ as well) with parameter
$$\varepsilon_m(i)\,=\,
\big(F_m(m-i+1)+\cdots+F_m(m)\big)
(1-p_M)^\ell\,.$$
Finally, we have
for $i\in\um$ and $j\in\zm$,
$$\displaylines{
P\big(
N_{n+1}=j
\,|\,
N_{n}=i
\,\big)
\,=\,
\hfill\cr
\sum_{b=0}^{m/2}
\begin{pmatrix}
{m/2}\\
{b}
\end{pmatrix}
(1-p_C)^b
p_C^{m/2-b}
\begin{pmatrix}
{2b}\\
{j}
\end{pmatrix}
\varepsilon_m(i)^j
(1-\varepsilon_m(i))^{2b-j}\,.
}$$

\subsection{Large deviations upper bound}
The formula for the transition probabilities
is very complicated, so we will study its asymptotics
as $m$ goes to~$\infty$. The goal of this section is to prove
the large deviations upper bound stated in proposition~\ref{ldup}.
We do not have a genuine large deviations principle for the
chain
$(N_n)_{n\geq 0}$, because there is some freedom left for the 
parameters $p_C,p_M,\ell$. In order to derive a corresponding
lower bound, we would have to fix the limiting value of
$p_C$ and $(1-p_M)^\ell$. However we wish to focus on the role
of the parameter $\pi$, and for our purpose, we need only
a large deviations upper bound under the constraint
$$\pi
\,=\,\sigma(1-p_C)(1-p_M)^\ell\,.$$
For $p\in[0,1]$ and $t\geq 0$, we define
$$
I(p,t)\,=\,
\begin{cases}
\displaystyle t\ln\frac{t}{p}
+(1-t)\ln\frac{1-t}{1-p}& 0<p<1,\, 0\leq t\leq 1\\
\quad\qquad\qquad 0&\text{$t=p=0$ or $t=p=1$} \\
\qquad\qquad +\infty &\text{$(p\in\{0,1\},\, t\neq p)$ 
or $t>1$ or $p>1$} \\
\end{cases}
$$
The function $I(p,\cdot)$ is the rate function governing the large deviations
of the binomial distribution $\cB(n,p)$ with parameters $n$ and $p$.
It is standard that $I$ is lower semicontinuous with respect to~$t$,
yet we will need a little more, as stated in the next lemma.
\begin{lemma}\label{lscbnp}
The map $I(p,t)$
is sequentially lower semicontinuous in 
$p,t$, i.e., 
for any 
$p\in[0,1]$, $t\in {\mathbb R}^+$, any sequences
$(p_n)_{n\geq 1}$,
$(t_n)_{n\geq 1}$ converging towards $p,t$, we have
$$\liminf_{n\to\infty} \,I(p_n,t_n)\,\geq\,I(p,t)\,.$$
\end{lemma}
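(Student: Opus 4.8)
The plan is to avoid a direct case analysis of the piecewise formula and instead exhibit $I$ as a pointwise supremum of jointly continuous functions, since such a supremum is automatically lower semicontinuous. Concretely, I would start from the Cram\'er/Legendre representation of the Bernoulli rate function: for every $(p,t)\in[0,1]\times\R^+$,
$$I(p,t)\,=\,\sup_{\theta\in\R}\Big(\theta t-\ln\big(1-p+pe^{\theta}\big)\Big)\,.$$
The inner expression $\phi_\theta(p,t)=\theta t-\ln(1-p+pe^\theta)$ is well defined and jointly continuous on $[0,1]\times\R^+$, because $1-p+pe^\theta\geq\min(1,e^\theta)>0$ never vanishes there. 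This is the crux: each $\phi_\theta$ is genuinely continuous up to and including the boundary $p\in\{0,1\}$, whereas the explicit logarithmic formula is not, so passing to the variational form is what turns an awkward boundary argument into a soft one.

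First I would verify that this representation reproduces the piecewise definition in every case. For $0<p<1$ and $0<t<1$ the supremum is attained at the interior critical point $\theta$ solving $pe^\theta/(1-p+pe^\theta)=t$, and substituting gives $t\ln(t/p)+(1-t)\ln((1-t)/(1-p))$; the endpoints $t\in\{0,1\}$ are recovered by letting $\theta\to-\infty$ and $\theta\to+\infty$ (with the convention $0\ln 0=0$). For $t>1$ one sends $\theta\to+\infty$, where the bracket behaves like $\theta(t-1)-\ln p\to+\infty$, so $I(p,t)=+\infty$. Finally the degenerate values give $\phi_\theta(0,t)=\theta t$ and $\phi_\theta(1,t)=\theta(t-1)$, whose suprema over $\theta\in\R$ equal $0$ exactly at $t=0$ and at $t=1$ respectively, and $+\infty$ otherwise, matching the stated values. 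This bookkeeping at $p\in\{0,1\}$ and at $t\geq 1$ is the only delicate part of the whole argument.

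Once the representation is in hand, the lemma is immediate. Given $(p_n,t_n)\to(p,t)$ and any fixed $\theta$, joint continuity of $\phi_\theta$ yields
$$\liminf_{n\to\infty}I(p_n,t_n)\,\geq\,\liminf_{n\to\infty}\phi_\theta(p_n,t_n)\,=\,\phi_\theta(p,t)\,,$$
and taking the supremum over $\theta$ on the right gives $\liminf_n I(p_n,t_n)\geq I(p,t)$, which is the asserted sequential lower semicontinuity. The same reasoning covers the value $I(p,t)=+\infty$: for each level $M$ one selects $\theta$ with $\phi_\theta(p,t)>M$, and continuity forces $I(p_n,t_n)\geq\phi_\theta(p_n,t_n)>M$ for all large $n$, so $\liminf_n I(p_n,t_n)=+\infty$. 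I expect the main obstacle to lie entirely in verifying the variational formula at the degenerate boundary values $p\in\{0,1\}$ and at $t\geq1$, rather than in the lower-semicontinuity deduction itself, which is a formal consequence of writing $I$ as a supremum of continuous functions.
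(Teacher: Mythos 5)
Your proof is correct, but it follows a genuinely different route from the paper. The paper proves the lemma by a direct case analysis on the explicit piecewise formula: for $0<p<1$ the claim is essentially continuity of the expression $t\ln(t/p)+(1-t)\ln((1-t)/(1-p))$ (with the usual conventions), for $p=0,\,t>0$ or $p=1,\,t<1$ one checks that $I(p_n,t_n)\to+\infty$ because one of the logarithms blows up, and at $(0,0)$ and $(1,1)$ the inequality is trivial since $I$ vanishes there. You instead invoke the Legendre--Cram\'er representation $I(p,t)=\sup_{\theta}\bigl(\theta t-\ln(1-p+pe^{\theta})\bigr)$, note that each $\phi_\theta$ is jointly continuous on $[0,1]\times\R^+$ since $1-p+pe^{\theta}\geq\min(1,e^{\theta})>0$, and conclude by the standard fact that a supremum of continuous functions is lower semicontinuous. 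I checked your bookkeeping at the degenerate values: the critical-point computation for $0<p<1$, $0<t<1$ reproduces the stated formula, the limits $\theta\to\pm\infty$ give $\ln(1/(1-p))$ and $\ln(1/p)$ at $t=0$ and $t=1$, the case $t>1$ diverges, and $\phi_\theta(0,t)=\theta t$, $\phi_\theta(1,t)=\theta(t-1)$ give exactly the values $0$ at $t=p$ and $+\infty$ otherwise; so the representation does agree with the paper's definition on all of $[0,1]\times\R^+$ (the clause $p>1$ in the definition is irrelevant here). The trade-off is clear: your argument makes the lower-semicontinuity step completely soft and generalizes immediately to other Cram\'er transforms, but it relocates all the boundary case-checking into the verification of the variational formula, which is comparable in length to the paper's direct (and shorter) case analysis. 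Both proofs are valid.
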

\begin{proof}
We need only to distinguish a few cases.
For $0<p<1$, the result is straightforward.
If $p=0$ and $t>0$, 
or if $p=1$ and $t<1$, 
we check that
$$\liminf_{n\to\infty} \,I(p_n,t_n)\,=\,+\infty\,.$$
If $p=t=0$ or if $p=t=1$, the inequality holds 
since $I(0,0)=I(1,1)=0$.
\end{proof}

\noindent
We define, for 
$s\in]0,1]$,
$t\in[0,1]$,
$$\displaylines{
V_{1}(s,t)\,=\,
\inf\Big\{\,
\frac{1}{2}I\big(1-p,\beta\big)
+\beta
I\Big(
\frac{
\big(1-F(1-s)\big)
\pi}{\sigma(1-p)},
\frac{t}{\beta}\Big):\hfill\cr
\hfill
0\leq p\leq 1-\frac{\pi}{\sigma},\,t\leq\beta\leq 1\,\Big\}
\,.
}$$
We set also $V_1(0,0)=0$ and $V_1(0,t)=+\infty$ for $t>0$.
\begin{lemma}\label{lscp}
The map 
$V_{1}(s,t)$ is sequentially lower semicontinuous
in $s,t$, i.e.,
for any 
$s,t\in[0,1]$, any sequences
$(s_n)_{n\geq 1}$,
$(t_n)_{n\geq 1}$ converging towards $s,t$, we have
$$\liminf_{n\to\infty} \,V_1(s_n,t_n)\,\geq\,V_1(s,t)\,.$$
\end{lemma}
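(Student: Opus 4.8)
The plan is to read $V_1$ as a constrained infimum of an objective that is jointly lower semicontinuous, and to isolate the degenerate boundary $s=0$, where $V_1$ jumps to $+\infty$. Write $q(s,p)=\frac{(1-F(1-s))\pi}{\sigma(1-p)}$ and $g_{s,t}(p,\beta)=\frac12 I(1-p,\beta)+\beta I\big(q(s,p),t/\beta\big)$, so that $V_1(s,t)=\inf\{g_{s,t}(p,\beta):0\le p\le 1-\frac{\pi}{\sigma},\ t\le\beta\le 1\}$, with the convention $\beta I(q,t/\beta)=0$ when $\beta=t=0$ (the limit of $-\beta\ln(1-q)$). Passing to a subsequence I may assume $V_1(s_n,t_n)\to L:=\liminf_n V_1(s_n,t_n)$; since $I\ge 0$ we have $V_1\ge 0$, and if $L=+\infty$ there is nothing to prove, so assume $L<\infty$. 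The one elementary fact I would record at the outset is that on the feasible set $1-p\ge\frac{\pi}{\sigma}$, hence $q(s,p)\le 1-F(1-s)$ uniformly in $p$.

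For the case $s>0$, I would run the standard compactness argument for infima of lower semicontinuous functions over moving compact sets. For $n$ large, $q(s_n,\cdot)$ is continuous and $t_n/\beta$ is continuous on $\{\beta>0\}$ (and extends continuously at $\beta=0$ when $t_n=0$, since $\beta I(q,0)=-\beta\ln(1-q)\to 0$); together with lemma~\ref{lscbnp} this makes $g_{s_n,t_n}$ lower semicontinuous on the compact box $[0,1-\frac{\pi}{\sigma}]\times[t_n,1]$, so it attains its infimum at some $(p_n,\beta_n)$. These points all lie in the fixed box $[0,1-\frac{\pi}{\sigma}]\times[0,1]$, so I extract a convergent subsequence $(p_n,\beta_n)\to(p^*,\beta^*)$; the constraint $\beta_n\ge t_n$ passes to the limit, giving $\beta^*\ge t$, so $(p^*,\beta^*)$ is feasible for $(s,t)$. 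It remains to check $\liminf_n g_{s_n,t_n}(p_n,\beta_n)\ge g_{s,t}(p^*,\beta^*)$. The first term is handled by lemma~\ref{lscbnp}. For the second: if $\beta^*>0$, then $t_n/\beta_n\to t/\beta^*$ and $q(s_n,p_n)\to q(s,p^*)$ (using $s>0$ and $1-p^*\ge\frac{\pi}{\sigma}>0$), so lemma~\ref{lscbnp} gives $\liminf I(q(s_n,p_n),t_n/\beta_n)\ge I(q(s,p^*),t/\beta^*)$, and multiplying by $\beta_n\to\beta^*>0$ (all factors nonnegative) yields the desired bound; if $\beta^*=0$, then $t=0$, the target second term vanishes, and the second term along the sequence is $\ge 0$, so keeping only the first term suffices. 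Adding the two liminf bounds gives $L\ge g_{s,t}(p^*,\beta^*)\ge V_1(s,t)$.

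For the case $s=0$, if $t=0$ then $V_1(0,0)=0\le\liminf_n V_1(s_n,t_n)$ trivially. If $t>0$ I would show directly that $\liminf_n V_1(s_n,t_n)=+\infty=V_1(0,t)$. For $n$ large we have $s_n>0$, hence $1-F(1-s_n)<1$. For any feasible $(p,\beta)$, put $u=t_n/\beta\in[t_n,1]$ and $q=q(s_n,p)\le 1-F(1-s_n)<1$; dropping the nonnegative first term and writing $\beta I(q,u)=\frac{t_n}{u}I(q,u)=t_n\ln\frac{u}{q}+\frac{t_n(1-u)}{u}\ln\frac{1-u}{1-q}$, the inequalities $x\ln x\ge -1/e$ on $(0,1]$, $0\le\frac{t_n}{u}\le 1$, and $\ln(1-q)\le 0$ give $g_{s_n,t_n}(p,\beta)\ge t_n\ln\frac{1}{1-F(1-s_n)}-\frac{2}{e}$. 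Taking the infimum, $V_1(s_n,t_n)\ge t_n\ln\frac{1}{1-F(1-s_n)}-\frac{2}{e}$, and since $t_n\to t>0$ while $1-F(1-s_n)\to 1-F(1)=0$, the right-hand side tends to $+\infty$.

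The main obstacle is the interplay between the moving constraint $t\le\beta$ and the discontinuity of $V_1$ at $s=0$: one cannot merely invoke continuity of the integrand, and the blow-up at the boundary must be quantified. The uniform cap $q(s,p)\le 1-F(1-s)$ coming from $1-p\ge\frac{\pi}{\sigma}$ is exactly the ingredient that makes the $s=0$, $t>0$ estimate work, and verifying that minimizers stay feasible in the limit (i.e.\ $\beta^*\ge t$) is what closes the $s>0$ case.
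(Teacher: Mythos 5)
Your argument is correct and follows essentially the same route as the paper: extract (near--)minimizers $(p_n,\beta_n)$ from the compact feasible box, pass to a convergent subsequence, and transfer the bound to the limit using the continuity of $F$ together with the joint lower semicontinuity of $I$ from lemma~\ref{lscbnp}, with a case split on the degenerate limit $\beta^*=0$. The only cosmetic differences are that the paper keeps the ratio $t_n/\beta_n$ as a third compactly--valued variable instead of casing on $\beta^*$, and treats $s=0$, $t>0$ implicitly through $I(0,\cdot)=+\infty$ where you give an explicit quantitative blow--up estimate.
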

\begin{proof}
Let $s,t\in[0,1]$.
Let
$(s_n)_{n\geq 1}$,
$(t_n)_{n\geq 1}$ be two sequences in $[0,1]$
which converge towards $s,t$.
For each $n\geq 1$, let
$p_n$ and $\beta_n$
be such that
$$\displaylines{
0\leq {p_n}\leq 1-{\pi}/{\sigma}\,,\quad
t_n\leq {\beta}_n\leq 1\,,\cr
\frac{1}{2}I\big(1-{p_n},\beta_n\big)
+{\beta_n}
I\Big( \frac{ \big(1-F(1-s_n)\big)
\pi}{\sigma(1-{p_n})},
\frac{t_n}{\beta_n}\Big)
\,\leq\,
V_{1}(s_n,t_n)
+\frac{1}{n}\,.}$$
By compactness, up to the extraction of a subsequence, we can
suppose that there exist $\widetilde{p}$, $\widetilde{\beta}$,
$\widetilde{\gamma}$
such that
$$\displaylines{
0\leq \widetilde{p}\leq 1-{\pi}/{\sigma}\,,\quad
t\leq \widetilde{\beta}\leq 1\,,\cr
\lim_{n\to\infty} p_n\,=\,\widetilde{p}\,,\qquad
\lim_{n\to\infty} \beta_n\,=\,\widetilde{\beta}\,,\qquad
\lim_{n\to\infty} \frac{t_n}{\beta_n}\,=\,\widetilde{\gamma}\,.}$$
Using the continuity of $F$ and the lower semicontinuity of $I$,
we obtain 
$$\liminf_{n\to\infty} \,V_1(s_n,t_n)\,\geq\,
\frac{1}{2}I\big(1-\widetilde{p},\widetilde{\beta}\big)
+\widetilde{\beta}
I\Big( \frac{ \big(1-F(1-s)\big)
\pi}{\sigma(1-\widetilde{p})},
\widetilde{\gamma}
\Big)\,.$$
Let us denote by $\Delta$ the righthand quantity.
We distinguish several cases:
\medskip

\noindent
$\bullet\quad  t>0$. We have then $\widetilde{\gamma}=t/\widetilde{\beta}$,
whence $\Delta\geq V_1(s,t)$.

\medskip

\noindent
$\bullet\quad  t=0,\widetilde{\beta}>0$. 
We have then $\widetilde{\gamma}=0$,
whence $\Delta\geq V_1(s,0)$.
\medskip

\noindent
$\bullet\quad  t=0,\widetilde{\beta}=0, s>0$. 
We have then 
$\Delta
\geq
\frac{1}{2}I\big(1-\widetilde{p},0\big)
\geq V_1(s,0)$.
\medskip

\noindent
$\bullet\quad  t=0,\widetilde{\beta}=0, s=0$. Obviously, $\Delta\geq V_1(0,0)=0$.
\medskip

\noindent
In each case, we conclude that
$\Delta\geq V_1(s,t)$. This shows that $V_1$ is lower semicontinuous.
\end{proof}
\begin{proposition}\label{ldup}
For any $s\in[0,1]$, any subset $U$ of $[0,1]$, we have,
for any $n\geq 1$,
 $$\limsup_{m\to\infty}\,\frac{1}{m}\ln
P\big(
N_{n+1}\in mU
\,|\,
N_{n}=\lfloor sm\rfloor
\,\big)
\,\leq\,
-\inf_{t\in U}\,
V_{1}(s,t)\,.
$$
\end{proposition}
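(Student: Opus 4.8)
The plan is to exploit the explicit two-layer binomial structure of the transition law derived just above: conditionally on $N_n=\lfloor sm\rfloor$, one first draws the number $B_n\sim\cB(m/2,1-p_C)$ of pairs that escape crossover, and then $N_{n+1}\sim\cB(2B_n,\varepsilon_m(i))$ with $i=\lfloor sm\rfloor$. Since $N_{n+1}$ takes at most $m+1$ values, I would first reduce the set estimate to a pointwise one,
$$P\big(N_{n+1}\in mU\,|\,N_n=\lfloor sm\rfloor\big)\,\leq\,(m+1)\max_{j:\,j/m\in U}P\big(N_{n+1}=j\,|\,N_n=\lfloor sm\rfloor\big),$$
so that the polynomial prefactor disappears after taking $\frac1m\ln(\cdot)$.

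For a fixed target $j$, I would write $P(N_{n+1}=j\,|\,N_n=i)=\sum_{b=0}^{m/2}P(B_n=b)\,P(\cB(2b,\varepsilon_m(i))=j)$ and apply the elementary Chernoff bound $\cB(n,p)(\{k\})\leq\exp(-nI(p,k/n))$ to each of the two factors. This yields
$$P(N_{n+1}=j\,|\,N_n=i)\,\leq\,\Big(\tfrac m2+1\Big)\max_{0\leq b\leq m/2}\exp\Big(-\tfrac m2 I\big(1-p_C,\tfrac{2b}m\big)-2b\,I\big(\varepsilon_m(i),\tfrac{j}{2b}\big)\Big).$$
Writing $\beta=2b/m$ and $t=j/m$ (note $\binom{2b}{j}=0$ unless $t\leq\beta$, which reproduces the constraint $t\leq\beta\leq1$), the exponent becomes $-m\big[\tfrac12 I(1-p_C,\beta)+\beta I(\varepsilon_m(i),t/\beta)\big]$, and minimizing over the admissible grid of values of $\beta$ produces exactly the quantity defining $V_{1}$, but with $p_C$ in place of $p$ and $\varepsilon_m(i)$ in place of $\frac{(1-F(1-s))\pi}{\sigma(1-p)}$.

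The heart of the matter is then to identify the limit of these finite-$m$ exponents. Using the identity $(1-p_M)^\ell=\pi/(\sigma(1-p_C))$, which forces $p_C\in[0,1-\pi/\sigma]$, the admissible range for $p$, together with the hypothesis on $F_m$, namely $F_m(m-\lfloor sm\rfloor+1)+\cdots+F_m(m)\to 1-F(1-s)$, I get $\varepsilon_m(\lfloor sm\rfloor)\to\frac{(1-F(1-s))\pi}{\sigma(1-p)}$ whenever $p_C\to p$. Since $p_C$ is a free parameter that need not converge, I would argue along subsequences: pick a subsequence realizing the $\limsup$, extract a further subsequence along which $p_C\to p\in[0,1-\pi/\sigma]$ and along which the grid minimizers $\beta$ and the targets $t$ converge, and pass to the limit using the lower semicontinuity of $I$ from lemma~\ref{lscbnp}. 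This bounds each limiting exponent below by the corresponding value appearing in the definition of $V_{1}$, hence by $V_{1}(s,\cdot)$ at the limiting target; invoking the lower semicontinuity of $V_{1}$ from lemma~\ref{lscp} then controls the infimum and yields the bound $-\inf_{t\in U}V_{1}(s,t)$.

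I expect the main obstacle to be precisely this simultaneous passage to the limit: one cannot fix the limiting values of $p_C$ and $(1-p_M)^\ell$, so the rate function is recovered only as a lower semicontinuous envelope, and lemmas~\ref{lscbnp} and~\ref{lscp} are what make the subsequence argument go through. A secondary point requiring care is the interplay between the discreteness of the admissible $\beta$-grid and the constraint region $\{t\leq\beta\leq1\}$, together with the convergence of the selection masses $F_m(m-\lfloor sm\rfloor+1)+\cdots+F_m(m)$ to $1-F(1-s)$, which must be inserted uniformly enough to survive the minimization over $\beta$.
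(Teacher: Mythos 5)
Your reduction to a pointwise bound, the two--layer entropy estimate, and the identification of the finite--$m$ exponent $-m\big[\tfrac12 I(1-p_C,\beta)+\beta I(\varepsilon_m(i),t/\beta)\big]$ all match the paper's computation (the paper uses the Stirling--type bound of lemma~\ref{cnk} where you use the exact entropy bound for binomial point masses; this is immaterial). The gap is in your final limiting step. Having located the minimizing targets $t_n=j_n/m\in U$, you extract a subsequence along which $t_n\to t_\infty$ and bound the limiting exponent below by $V_1(s,t_\infty)$; but $t_\infty$ only lies in $\overline U$, and for a merely \emph{lower} semicontinuous function the inequality $V_1(s,t_\infty)\geq\inf_{t\in U}V_1(s,t)$ can fail --- lower semicontinuity is precisely the property that allows $V_1(s,\cdot)$ to drop at a limit point of $U$ that is not in $U$. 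Since the proposition is asserted for an arbitrary subset $U$ with $\inf_U$ (not $\inf_{\overline U}$), lemma~\ref{lscp} cannot close this step: it controls the wrong direction.

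The paper avoids the issue by never taking a limit in $t$ (nor in $p_C$ or $\beta$). Since $V_1(s,t)$ is \emph{defined} as an infimum over all admissible pairs $(p,\beta)$ with $0\leq p\leq 1-\pi/\sigma$ and $t\leq\beta\leq 1$, and since the actual pair $(p_C,2b/m)$ is admissible (the constraint $\pi=\sigma(1-p_C)(1-p_M)^\ell$ with $(1-p_M)^\ell\leq 1$ forces $1-p_C\geq\pi/\sigma$), the finite--$m$ exponent at target $j/m\in U$ is already bounded below by $V_1(s,j/m)\geq\inf_{t\in U}V_1(s,t)$, up to the single discrepancy between $\varepsilon_m(\lfloor sm\rfloor)$ and $\varepsilon(s)=(1-F(1-s))(1-p_M)^\ell$. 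That discrepancy is controlled \emph{uniformly in the second argument of $I$} by the bound $\big|\ln\big(\varepsilon(s)/\varepsilon_m\big)\big|+\big|\ln\big((1-\varepsilon(s))/(1-\varepsilon_m)\big)\big|\leq\frac{2}{\gamma}\Delta(s,m)\to 0$, valid for $0<s<1$; the cases $s=0$ and $s=1$ are handled separately, a point your write--up also omits. With this uniform replacement your argument goes through with no subsequence extraction at all, and lemmas~\ref{lscbnp} and~\ref{lscp} are not needed in this proposition --- they are reserved for the analysis of the zero set of $V$ in lemma~\ref{vpro}.
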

\begin{proof}
Let $n\geq 1$, let $i\in\um$ and $j\in\zm$.
From lemma~\ref{cnk}, we obtain that, for any $b\leq m/2$,
$$\displaylines{
\begin{pmatrix}
{m/2}\\
{b}
\end{pmatrix}
(1-p_C)^b
p_C^{m/2-b}
\begin{pmatrix}
{2b}\\
{j}
\end{pmatrix}
\varepsilon_m(i)^j
(1-\varepsilon_m(i))^{2b-j}\,\leq\,
\hfill\cr
\exp\,
\bigg(\,
-m\,
\Big\{
\frac{1}{2}I\Big(1-p_C,\frac{2b}{m}\Big)
+
\frac{2b}{m}I\Big(\varepsilon_m(i),\frac{j}{2b}\Big)\Big\}
+4\ln m + 6\bigg)\,.}
$$
We take the maximum with respect to $b$, we sum and we get
$$\displaylines{
P\big(
N_{n+1}=j
\,|\,
N_{n}=i
\,\big)
\,\leq\,
(m+1)\,\times
\hfill\cr
\exp\,
\bigg(\,
-m\,\min_{0\leq b\leq m/2}
\Big\{
\frac{1}{2}I\Big(1-p_C,\frac{2b}{m}\Big)
+
\frac{2b}{m}I\Big(\varepsilon_m(i),\frac{j}{2b}\Big)\Big\}
+4\ln m + 6\bigg)\,.
}$$
We seek next a large deviations upper bound for the transition
probabilities.
Let $s\in [0,1]$ and let us take $i=\lfloor ms\rfloor$.
We first consider the cases $s=0$ and $s=1$.
For $s=0$, we have 
$\varepsilon_m(0)=0$ and
$$P\big(
N_{n+1}\in mU
\,|\,
N_{n}=0
\,\big)
\,=\,
\begin{cases}
0\quad\text{if}\quad 0\not\in mU\\
1\quad\text{if}\quad 0\in mU\\
\end{cases}
$$
and the inequality stated in the lemma holds.
Suppose that $s=1$. We have then
$$\varepsilon_m(m)
\,=\,(1-p_M)^\ell
\,=\,
\frac{ \pi}{\sigma(1-p_C)}\,.$$
It follows that
$$\displaylines{
P\big(
N_{n+1}\in mU
\,|\,
N_{n}=m
\,\big)
\,\leq\,
(m+1)^2\,
\exp\big(4\ln m + 6\big)
\times
\hfill\cr
\exp\,
\bigg(\,
-m\,
\min_{0\leq b\leq m/2}
\min_{j\in mU}
\Big\{
\frac{1}{2}I\Big(1-p_C,\frac{2b}{m}\Big)
+
\frac{2b}{m}I\Big(
\frac{
\pi}{\sigma(1-p_C)},
\frac{j}{2b}\Big)\Big\}
\bigg)
\hfill\cr
\,\leq\,
(m+1)^2\,
\exp\big(4\ln m + 6\big)
\exp\,
\bigg(\,
-m\,
\min_{t\in U}
V_{1}(1,t)
\bigg)\,.
}
$$
Taking $\ln$, dividing by~$m$ and sending $m$ to $\infty$, we obtain
the desired large deviations upper bound.
From now on, we suppose that $0<s<1$.
We have
$$\lim_{m\to+\infty}
F_m(m- \lfloor sm\rfloor
+1)+\cdots+F_m(m)\,=\,1-F(1-s)$$
whence
$$
\varepsilon_m(\lfloor sm\rfloor)
\sim\,\big(1-F(1-s)\big)
(1-p_M)^\ell\quad\text{as}\quad m\to\infty
\,.$$
Let us set
$$\varepsilon(s)\,=\,
\big(1-F(1-s)\big)
(1-p_M)^\ell\,.$$
For any $u\in[0,1]$, we have
$$\Big|
I\big(\varepsilon_m(\lfloor sm\rfloor),u\big)-
I\big(\varepsilon(s),u\big)\Big|
\,\leq\,
\Big|\ln\frac{\varepsilon(s)}
{\varepsilon_m(\lfloor sm\rfloor)
}
\Big|
\,+\,
\Big|\ln\frac{1-\varepsilon(s)}
{
1-\varepsilon_m(\lfloor sm\rfloor)}
\Big|
\,.$$
In order to bound these terms, we suppose in addition that
$0<s<1$.
Since $F$ is strictly increasing
on $[0,1]$ by hypothesis, then
$0<F(1-s)<1$. 
It follows that there exist $\gamma$ and $m_0(s)$
such that 
for $m\geq m_0(s)$,
$$0\,<\,\gamma\,<\,F_m(m- \lfloor sm\rfloor
+1)+\cdots+F_m(m)\,<\,1-\gamma\,<\,1\,.$$
Let us set
$$\Delta(s,m)\,=\,
\big|
1-F(1-s)-
\big(F_m(m- \lfloor sm\rfloor
+1)+\cdots+F_m(m)\big)
\big|\,.$$
Since, for any $a\leq 1$, any $x\in]0,1[$,
$$\Big|
\frac{\partial}{\partial x}\ln(1-xa)
\Big|
\,=\,
\Big|
\frac{a}{1-xa}
\Big|
\,\leq\,
\frac{1}{1-x}\,,$$
we have, for $m\geq m_0(s)$,
$$\displaylines{
\Big|\ln\frac{1-\varepsilon(s)}
{
1-\varepsilon_m(\lfloor sm\rfloor)}
\Big|\,\leq\,
\frac{1}{\gamma}
\Delta(s,m)
\,.}$$
Similarly, we have
$$\displaylines{
\Big|\ln\frac{\varepsilon(s)}
{
\varepsilon_m(\lfloor sm\rfloor)}
\Big|\,\leq\,
\frac{1}{\gamma}
\Delta(s,m)
\,.}$$
These inequalities hold uniformly with respect to
the value of $(1-p_M)^\ell$.
Let now $s\in]0,1[$ and let $U$ be a subset of $[0,1]$. 
Collecting together the previous inequalities, we have,
for any $m\geq m_0(s)$,
$$\displaylines{
P\big(
N_{n+1}\in mU
\,|\,
N_{n}=\lfloor sm\rfloor
\,\big)
\,\leq\,
(m+1)^2\,
\times
\hfill\cr
\exp\,
\bigg(\,
-m\,
\min_{0\leq b\leq m/2}
\min_{j\in mU}
\Big\{
\frac{1}{2}I\Big(1-p_C,\frac{2b}{m}\Big)
+
\frac{2b}{m}I\Big(
\frac{
\big(1-F(1-s)\big)
\pi}{\sigma(1-p_C)},
\frac{j}{2b}\Big)\Big\}
\hfill\cr
\hfill
+4\ln m + 6
+
\frac{2m}{\gamma}\Delta(s,m)
\bigg)\,.
}
$$
We are now in position to replace the discrete variational problem appearing
in this inequality by a continuous one.
Let $V_{1}(s,t)$ be the function defined before
lemma~\ref{lscp}.
The previous inequality implies that,
for any $m\geq m_0(s)$,
$$\displaylines{
P\big(
N_{n+1}\in mU
\,|\,
N_{n}=\lfloor sm\rfloor
\,\big)
\,\leq\,
\hfill\cr
(m+1)^2\,
\exp\,
\bigg(\,
-m\,
\min_{t\in U}
V_{1}(s,t)
+4\ln m + 6
+
\frac{2m}{\gamma}\Delta(s,m)
\bigg)\,.
}
$$
Taking $\ln$, dividing by~$m$ and sending $m$ to $\infty$, we obtain
the desired large deviations upper bound.
\end{proof}

\noindent
Proceeding in the same way, we can prove a similar large deviations
upper bound for the
$l$--step transition
probabilities.
For $l\geq 1$, we define a function 
$V_{l}$ 
on $[0,1]\times [0,1]$ by
\begin{multline*}
V_{l}(s,t)\,=\,
\inf\,\Big\{\,
\smash{\sum_{k=0}^{l-1}}
V_{1}\big(\rho_k,\rho_{k+1}\big):\rho_0=s
,\,\rho_{l}=t,\,\cr
\rho_k\in [0,1]\text{ for }0\leq k<l
\,\Big\}\,.
\end{multline*}
\begin{corollary}\label{ppgd}
For $l\geq 1$, the $l$--step transition probabilities of 
$(N_n)_{n\geq 0}$ satisfy the following 
large deviations upper bound:
for any $s\in]0,1[$, any subset $U$ of $[0,1]$, we have
 $$\limsup_{m\to\infty}\,\frac{1}{m}\ln
P\big(
N_{n+l}\in mU
\,|\,
N_{n}=\lfloor sm\rfloor
\,\big)
\,\leq\,
-\inf_{t\in U}\,
V_{l}(s,t)\,.
$$
\end{corollary}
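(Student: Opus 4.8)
The plan is to imitate the proof of Proposition~\ref{ldup}, but to run the estimate over $l$ consecutive steps at once. The crucial input is the \emph{non-asymptotic} single-step bound produced there via Lemma~\ref{cnk}: writing
$$\widetilde V^{(m)}(i,j)\,=\,\min_{0\le b\le m/2}\Big\{\tfrac12 I\big(1-p_C,\tfrac{2b}{m}\big)+\tfrac{2b}{m}I\big(\varepsilon_m(i),\tfrac{j}{2b}\big)\Big\}\,,$$
one has, uniformly over all $i\in\um$ and $j\in\zm$,
$$P\big(N_{n+1}=j\,|\,N_n=i\big)\,\le\,(m+1)\exp\big(-m\,\widetilde V^{(m)}(i,j)+4\ln m+6\big)\,.$$
Since this holds simultaneously for every pair $(i,j)$, it supplies exactly the uniformity needed to chain $l$ steps without invoking any separate uniform large-deviation statement, which is why I prefer this direct route to an induction on~$l$.

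First I would expand, by the Markov property,
$$P\big(N_{n+l}\in mU\,|\,N_n=\lfloor sm\rfloor\big)\,=\,\sum_{i_1,\dots,i_{l-1}}\ \sum_{i_l\in mU}\ \prod_{k=0}^{l-1}P\big(N_{n+k+1}=i_{k+1}\,|\,N_{n+k}=i_k\big)\,,$$
with $i_0=\lfloor sm\rfloor$. Inserting the single-step bound in each of the $l$ factors, the polynomial prefactors combine into $(m+1)^{l}\exp\big(l(4\ln m+6)\big)$, while the number of summands is at most $(m+1)^{l}$; both are subexponential in~$m$. Hence, after taking $\tfrac1m\ln(\cdot)$ and $\limsup_m$, only the exponential rate survives and the claim reduces to
$$\liminf_{m\to\infty}\ \min\ \sum_{k=0}^{l-1}\widetilde V^{(m)}\big(i_k,i_{k+1}\big)\,\ge\,\inf_{t\in U}V_l(s,t)\,,$$
the minimum being taken over lattice paths $i_0=\lfloor sm\rfloor,i_1,\dots,i_{l-1},i_l$ with $i_l\in mU$.

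The remaining, and main, step is the passage from this discrete variational problem to the continuous one defining $V_l$. For each $m$ I would pick a minimizing path and set $\rho^{(m)}_k=i_k/m\in[0,1]$; by compactness of $[0,1]^{l-1}$ one extracts a subsequence along which $\rho^{(m)}_k\to\rho^*_k$ for every $k$, with $\rho^*_0=s$ and $\rho^*_l\in\overline U$. The key lemma to establish is the sequential edgewise bound: whenever $\rho^{(m)}\to\rho$ and $\sigma^{(m)}\to\sigma$ in $[0,1]$, one has $\liminf_m \widetilde V^{(m)}(\lfloor\rho^{(m)}m\rfloor,\lfloor\sigma^{(m)}m\rfloor)\ge V_1(\rho,\sigma)$. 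This is proved exactly as in Proposition~\ref{ldup}: the partial sums $F_m(m-\lfloor\rho^{(m)}m\rfloor+1)+\cdots+F_m(m)$ converge to $1-F(1-\rho)$ (pointwise convergence of the monotone partial sums to the continuous limit $F$ is automatically uniform, so a moving argument $\rho^{(m)}\to\rho$ is legitimate), whence $\varepsilon_m(\lfloor\rho^{(m)}m\rfloor)\to(1-F(1-\rho))(1-p_M)^\ell$; one then extracts a limit of the optimal $\beta^{(m)}=2b^{(m)}/m$ and applies the lower semicontinuity of $I$ from Lemma~\ref{lscbnp}, noting that $p_C$ is an admissible value of $p$ in the definition of $V_1$ because $\pi=\sigma(1-p_C)(1-p_M)^\ell$ forces $p_C\le 1-\pi/\sigma$. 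Summing this bound over $k$ and using the superadditivity of $\liminf$, the chain cost dominates $\sum_{k}V_1(\rho^*_k,\rho^*_{k+1})\ge V_l(s,\rho^*_l)$, which yields the announced bound $-\inf_{t\in U}V_l(s,t)$ exactly with the same convention as in Proposition~\ref{ldup}. The hard part will be precisely this discrete-to-continuous passage: controlling a minimizer whose intermediate coordinates move with $m$ and may approach the boundary values $0$ and $1$, which is why both the uniform convergence of the selection partial sums and the full (joint) lower semicontinuity of $I$ rather than mere semicontinuity in $t$ are needed.
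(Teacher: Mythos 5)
The paper does not actually write out a proof of this corollary (it only asserts that one proceeds as for Proposition~\ref{ldup}), and your proposal is a faithful execution of that plan: the Markov decomposition over intermediate lattice states, the uniform non-asymptotic one-step bound coming from Lemma~\ref{cnk}, the subexponential bookkeeping of the prefactors, and the compactness/lower-semicontinuity passage from the discrete chain cost to $V_l$ are exactly the right ingredients. You also correctly isolate the two delicate points: the uniformity of $\sum_{i\leq sm}F_m(i)\to F(s)$ (which does follow from P\'olya's theorem since $F$ is continuous and the partial sums are monotone) and the need for the joint lower semicontinuity of $I$ from Lemma~\ref{lscbnp}, used as in Lemma~\ref{lscp} to handle intermediate coordinates that drift with $m$, including towards $0$ or $1$.

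One loose end should be patched. When you pass the minimizing lattice path to the limit, the endpoint $\rho^{(m)}_l=i_l/m$ lies in $U$ for every $m$, but its limit $\rho^*_l$ lies only in $\overline U$; since $V_l(s,\cdot)$ is merely lower semicontinuous, $\inf_{t\in\overline U}V_l(s,t)$ can be strictly smaller than $\inf_{t\in U}V_l(s,t)$, so as written your argument proves the bound with $\overline U$, which is weaker than the statement for an arbitrary subset $U$. The remedy is to treat the final edge as in the proof of Proposition~\ref{ldup}, where the discrete candidate $j/m$ is literally an element of $U$: apply your compactness argument only to the first $l-1$ edges to reach $\rho^*_{l-1}$, and bound the last edge below by $\inf_{t\in U}V_1(\rho^*_{l-1},t)$ minus an error uniform over $j\in mU$ (the $\Delta(s,m)/\gamma$ estimate of Proposition~\ref{ldup} applied at the base point $\rho^*_{l-1}$, with the boundary cases $\rho^*_{l-1}\in\{0,1\}$ handled by the same case analysis as in Lemma~\ref{lscp}). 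For the sets $U$ actually used later in the paper the distinction is immaterial, but the corollary is stated for arbitrary $U$, so the patch is worth recording.
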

\subsection{Dynamics of $N_n$}
Let us examine when the rate function~$V_{1}(s,t)$ vanishes.
Let $\pi>1$ and let $s,t\in[0,1]$.
By lemma~\ref{lscp}, 
the variational problem defining $V_{1}(s,t)$ is well
posed, i.e., there exists $p^*,\beta^*\in [0,1]$ such that
$0\leq p^*\leq 1-{\pi}/{\sigma}$,
$t\leq\beta^*\leq 1$ and
$$V_{1}(s,t)\,=\,
\frac{1}{2}I\big(1-p^*,\beta^*\big)
+\beta^*
I\Big(
\frac{
\big(1-F(1-s)\big)
\pi}{\sigma(1-p^*)},
\frac{t}{\beta^*}\Big)\,.
$$
Thus 
$V_{1}(s,t)$ vanishes if and only if
$$\beta^*=1-p^*\,,\qquad
\frac{\big(1-F(1-s)\big)\pi}{\sigma(1-p^*)}=
\frac{t}{\beta^*}\,,
$$
or equivalently
$$t\,=\,
\big(1-F(1-s)\big)
\frac{
\pi}{\sigma}\,.$$
We define a function $\phi:[0,1]\to[0,1]$ by
$$\forall r\in[0,1]\qquad
\phi(r)\,=\,
\big(1-F(1-r)\big)
\frac{
\pi}{\sigma}\,.$$
The Markov chain
$(N_n)_{n\geq 0}$
can be seen as a random perturbation of the dynamical system
associated to the map $\phi$:
$$z_0\in[0,1]\,,\qquad
\forall n\geq 1\quad z_n\,=\,\phi(z_{n-1})\,.$$
Since $\phi$ is non--decreasing, the sequence $(z_n)_{n\in\mathbb N}$ is monotonous
and it converges to a fixed point of $\phi$.
We have supposed 
that $F$ is convex, so that $\phi$ is concave.
Moreover we have $\phi(0)=0$, $\phi(1)=\pi/\sigma<1$
and $\phi'(0)=\pi$, therefore:

\noindent
$\bullet$ 
If $\pi<1$,
then
the function $\phi$ admits only one fixed point, $0$, and
$(z_n)_{n\in\mathbb N}$ converges to $0$;

\noindent
$\bullet$ 
If $\pi> 1$,
the function $\phi$ admits two fixed points, 
$0$ 
and $\rho^*(\pi)$,
and
$(z_n)_{n\in\mathbb N}$ converges to $\rho^*(\pi)$
whenever $z_0>0$.

\noindent
We can even compute $\rho^*(\pi)$ for linear ranking
and tournament selection:

\noindent
{\bf Linear ranking selection.} In this case, we have
$$\rho^*(\pi)\,=\,\frac{2\eta^+}{\eta^+-\eta^-}\Big(1-\frac{1}{\pi}\Big)\,.$$
{\bf Tournament selection.}
The non null fixed point is the solution of 
$$1+\rho^*(\pi)+\cdots+
\rho^*(\pi)^{t-1}
\,=\,\frac{\sigma}{\pi}\,.$$
In the case where $t=2$, we obtain
$$\rho^*(\pi)\,=\,\frac{\sigma}{\pi}-1\,.$$
\noindent
The natural strategy 
to study the Markov chain
$(N_n)_{n\geq 0}$
is to use the 
Freidlin--Wentzell theory \cite{FW}. 
The crucial quantity to analyze the dynamics is the following 
cost function~$V$.
We define, for $s,t\in [0,1]$,
$$\displaylines{
V(s,t)\,=\,
\inf_{l\geq 1}\,V_{l}(s,t)
\,=\,
\hfill\cr
\inf_{l\geq 1}\,
\inf\,\Big\{\,
\sum_{k=0}^{l-1}
V_{1}\big(\rho_k,\rho_{k+1}\big):\rho_0=s
,\,\rho_{l}=t,\,
\rho_k\in [0,1]\text{ for }0\leq k<l
\,\Big\}\,.
}$$
\begin{lemma}\label{vpro}
Suppose that $\pi>1$.
For 
$s\in ]0,1[$, 
$t\in [0,1]$, 
we have $V(s,t)=0$ if and only if

\noindent
$\bullet$ 
either $s=t=0$,

\noindent
$\bullet$ or $s>0$, $t=\rho^*(\pi)$,

\noindent
$\bullet$ 
or there exists $l\geq 1$ such that $t=\phi^l(s)$.
%
\end{lemma}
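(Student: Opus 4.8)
The plan is to exploit the characterization of the zeros of $V_1$ obtained above, namely $V_1(s,t)=0$ if and only if $t=\phi(s)$, so that zero-cost paths are exactly the deterministic trajectories $\rho_{k+1}=\phi(\rho_k)$, and to feed this into the dynamics of $\phi$ (increasing, concave, with repelling fixed point $0$ and attracting fixed point $\rho^*\in{]0,1[}$ on $]0,1]$). Two preliminary observations drive everything. First, since $V_1$ is jointly lower semicontinuous (Lemma~\ref{lscp}) and $[0,1]^{l-1}$ is compact, the infimum defining $V_l(s,t)$ is attained; hence $V_l(s,t)=0$ forces a path all of whose steps have zero cost, i.e. $\rho_{k+1}=\phi(\rho_k)$, so that $t=\phi^l(s)$. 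Second, choosing $p=0,\beta=1$ in the variational formula for $V_1$ yields the one-step upper bound $V_1(a,\rho^*)\le I(\phi(a),\rho^*)$; since $\phi(a)\to\rho^*\in{]0,1[}$ as $a\to\rho^*$ and $I$ is continuous there with $I(\rho^*,\rho^*)=0$, this bound tends to $0$.

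\emph{Easy inclusion.} For $t=\phi^l(s)$ the deterministic path $s,\phi(s),\dots,\phi^l(s)$ has zero cost, so $V(s,t)=0$. For $t=\rho^*$ I follow the trajectory for a long time and then jump: along $s\to\phi(s)\to\cdots\to\phi^l(s)\to\rho^*$ the cost is $V_1(\phi^l(s),\rho^*)\le I(\phi^{l+1}(s),\rho^*)$, which tends to $0$ as $l\to\infty$ because $\phi^l(s)\to\rho^*$; hence $V(s,\rho^*)=0$.

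\emph{Hard inclusion.} Assume $V(s,t)=0$. If some $V_l(s,t)=0$, the first observation gives $t=\phi^l(s)$. Otherwise $V_l(s,t)>0$ for every $l$, so there are paths $\rho^{(n)}_0=s,\dots,\rho^{(n)}_{l_n}=t$ of length $l_n\to\infty$ with total cost $c_n\to0$. Fixing a window of $L$ steps at the front and using joint lower semicontinuity on the compact $[0,1]^{L}$, every limit of $(\rho^{(n)}_0,\dots,\rho^{(n)}_L)$ is a zero-cost, hence deterministic, path issued from $s$; by uniqueness $\rho^{(n)}_k\to\phi^k(s)$ for each fixed $k$. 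The same argument on the last $L$ steps shows that, for every $L$, the point $t$ admits a backward orbit $\tau_0,\dots,\tau_L=t$ in $[0,1]$ with $\phi(\tau_k)=\tau_{k+1}$; by compactness $t$ then has an infinite backward orbit. A short analysis of $\phi^{-1}$ (increasing, with $\phi^{-1}(y)>y$ on $]\rho^*,\pi/\sigma]$ and no fixed point in $]\rho^*,1]$, so that a backward orbit issued from $t>\rho^*$ either leaves the range $[0,\pi/\sigma]$ of $\phi$ or would have to converge to a nonexistent fixed point) shows that no $t\in{]\rho^*,1]}$ has backward orbits of all lengths; this rules out $t>\rho^*$.

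It remains to exclude $t\in[0,\rho^*[$, and this is the main obstacle. The front estimate forces the path to within $\delta$ of $\rho^*$ by some fixed time $K_0$, while the backward orbit of such a $t$ decreases to $0$, forcing $\rho^{(n)}_{l_n-L_0}<\delta$ for a suitable fixed $L_0$; thus for large $n$ the path descends from a value $>\rho^*-\delta$ to a value $<\delta$ at cost $\le c_n\to0$. I will contradict this by showing that any descent across the level $\mu=\rho^*/2$ costs at least a positive constant. Such a descent contains a step from some $a\ge\mu$ to some $a'\le\mu$; since $\phi(a)\ge\phi(\mu)>\mu\ge a'$ one has $a'\neq\phi(a)$, hence $V_1(a,a')>0$, and by lower semicontinuity and compactness of $\{(a,a'):\mu\le a\le1,\ 0\le a'\le\mu\}$ the infimum $\xi$ of $V_1$ over this set is attained and therefore strictly positive. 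So every such descent costs at least $\xi>0$, contradicting $c_n\to0$ and forcing $t=\rho^*$. The delicate points are the joint-lsc extraction of the front and back limiting orbits and, above all, turning the drift of $\phi$ toward $\rho^*$ into the genuine positive lower bound $\xi$ on the action of moving against it.
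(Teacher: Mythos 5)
Your proof is correct, and it even supplies the ``if'' direction, which the paper's own proof leaves implicit (it only establishes necessity of the three conditions). The shared backbone is identical: the zero set of $V_1$ is the graph of $\phi$, and joint lower semicontinuity of $V_1$ on a compact set disjoint from that graph yields a strictly positive infimum. Where you genuinely diverge is in closing the case of unbounded path lengths. The paper first splices the near-optimal paths through $\rho^*$ --- using a continuity estimate $V_1(\rho^*,\rho)\le\varepsilon$ for $\rho$ near $\rho^*$ to deduce $V(\rho^*,t)=0$ --- and then runs a single confinement argument: for $U=\,]\rho^*-\delta,\rho^*+\delta[$ one has $\phi(\overline U)\subset U$, so the exit cost $\alpha=\inf\{V_1(\rho_0,\rho_1):\rho_0\in\overline U,\ \rho_1\notin U\}$ is positive, cheap paths started in $U$ are trapped there, and $t\in U$ for every $\delta$. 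You instead work directly on the two ends of the original paths: the front window converges to the forward orbit of $s$ (so the path enters any neighborhood of $\rho^*$ by a fixed time), the back window converges to a backward $\phi$-orbit ending at $t$, which excludes $t>\rho^*$ since $\bigcap_L\phi^L([0,1])=[0,\rho^*]$, and the positive cost $\xi$ of any downward crossing of the level $\rho^*/2$ excludes $t<\rho^*$. The paper's route handles both sides of $\rho^*$ with one compactness argument but needs the splicing step and the continuity of $V_1(\rho^*,\cdot)$ at $\rho^*$; yours avoids both at the price of two separate exclusion arguments (the backward-orbit analysis being the genuinely new ingredient). Both arguments are sound.
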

\begin{proof}
Throughout the proof, we write simply $\rho^*$ instead of $\rho^*(\pi)$.
Let $s,t\in [0,1]$ be such that $V(s,t)=0$. 
For each $n\geq 1$, let
$\smash{(\rho^n_0,\dots,\rho^n_{l(n)})}$ be a sequence 
of length $l(n)$ in $[0,1]$ such that
$$\rho^n_0=s,\,\rho^n_{l(n)}=t,\,\quad
\sum_{k=0}^{l(n)-1}
V_{1}\big(\rho^n_k,\rho^n_{k+1}\big)
\,\leq\,\frac{1}{n}\,.$$
If $s=0$, then necessarily
$\smash{\rho^n_1=\dots=\rho^n_{l(n)}=0}$ and $t=0$.
From now on, we suppose that $s>0$.
We consider two cases.
If the sequence $(l(n))_{n\geq 1}$ is bounded, then
we can extract a subsequence
$$\big(\rho^{\phi(n)}_0,\dots,
\rho^{\phi(n)}_{l({\phi(n)})}\big)$$ 
such that
$l({\phi(n)})=l$ does not depend on $n$ and
for any $k\in \zll$, the following limit exists:
$$
\lim_{n\to\infty}\,
\rho^{\phi(n)}_k
\,=\, \rho_k\,.
$$
The map $V_{1}$ being lower semicontinuous, we have then
$$\forall k\in\zll \qquad 
V_{1}\big(\rho_k,\rho_{k+1}\big)\,=\,0\,,$$
whence
$$\forall k\in\{\,0,\dots,l\,\} \qquad 
\rho_k\,=\,\phi^k(\rho_0)\,.$$
Since in addition
$\rho_0=s$ and $\rho_l=t$, we conclude that
$t=\phi^{l}(s)$. 
Suppose next that the sequence $(l(n))_{n\geq 1}$ is not bounded.
Our goal is to show that $t=\rho^*$.
Using Cantor's diagonal procedure, we can extract a subsequence
$$\big(\rho^{\phi(n)}_0,\dots,
\rho^{\phi(n)}_{l({\phi(n)})}\big)$$ 
such that,
for any $k\geq 0$, the following limit exists:
$$
\lim_{n\to\infty}\,
\rho^{\phi(n)}_k
\,=\, \rho_k\,.
$$
The map $V_{1}$ being lower semicontinuous, we have then
$$\forall k\geq 0\qquad 
V_{1}\big(\rho_k,\rho_{k+1}\big)\,=\,0\,,$$
whence
$$\forall k\geq 0\qquad 
\rho_k\,=\,\phi^k(\rho_0)\,.$$
We have also
$\smash{V_{1}\big(\rho^*,\rho^*\big)}\,=\,0$.
By lemma~\ref{lscp}, there exist
$p^*,\beta^*$ such that
$0\leq {p^*}\leq 1-{\pi}/{\sigma}$,
$\rho^*\leq {\beta^*}\leq 1$ and
$$V_{1}(\rho^*,\rho^*)\,=\,
\frac{1}{2}I\big(1-p^*,\beta^*\big)
+\beta^*
I\Big(
\frac{
\big(1-F(1-\rho^*)\big)
\pi}{\sigma(1-p^*)},
\frac{\rho^*}{\beta^*}\Big)\,.
$$
Since $\rho^*$ is in $]0,1[$, certainly we have 
$\beta^*>0$.
Let $\ve>0$.
The map
$$t\mapsto
\beta^*
I\Big(
\frac{
\big(1-F(1-\rho^*)\big)
\pi}{\sigma(1-p^*)},
\frac{t}{\beta^*}\Big)
$$
is continuous at $\rho^*$, thus
there exists a neighborhood $U$ of
$\rho^*$
such that
$$\forall \rho\in U\qquad
V_{1}
(\rho^*,\rho)\,\leq\,
\,\ve\,.$$
Since $s>0$,
the sequence $(\phi^n(s))_{n\in\mathbb N}$  converges towards
$\rho^*$
and
$\phi^h(s)\in U$ for some $h\geq 1$.
In particular,
$$\lim_{n\to\infty}\,
\rho^{\phi(n)}_h
\,=\, 
\phi^h(s)\,\in\, U
\,,$$
so that, for $n$ large enough,
$\rho^{\phi(n)}_h$ is in $U$
and
$$
V(\rho^*,t)\,\leq\,
V_{1}
\big(\rho^*,
\rho^{\phi(n)}_h
\big)+
V\big(
\rho^{\phi(n)}_h,t
\big)\,\leq\,\ve+
\frac{1}{n}\,.$$
Letting successively $n$ go to $\infty$ and $\ve$ 
go to $0$
we obtain that
$V(\rho^*,t)=0$.
Let $\delta\in\, ]0,\rho^*/2[$ and let 
$U=\,]\rho^*-\delta,\rho^*+\delta[$. 
Let $\alpha$ be the infimum
$$\alpha\,=\,\inf\big\{\,
V_{1}
\big(\rho_0,\rho_{1}\big):
\rho_0\in
\overline{U},
\rho_{1}\not\in U
\,\big\}
\,.$$
Since 
$V_{1}$
is lower semicontinuous on
the compact set
$\overline{U}\times 
\big([0,1]\setminus U\big)$, then
$$\exists (\rho^*_0,\rho^*_{1}\big)\in
\overline{U}\times 
\big([0,1]\setminus U\big)\qquad
\alpha\,=\,
V_{1}
\big(\rho^*_0,\rho^*_{1}\big)\,.$$
The function $\phi$ is non--decreasing and continuous, therefore
$$\phi\big(\overline{U}\big)\,=\,
\phi\big(
[\rho^*-\delta,\rho^*+\delta]
\big)\,=\,
\big[
\phi(\rho^*-\delta),\phi(\rho^*+\delta)\big]\,.$$
Since $\rho^*$ is the unique fixed point of $\phi$ in
$]0,1]$, then 
$\phi(\rho)>\rho$ for $\rho\in]0,\rho^*[$ and
$\phi(\rho)<\rho$ for $\rho\in]\rho^*,1[$.
Therefore we have
$$\rho^*-\delta\,<\,
\phi(\rho^*-\delta)\,\leq\,
\phi(\rho^*+\delta)\,<\,
\rho^*+\delta\,.$$
Thus
$\phi(\overline U)\,\subset\, U$
and
necessarily
$\rho^*_{1}\neq  \phi(\rho^*_0)$ and $\alpha>0$.
It follows that any sequence
$(\rho_0,\dots,\rho_l)$ such that
$$
\rho_0\in U \,,\qquad
\sum_{k=0}^{l-1}
V_1\big(\rho_k,\rho_{k+1}\big)\,<\,\alpha$$
is trapped in $U$. As a consequence, a point $t$
satisfying 
$V(\rho^*,t)=0$ must belong to 
$U=
]\rho^*-\delta,\rho^*+\delta[$.
This is true for 
any $\delta>0$, hence for any neighborhood of $\rho^*$,
thus $t=\rho^*$.
\end{proof}

\subsection{Creation of a quasispecies}
Our goal in this section is to prove a lower bound
for the probability of the creation of a quasispecies around
the current best fit chromosome in the population.
The delicate situation is when there is only one chromosome
in the population which has the best fitness. This chromosome 
might be destroyed or it might invade a positive fraction
of the population.
We will obtain a lower bound on the fixation probability
by estimating the probability that the progeny
of the best fit chromosome grows geometrically.
The key estimate is stated in the next proposition.
\begin{proposition}\label{crqa}
Let $\pi>1$ be fixed.
There exist 
$$\delta_0>0\,,\quad\rho>1\,,\quad c_0>0\,,\quad m_0\geq 1\,,$$
which depend
on $\pi$ only, such that:
for any set of parameters
$\ell,p_C,p_M$ satisfying
$\pi
\,=\,\sigma(1-p_C)(1-p_M)^\ell$,
we have
$$\displaylines{
\forall m\geq m_0\quad
\forall i\in\big\{\,1,\dots,\lfloor\delta_0m\rfloor\,\big\}\quad
P\big(\,
N_{n+1}\leq\rho i\,\big|\,N_n=i\,\big)
\,\leq\,\exp(-c_0i)\,.}$$
\end{proposition}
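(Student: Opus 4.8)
The plan is to establish the estimate by an exponential Chernoff bound, exploiting the representation of $N_{n+1}$ obtained just above the proposition: conditionally on $N_n=i$, the variable $N_{n+1}$ has the law of $\sum_{k=1}^{2B_n}Y_k^i$, where $B_n$ follows the binomial law $\cB(m/2,1-p_C)$ and the $Y_k^i$ are i.i.d. Bernoulli variables of parameter $\varepsilon_m(i)$, independent of $B_n$. First I would compute the Laplace transform explicitly. For $\theta<0$, conditioning on $B_n$ and using independence gives
$$E\big(e^{\theta N_{n+1}}\,|\,N_n=i\big)\,=\,E\big(q^{2B_n}\big)\,=\,\big(p_C+(1-p_C)q^2\big)^{m/2},\qquad q=1-\varepsilon_m(i)(1-e^\theta).$$
A Markov inequality at the level $\rho i$ then yields, after writing $\theta=-u$ with $u>0$,
$$P\big(N_{n+1}\leq\rho i\,|\,N_n=i\big)\,\leq\,\exp(u\rho i)\,\big(p_C+(1-p_C)q^2\big)^{m/2}.$$

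Next I would control the second factor. Setting $a=\varepsilon_m(i)(1-e^{-u})$, one has $p_C+(1-p_C)q^2=1-(1-p_C)\,a(2-a)$, so the elementary inequality $\ln(1-x)\leq -x$ gives
$$\frac{m}{2}\ln\big(p_C+(1-p_C)q^2\big)\,\leq\,-m(1-p_C)\,\varepsilon_m(i)(1-e^{-u})\Big(1-\tfrac{a}{2}\Big).$$
The heart of the estimate is that $m(1-p_C)\varepsilon_m(i)=m(1-p_C)(1-p_M)^\ell\big(F_m(m-i+1)+\cdots+F_m(m)\big)$ is, by the hypothesis on $F_m$, at least $(1-\varepsilon')\pi i$ once $i\leq\lfloor\delta m\rfloor$ and $m$ is large, since $\sigma(1-p_C)(1-p_M)^\ell=\pi$. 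Collecting the two factors, the probability is at most $\exp\big(-i\,[(1-\varepsilon')(1-a/2)\pi(1-e^{-u})-u\rho]\big)$.

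The constants then have to be chosen in the right order. Having fixed $\rho\in\,]1,\pi[$, say $\rho=(1+\pi)/2$, I would first pick $\varepsilon'>0$ and a threshold $\eta>0$ small enough that $(1-\varepsilon')(1-\eta/2)\pi>\rho$, which is possible because the left side tends to $\pi>\rho$. The hypothesis on $F_m$ supplies a $\delta>0$ and an $m_0$ for this $\varepsilon'$, and I would take $\delta_0\leq\min\big(\delta,\eta/((1+\varepsilon')\sigma)\big)$, so that $\varepsilon_m(i)\leq\eta$, hence $a\leq\eta$, throughout $i\leq\lfloor\delta_0 m\rfloor$. Finally I would choose $u>0$ small enough that $(1-\varepsilon')(1-\eta/2)\pi(1-e^{-u})>u\rho$; this is possible since the left side is equivalent to $(1-\varepsilon')(1-\eta/2)\pi\,u$ as $u\to 0$ and that coefficient exceeds $\rho$. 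Setting $c_0=(1-\varepsilon')(1-\eta/2)\pi(1-e^{-u})-u\rho>0$ yields the claim, with $c_0,\rho,\delta_0$ depending on $\pi$ and the fixed selection scheme but not on $m,\ell,p_C,p_M$.

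The main obstacle is the factor $2$ that must be recovered in the drift. A crude bound $1-q^2\geq\varepsilon_m(i)(1-e^{-u})$ only produces a drift of order $\tfrac12\pi i$, which fails to beat $\rho i$ when $\pi$ is close to $1$; one genuinely needs $1-q^2\approx 2\varepsilon_m(i)(1-e^{-u})$, and this forces $a$ (equivalently $\varepsilon_m(i)$) to be small. That is exactly why $\delta_0$ must be taken small: it keeps $F_m(m-i+1)+\cdots+F_m(m)\leq(1+\varepsilon')\sigma\delta_0$ small, so that the neglected quadratic term $a/2$ does not erode the factor $2$ below the threshold $\rho/\pi$. Everything else is routine; the only care needed is that the Laplace transform is finite, which it is since all the variables are bounded, and that the transition law does not depend on $n$, so that analysing a single step suffices.
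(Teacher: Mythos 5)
Your proof is correct, but it takes a genuinely different route from the paper's. You compute the Laplace transform of the compound sum exactly, $E\big(e^{-uN_{n+1}}\,\big|\,N_n=i\big)=\big(p_C+(1-p_C)q^2\big)^{m/2}$ with $q=1-\varepsilon_m(i)(1-e^{-u})$, and run a single Chernoff bound; the fluctuations of the crossover count $B_n$ are absorbed automatically into the generating function. The paper instead splits the event in two: it first controls $P\big(B_n\leq l(m,\varepsilon)\big)$ by Hoeffding's inequality, then, on the complementary event, decomposes $\sum_{k=1}^{2l(m,\varepsilon)}Y_k^i$ into $i$ blocks of size $b$ with $b\,\varepsilon_m(i)\geq\pi(1-\varepsilon)^2>\rho$, bounds each block's Cram\'er transform by that of a Poisson law of the same mean (lemma~\ref{bnpp}), and applies the Tchebytcheff exponential inequality. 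This produces two error terms, $\exp(-cm)$ and $\exp(-c_0i)$, which must then be recombined into a single $\exp(-c_0'i)$ using $i\leq\delta_0 m$ --- a step your one-shot bound avoids entirely. What your approach requires in exchange is the careful treatment of the quadratic correction in $1-q^2=a(2-a)$: you correctly identify that the crude bound $1-q^2\geq a$ loses the factor $2$ and would only give a drift of order $\pi i/2$, and that keeping $(1-a/2)$ close to $1$ is precisely what forces $\delta_0$ small. The paper's choice $\delta_0<\frac14(1-p_C)\varepsilon$ plays the same role through the block size. Both arguments use the hypothesis on $F_m$ in the same way to get $m(1-p_C)\varepsilon_m(i)\geq(1-\varepsilon')\pi i$, and both deliver constants depending only on $\pi$ (and the fixed selection scheme through $\sigma$). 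Your version is more elementary and self-contained; the paper's is structured to reuse the Poisson-comparison and Hoeffding lemmas already set up in the appendix.
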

\begin{proof}
We recall that,
conditionally on $N_n=i$, the law of $N_{n+1}$ is the same
as the law of the random variable
$$\sum_{k=1}^{2B_n}Y_k^i\,,$$
where 
the law of $B_n$ is the binomial law $\cB(m/2,1-p_C)$,
the variables
$Y_k^i$, $k\in\N$, $i\in\um$, are Bernoulli i.i.d. random variables
with parameter
$$\varepsilon_m(i)\,=\,
\big(F_m(m-i+1)+\cdots+F_m(m)\big)
(1-p_M)^\ell\,.$$
Let $\varepsilon>0$ be such that $\pi(1-\varepsilon)^2>1$ and let
$$l(m,\varepsilon)\,=\,
\Big\lfloor\frac{m}{2}(1-p_C)(1-\varepsilon)\Big\rfloor +1+
\frac{m}{4}(1-p_C)\varepsilon
\,.$$
For $m$ large enough, we have
$$l(m,\varepsilon)\,<\,
\frac{m}{2}(1-p_C)
\,.$$
Let $\rho>1$.
We write
\begin{align*}
P\big(\,
N_{n+1}<\rho i\,\big|\,N_n=i\,\big)
&\,=\,
P\Big(\sum_{k=1}^{2B_n}Y_k^i<\rho i\Big)\cr
&\,\leq\,
P\big(B_n\leq l(m,\varepsilon)\big)+
P\Big(\sum_{k=1}^{2l(m,\varepsilon)}Y_k^i<\rho i\Big)
\,.
\end{align*}
We control the first probability with the help of Hoeffding's inequality
(see appendix~\ref{fixed}).
The expected value of $B_n$ is
${m}(1-p_C)/2> l(m,\varepsilon)$, thus
$$P\big(B_n\leq l(m,\varepsilon)\big)
\,\leq\,
\exp\Big(
-\frac{2}{m}\Big(
\frac{m}{2}(1-p_C)-l(m,\varepsilon)\Big)^2\Big)
\,.
$$
Recall that $1-p_C>1/\sigma$. 
For $m$ large enough, we have
$$\frac{m}{2}(1-p_C)-l(m,\varepsilon)\,\geq\,
\frac{m}{2}(1-p_C)\frac{\varepsilon}{2}-1\,
\,\geq\,\frac{m\varepsilon}{4\sigma}-1\,
\,\geq\,\frac{m\varepsilon}{8\sigma}
\,.$$
It follows that
$$
P\big(B_n\leq l(m,\varepsilon)\big)
\,\leq\,
\exp\Big(
-\frac{m}{32}
\frac{
\varepsilon^2}{
\sigma^2}\Big)
\,.
$$
To control the second probability, we decompose
the sum into~$i$ blocks and we use the
Tchebytcheff exponential inequality. Each block
 follows a binomial law, and we bound the
Cram\'er transform of each block by
the Cram\'er transform of a Poisson law having
the same mean.
More precisely, we choose for the block size
$$
b\,=\,\Big\lfloor\frac{\displaystyle 2l(m,\varepsilon)
-\frac{m}{4}(1-p_C)\varepsilon
}{i}+1\Big\rfloor\,,$$
and we define
the sum associated to each block of size~$b$:
$$\forall j\in\{\,1,\dots,i\,\}\qquad
Y_j'\,=\,
\sum_{k=b(j-1)+1}^{bj}
Y_k^i\,.$$
Notice that 
$Y'_1$ follows the binomial law 
with parameters $b,\varepsilon_m(i)$.
We will next estimate from below
the product
$b\varepsilon_m(i)$. 
By the choice of~$b$ and $l$, we have
$$\displaylines{
b\,\geq\,\frac{1}{i}\Big(2l(m,\varepsilon)
-\frac{m}{4}(1-p_C)\varepsilon
\Big)\,,\cr
l(m,\varepsilon)\,\geq\,
\frac{m}{2}(1-p_C)
\Big(1-\frac{\varepsilon}{2}\Big)\,,
}$$
whence
$$b\,\geq\,\frac{m}{i}
(1-p_C)
\big(1-{\varepsilon}
\big)\,.$$
Let $\delta_0>0$ be such that
$$\delta_0\,<\,\frac{1}{4}(1-p_C)\varepsilon\,.$$
Let $m_0\geq 1$ be associated to $\varepsilon$
as in the hypothesis on $F_m$ (see section~\ref{genhy}).
We have, 
for $m\geq m_0$ and 
$i\in\big\{\,1,\dots,\lfloor\delta_0 m\rfloor\,\big\}$,
$$
\varepsilon_m(i)
\,\geq\,
\sigma(1-\varepsilon)
\frac{i}{m}
(1-p_M)^\ell\,
$$
and we conclude from the previous inequalities that
$$
b\varepsilon_m(i)
\,\geq\,
(1-p_C)(1-\varepsilon)^2
\sigma
(1-p_M)^\ell\,=\,\pi
(1-\varepsilon)^2
\,.
$$
We choose $\rho$ such that
$1<\rho<
\pi(1-\varepsilon)^2$, this implies in particular that
$$\rho\,<\,E(Y'_1)
\,=\,b\varepsilon_m(i)\,.$$
We have also that
\begin{align*}
bi\,&\leq\,
2l(m,\varepsilon)
-\frac{m}{4}(1-p_C)\varepsilon+i
\cr
&\,\leq\,
2l(m,\varepsilon)
-\frac{m}{4}(1-p_C)\varepsilon+\delta_0 m
\,\leq\,
2l(m,\varepsilon)\,.
\end{align*}
We have then, using Tchebytcheff exponential inequality
(see appendix~\ref{fixed}):
$$\displaylines{
P\Big(\sum_{k=1}^{2l(m,\varepsilon)}Y_k^i\leq\rho i\Big)
\,\leq\,
P\Big(\sum_{k=1}^{bi}Y_k^i\leq\rho i\Big)\hfill
\cr
\,\leq\,
P\Big(
\sum_{j=1}^{i}
Y_j' \leq\rho i\Big)
\,\leq\,
P\Big(
\sum_{j=1}^{i}
-Y_j' \geq-\rho i\Big)\,\leq\,
\exp\Big(-i\Lambda^*_{-Y'_1}(-\rho)\Big)
\,,
}$$
where
$\smash{\Lambda^*_{-Y'_1}}$ is the Cram\'er transform of $-Y'_1$.
Let $Y''_1$ be a random variable following the Poisson law of parameter
$b\varepsilon_m(i)$.
By lemma~\ref{bnpp}, we have
$$\Lambda^*_{-Y'_1}(-\rho)\,\geq\,
\Lambda^*_{-Y''_1}(-\rho)\,=\,
\rho\ln\Big(\frac{\rho}{b\varepsilon_m(i)}\Big)-\rho+
{b\varepsilon_m(i)}\,.$$
The map
$$\lambda\mapsto
\rho\ln\Big(\frac{\rho}{\lambda}\Big)-\rho+
{\lambda}\,$$
is non--decreasing on $[\rho,+\infty[$, thus
$$\Lambda^*_{-Y''_1}(-\rho)\,\geq\,
\rho\ln\Big(\frac{\rho}{
\pi(1-\varepsilon)^2
}\Big)-\rho+
{\pi(1-\varepsilon)^2}
\,.$$
Let us denote by $c_0$ the righthand quantity. Then $c_0$
is positive and it depends only on $\rho,\pi$ and $\varepsilon$.
Finally, we have
for $m\geq m_0$, 
$i\in\big\{\,1,\dots,\lfloor\delta_0 m\rfloor\,\big\}$,
$$P\Big(\sum_{k=1}^{2l(m,\varepsilon)}Y_k^i\leq\rho i\Big)
\,\leq\,\exp(-c_0i)\,$$
whence
$$
P\big(\,
N_{n+1}\leq\rho i\,\big|\,N_n=i\,\big)
\,\leq\,
\exp\Big( -\frac{m}{32}
\frac{ \varepsilon^2}{ \sigma^2}\Big)
\,+\,\exp(-c_0i)
\,.
$$
Let $\eta\in]0,1[$ be small enough so that
$$\exists\,m_1\quad\forall m\geq m_1
\qquad
\exp\Big( -\frac{m}{32}
\frac{ \varepsilon^2}{ \sigma^2}\Big)
\,\leq\,
\exp\Big( -\eta\frac{mc_0}{2}\Big)
\Big(1-
\exp\Big( -\eta\frac{c_0}{2}\Big)\Big)\,.$$
For $m\geq\max(m_0,m_1)$ and
$i\in\big\{\,1,\dots,\lfloor\delta_0 m\rfloor\,\big\}$,
we have
\begin{multline*}
P\big(\,
N_{n+1}\leq\rho i\,\big|\,N_n=i\,\big)\cr
\,\leq\,
\exp\Big( -\eta\frac{ic_0}{2}\Big)
\Big(1-
\exp\Big( -\eta\frac{c_0}{2}\Big)\Big)
+\exp\big( -\eta{ic_0}\big)\cr
\,\leq\,
\exp\big( -\eta\frac{ic_0}{2}\big)
\end{multline*}
and this inequality yields the claim of the proposition.
\end{proof}

\noindent
We define
$$\tau_0\,=\,\inf\,\big\{\,n\geq 1: N_n=0\,\big\}\,.
$$
For $\delta>0$, 
let $T({\delta})$ be the first time the process
$(N_n)_{n\geq 0}$
becomes larger than~$\delta m$:
$$T({\delta})\,=\,\inf\,\{\,n\geq 0: N_n\geq\delta m\,\}\,.$$
\begin{proposition}\label{sfj}
There exist $\delta_0>0$ and $p_0>0$ which depend only on $\pi$
such that
$$
\forall m\geq 1\qquad
P\big(T({\delta_0})\leq\kappa \ln m,\,\tau_0>T({\delta_0})
\,|\,N_0=1\big)\,\geq\,
p_0\,.$$
\end{proposition}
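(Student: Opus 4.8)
The plan is to exploit Proposition~\ref{crqa}, which says that while $N_n=i$ stays in the range $\{1,\dots,\lfloor\delta_0 m\rfloor\}$, the chain multiplies its value by at least $\rho>1$ in one step, except on an event of conditional probability at most $\exp(-c_0 i)$. Iterating this, the process started from $N_0=1$ should grow geometrically and reach level $\delta_0 m$ after about $\log_\rho(\delta_0 m)$ steps. Accordingly I fix $\delta_0,\rho,c_0$ as in Proposition~\ref{crqa}, set $T^\ast=\lceil\log_\rho(\delta_0 m)\rceil$, and choose the constant $\kappa$ (depending only on $\pi$, through $\rho$) large enough that $T^\ast\le\kappa\ln m$ for all $m$; the finitely many small values of $m$ for which the argument needs $m\ge m_0$ can be absorbed into $p_0$ separately, and when $\delta_0 m\le 1$ the event is automatic since $N_0=1\ge\delta_0 m$.

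First I would record the deterministic consequence of geometric growth. Writing $\tau=T(\delta_0)$ for the first hitting time of $\{N\ge\delta_0 m\}$, consider the event
$$A\;=\;\{\,N_{k+1}>\rho\,N_k\ \text{ for every } 0\le k<\min(T^\ast,\tau)\,\}.$$
On $A$ one shows by induction that $N_k\ge\rho^k$ for $0\le k\le\min(T^\ast,\tau)$; if $\tau>T^\ast$ this would force $N_{T^\ast}\ge\rho^{T^\ast}\ge\delta_0 m$, a contradiction, so in fact $\tau\le T^\ast$, and since $N_k\ge\rho^k>0$ for all $k\le\tau$ we also get $\tau_0>\tau$. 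Hence $A\subseteq\{T(\delta_0)\le T^\ast\le\kappa\ln m,\ \tau_0>T(\delta_0)\}$, and it suffices to bound $P(A\mid N_0=1)$ from below.

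The key step is to estimate $P(A)$ by a telescoping product rather than a union bound. Setting $A_k=\{N_{j+1}>\rho N_j \text{ for all } 0\le j<\min(k,\tau)\}$, so that $A=A_{T^\ast}$ and $A_0=\Omega$, I would write $P(A_{T^\ast})=\prod_{k=0}^{T^\ast-1}P(A_{k+1}\mid A_k)$. On the conditioning event $A_k\cap\{k<\tau\}$ one has simultaneously $N_k\ge\rho^k$ (growth so far) and $N_k<\delta_0 m$ (target not yet reached), so $N_k$ lies in the range where Proposition~\ref{crqa} applies; conditioning on the history up to time $k$ and using that $\exp(-c_0 i)\le\exp(-c_0\rho^k)$ for $i\ge\rho^k$ gives
$$P(A_{k+1}\mid A_k)\;\ge\;1-\exp(-c_0\rho^k).$$
Therefore
$$P(A\mid N_0=1)\;\ge\;\prod_{k=0}^{\infty}\bigl(1-\exp(-c_0\rho^k)\bigr)\;=:\;p_0,$$
and since $\rho>1$ the series $\sum_k\exp(-c_0\rho^k)$ converges, so this infinite product is strictly positive and depends only on $\pi$ (through $c_0$ and $\rho$). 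This proves the proposition.

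The main obstacle is precisely why one must use the product and not a union bound. The one-step failure probability is only $\exp(-c_0 i)$, and at the first step $i=1$ this equals $\exp(-c_0)$, which need not be small when $c_0$ is small; the summed failure bound $\sum_k\exp(-c_0\rho^k)$ may well exceed $1$, so a union bound would yield a vacuous estimate. The multiplicative structure rescues the argument: because the level grows geometrically, the failure probabilities decay super-geometrically and are summable, which is exactly the condition guaranteeing $\prod_k(1-\exp(-c_0\rho^k))>0$. The secondary technical points to get right are capping the growth requirement at the first hitting time $\tau$ (so that overshooting $\delta_0 m$, itself a success, does not spoil the telescoping and one never leaves the range of validity of Proposition~\ref{crqa}), together with the bookkeeping of the constant $\kappa$ and of the finitely many small values of $m$.
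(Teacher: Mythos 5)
Your proof is correct, and it reaches the same infinite--product lower bound as the paper but by a genuinely different decomposition. The paper indexes the good event by \emph{levels}: it introduces the hitting times $T_k=\inf\{n:N_n=k\}$, requires $N_{T_k+1}>\rho N_{T_k}$ for every $k\le\lfloor\delta_0m\rfloor$, and then must invoke lemma~\ref{indepday} (independence of the post--hitting--time states $X_{T(x_1)+1},\dots,X_{T(x_k)+1}$), which in turn forces an auxiliary modification of the chain because the state $0$ is absorbing and the chain is not irreducible; the resulting bound is $\prod_{k\ge1}\bigl(1-e^{-c_0k}\bigr)$. You instead index by \emph{time steps}, telescope $P(A_{T^*})=\prod_k P(A_{k+1}\mid A_k)$ using only the Markov property, and use the inductive lower bound $N_k\ge\rho^k$ on the good event to convert the level--dependent failure bound $e^{-c_0 i}$ of proposition~\ref{crqa} into the time--indexed bound $e^{-c_0\rho^k}$, yielding $\prod_{k\ge0}\bigl(1-e^{-c_0\rho^k}\bigr)$. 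Your route buys a cleaner argument: it avoids the independence lemma and the irreducibility repair entirely, and the capping of the growth requirement at $\min(T^*,\tau)$ correctly keeps the conditioning inside the range of validity of proposition~\ref{crqa}. The paper's route buys a slightly more transparent combinatorial picture (one factor per level actually visited) and reuses a lemma needed nowhere else. Both handle the finitely many small $m$ and the derivation of $\kappa$ from $T(\delta_0)\le 1+\ln(m\delta_0)/\ln\rho$ in essentially the same way, and both products are strictly positive because the failure probabilities are summable and each factor is strictly less than $1$.
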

\begin{proof}
Let $T_k$ be the first time the process
$(N_n)_{n\geq 0}$
hits~$k$:
$$T_k\,=\,\inf\,\{\,n\geq 0: N_n=k\,\}\,.$$
Let $\delta_0$, $\rho>1$, $c_0$, $m_0$
be as given in
proposition~\ref{crqa}.
We suppose that 
the process $(N_n)_{n\geq 0}$
starts from $N_0=1$.
Let $\cE$ be the event:
$$\cE\,=\,\big\{\,
\forall k\in\big\{\,1,\dots,\lfloor\delta_0 m\rfloor\,\big\}\quad
N_{T_k+1}>\rho N_{T_k}\,\big\}\,.$$
We claim that,
on the event~$\cE$, we have
$$\forall n\leq T({\delta_0})\qquad
N_{n+1}>\rho N_{n}\,.$$
Let us prove this inequality by induction on~$n$.
We have $T_0=0$, so that
$N_{1}>\rho N_{0}$
 and the inequality is true
for $n=0$. 
Suppose that the inequality has been proved until rank 
$n< T({\delta_0})$, so that
$$\forall k\leq n\qquad
N_{k+1}>\rho N_{k}\,.$$
This implies in particular that
$$N_0\,<\,N_1\,<\,\dots\,<\,N_n\,<\,m\delta_0\,.$$
Suppose that $N_n=i$. The above inequality implies that
$T_i=n$.
Therefore
$$N_{T_i+1}=N_{n+1}\,>\,\rho N_n$$
and the inequality still holds at rank $n+1$.
Iterating the inequality until time $T({\delta_0})-1$,
we see that
$$N_{T({\delta_0})-1}\,>\,\rho^{T({\delta_0})-1}\,.$$
Moreover
$N_{T({\delta_0})-1}\,\leq\,m\delta_0$, thus
$$T({\delta_0})\,\leq\,1+\frac{\ln(m\delta_0)}{\ln\rho}\,.$$
Let $m_1\geq 1$ and $\kappa>0$ such that
$$\forall m\geq m_1\qquad
1+\frac{\ln(m\delta_0)}{\ln\rho}\,\leq\,\kappa\ln m\,.$$
The constants $m_1,\kappa$ depend only on $\delta_0$ and $\rho$, and
we have 
$$P\big(T({\delta_0})\leq\kappa \ln m,\,\tau_0>T({\delta_0})
\,|\,N_0=1\big)\,\geq\,
P(\cE)\,.$$
By lemma~\ref{indepday}, the random variables
$N_{T_k+1}$, $k\leq\delta_0 m$, are independent. 
To be precise, we cannot directly apply 
lemma~\ref{indepday}, because the
Markov chain
$(N_n)_{n\geq 0}$
has an absorbing state at~$0$ and therefore it is not irreducible.
So we consider the modified
Markov chain
$(\smash{\widetilde{N}}_n)_{n\geq 0}$ which has the same transition
probabilities as
$(N_n)_{n\geq 0}$, except that we set the transition probability from $0$
to $1$ to be $1$. The event we wish to estimate in the lemma
has the same probability for both processes. Indeed, we require that
$T({\delta_0})\leq\kappa \ln m$ and $\tau_0>T({\delta_0})$, 
so that
the processes do not visit $0$ before
$T({\delta_0})$.
Using proposition~\ref{crqa}, we obtain, for $m$ larger than
$m_0$ and $m_1$,
\begin{align*}
P(\cE)\,&\geq\,
\prod_{k=1}^{\lfloor \delta_0 m\rfloor}
P\big(N_{T_k+1}>\rho N_{T_k})\cr
&\,=\,
\prod_{k=1}^{\lfloor \delta_0 m\rfloor}
\Big(1-P\big(\,
N_{1}\leq\rho k\,\big|\,N_0=k\,\big)\Big)\cr
&\,\geq\,
\prod_{k=1}^{\lfloor \delta_0 m\rfloor}
\Big(1- \exp(-c_0k) \Big)
\,\geq\,
\prod_{k=1}^{\infty}
\Big(1- \exp(-c_0k) \Big)
\,.
\end{align*}
The last infinite product is converging. 
Let us denote
its value by $p_1$. 
Let also
$$p_2\,=\,
\min\,\Big\{\,
P\big(T({\delta_0})\leq\kappa \ln m,\,\tau_0>T({\delta_0})\,|\,N_0=1\big)
:m\leq \max(m_0,m_1)\,\Big\}
.$$
The value $p_2$ is positive
and the inequality stated in the proposition
holds with $p_0=\min(p_1,p_2)$.
\end{proof}
\begin{lemma}
\label{lbd}
Let $\pi>1$ be fixed.
For any $\delta>0$, 
there exist $h\geq 1$, $c>0$, $m_0\geq 1$, which
depend only on $\delta$ and $\pi$,
such that:
for any set of parameters
$\ell,p_C,p_M$ satisfying
$\pi
\,=\,\sigma(1-p_C)(1-p_M)^\ell$,
we have,
for any $m\geq m_0$,
$$
P\big(N_1>0,
\dots,N_{h-1}>0,\,
N_h
>m(\rho^*-\delta)
\,|\,N_0=
{\lfloor m\delta\rfloor}
\big)\,\geq\,
1-\exp(-cm)
\,.$$
\end{lemma}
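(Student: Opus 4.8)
The plan is to show that, started from a macroscopic fraction $\delta$, the rescaled chain $N_n/m$ tracks the deterministic orbit $\phi^n(\delta)$, which converges monotonically to $\rho^*=\rho^*(\pi)$, and that any macroscopic downward deviation has exponentially small probability. I treat the main case $0<\delta<\rho^*$; the case $\delta\geq\rho^*$ follows from the stochastic monotonicity of the chain (the transition from $N_n=i$ has parameter $\varepsilon_m(i)$ non-decreasing in $i$) by comparison with a chain started from $\lfloor m\rho^*/2\rfloor\leq\lfloor m\delta\rfloor$.

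First I would fix the horizon $h$. Since $\phi$ is non-decreasing, concave, with $\phi(0)=0$, $\phi'(0)=\pi>1$ and unique positive fixed point $\rho^*$, the orbit $\phi^n(\delta)$ increases to $\rho^*$; hence there is an integer $h$, depending only on $\delta$ and $\pi$, with $\phi^h(\delta)>\rho^*-\delta$. The observation that simplifies the target event is that $0$ is absorbing for $(N_n)_{n\geq 0}$: since $\rho^*-\delta>0$, the event $\{N_h>m(\rho^*-\delta)\}$ forces $N_h\geq 1$, hence forces $N_k>0$ for every $k\leq h$. Thus the event to be bounded from below coincides with $\{N_h>m(\rho^*-\delta)\}$, and it suffices to prove that $P\big(N_h\leq m(\rho^*-\delta)\mid N_0=\lfloor m\delta\rfloor\big)\leq\exp(-cm)$.

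Here I would invoke the $h$-step large deviations upper bound of corollary~\ref{ppgd} with $s=\delta$ and $U=[0,\rho^*-\delta]$, which gives
$$\limsup_{m\to\infty}\frac{1}{m}\ln P\big(N_h\leq m(\rho^*-\delta)\,\big|\,N_0=\lfloor m\delta\rfloor\big)\,\leq\,-\inf_{t\in[0,\rho^*-\delta]}V_h(\delta,t)\,.$$
It then remains to check that $I_h:=\inf_{t\in[0,\rho^*-\delta]}V_h(\delta,t)>0$. The zero set of $V_h(\delta,\cdot)$ is exactly $\{\phi^h(\delta)\}$: a zero-cost path $\rho_0=\delta,\dots,\rho_h=t$ must satisfy $V_1(\rho_k,\rho_{k+1})=0$ at each step, and since $V_1(s,\cdot)$ vanishes only at $\phi(s)$ for $s>0$ (the computation preceding lemma~\ref{vpro}) while $\phi(s)>0$ whenever $s>0$, such a path is forced to be $\rho_k=\phi^k(\delta)$, ending at $\phi^h(\delta)$. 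By the choice of $h$ we have $\phi^h(\delta)\notin[0,\rho^*-\delta]$, so $V_h(\delta,t)>0$ for every $t$ in this interval.

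Finally, the infimum of a lower-semicontinuous function over a compact set is attained, hence positive. To conclude I would check that $V_h(\delta,\cdot)$ is lower-semicontinuous, extending lemma~\ref{lscp} to the $h$-step cost by the usual subsequence extraction: near-optimal paths live in the compact set $[0,1]^{h+1}$, and lower-semicontinuity of $V_1$ passes to the limit of the summed cost, exactly as in the proof of lemma~\ref{vpro}. The resulting $I_h$ is positive and depends only on $\delta$ and $\pi$ through $\phi$, $\rho^*$ and $h$; taking $c=I_h/2$ yields $P(N_h\leq m(\rho^*-\delta))\leq\exp(-cm)$ for all $m\geq m_0(\delta,\pi)$, which is the claim. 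The main obstacle is precisely this last point, namely establishing that the $h$-step cost $V_h(\delta,\cdot)$ is lower-semicontinuous with unique zero $\phi^h(\delta)$, so that its infimum over the compact set $[0,\rho^*-\delta]$ is strictly positive; everything else is bookkeeping around the already established large deviations bound.
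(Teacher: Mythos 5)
Your proof is correct, and it reaches the estimate by a genuinely different decomposition from the paper's. The paper does not collapse the event: by continuity of $\phi$ it chooses intermediate levels $\rho_0=\delta,\rho_1,\dots,\rho_h>0$ with $\phi(\rho_{k-1})>\rho_k$ and $\rho_h>\rho^*-\delta$, lower-bounds the target event by $\{\forall k,\ N_k\geq m\rho_k\}$, and controls the complement by a union bound over the first index at which the process drops below its level; each term is then reduced, via the Markov property and the monotonicity of the coupled chain, to a one-step probability $P(N_1<m\rho_k\,|\,N_0=\lfloor m\rho_{k-1}\rfloor)$, to which proposition~\ref{ldup} applies with a strictly negative rate because the unique zero $\phi(\rho_{k-1})$ of $V_1(\rho_{k-1},\cdot)$ lies above $\rho_k$. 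That route only ever needs the lower semicontinuity of $V_1$, which is lemma~\ref{lscp}. Your route instead observes that $0$ is absorbing, so the whole event reduces to $\{N_h>m(\rho^*-\delta)\}$, and applies the $h$-step bound of corollary~\ref{ppgd} directly; the price is that you must establish lower semicontinuity of $V_h(\delta,\cdot)$ and identify its unique zero as $\phi^h(\delta)$, neither of which is stated in the paper, though your compactness-and-subsequence sketch (the same device as in the bounded-length case of the proof of lemma~\ref{vpro}) does supply both. The absorbing-state observation is a genuine simplification of the combinatorics, and your explicit treatment of $\delta\geq\rho^*$ by monotone comparison covers an edge case the paper passes over silently; the paper's version is more economical in that it leans only on the one-step rate function, for which all the needed regularity has already been proved.
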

\begin{proof}
Let $\delta>0$. 
The sequence $(\phi^n(\delta))_{n\in\mathbb N}$  converges to $\rho^*$,
thus
there exists $h\geq 1$ such that
$\phi^h(\delta)>\rho^*-\delta$.
By continuity of the map $\phi$, there exist
$\rho_0,\rho_1,\dots,\rho_h>0$ such that 
$\rho_0=\delta$, 
$\rho_h>\rho^*-\delta$ and
$$\forall k\in\{\,1,\dots,h\,\}\qquad
\phi(\rho_{k-1})\,>\,\rho_k\,.$$
Now, 
\begin{multline*}
P\big(N_1>0,
\dots,N_{h-1}>0,\,
N_h
>m(\rho^*-\delta)
\,|\,N_0=
{\lfloor m\delta\rfloor}
\big)\,\geq\,\cr
P\Big(
\forall k\in\{\,1,\dots,h\,\}\quad
N_k\geq 
m\rho_{k}
\,|\,N_0=
{\lfloor m\delta\rfloor}
\Big)
\,.
\end{multline*}
Passing to the complementary event, we have
$$\displaylines{
P\big(\exists k\in\{\,1,\dots,h-1\,\}\quad
N_k=0
\quad
\text{or}
\quad
N_h
\leq m(\rho^*-\delta)
\,|\,N_0=
{\lfloor m\delta\rfloor}
\big)
\cr
\,\leq\,
P\Big(
\exists k\in\{\,1,\dots,h\,\}\quad
N_k< 
m\rho_{k}
\,|\,N_0=
{\lfloor m\delta\rfloor}
\Big)\cr
\,\leq\,
\sum_{1\leq k\leq h}
P\Big(
N_{1} \geq
m\rho_{1},\dots,
N_{k-1} \geq
m\rho_{k-1},\,
N_k< 
m\rho_{k}
\,|\,N_0=
{\lfloor m\delta\rfloor}
\Big)\cr
\,\leq\,
\sum_{1\leq k\leq h}
\sum_{i\geq
m\rho_{k-1}
}
P\Big(
N_{k-1} =i,\,
N_k< 
m\rho_{k}
\,|\,N_0=
{\lfloor m\delta\rfloor}
\Big)\cr
\,\leq\,
\sum_{1\leq k\leq h}
\sum_{i\geq
m\rho_{k-1}
}
P\Big(
N_k< 
m\rho_{k}
\,|\,
N_{k-1} =i
\Big)\,
P\Big(
N_{k-1} =i
\,|\,N_0=
{\lfloor m\delta\rfloor}
\Big)\,
\cr
\,\leq\,
\sum_{1\leq k\leq h}
P\Big(
N_1< 
m\rho_{k}
\,|\,
N_{0} =
{\lfloor 
m\rho_{k-1}
\rfloor}
\Big)
\,.
}$$
The large deviations upper bound for the transition probabilities of 
the Markov chain
$(N_n)_{n\geq 0}$ 
stated in
proposition~\ref{ldup}
implies that
\begin{multline*}
\forall k\in\{\,1,\dots,h\,\}\hfill\cr
 \limsup_{m\to\infty}\,\frac{1}{m}\ln
P\Big(
N_1< 
m\rho_{k}
\,|\,
N_{0} =
{\lfloor 
m\rho_{k-1}
\rfloor}
\Big)\cr
\,\leq\,
-\inf\,\Big\{\,
V_1\big(\rho_{k-1},t
 \big)
:t\leq
\rho_{k}
\,\Big\}
\,<\,0\,.
\end{multline*}
Since $h$ is fixed, we conclude that
$$
 \limsup_{m\to\infty}\,\frac{1}{m}\ln
P\bigg(
\begin{matrix}
\exists k\in\{\,1,\dots,h-1\,\}\quad
N_k=0\\
\text{or}
\quad
N_h
\leq m(\rho^*-\delta)
\end{matrix}
\,\Big|\,N_0=
{\lfloor m\delta\rfloor}
\bigg)\,<\,0
$$
and this yields the desired estimate.
\end{proof}

\noindent
With the estimate of lemma~\ref{lbd}, we show that the process
is very unlikely to stay a long time in
$[m\delta,
m(\rho^*-\delta)]$.
\begin{corollary}
\label{cbd}
Let $\pi>1$ be fixed.
For any $\delta>0$, 
there exist $h\geq 1$, $c>0$, $m_0\geq 1$, which
depend only on $\delta$ and $\pi$,
such that:
for any set of parameters
$\ell,p_C,p_M$ satisfying
$\pi
\,=\,\sigma(1-p_C)(1-p_M)^\ell$,
we have,
for any $m\geq m_0$,
$$\displaylines{
\forall k\in [m\delta,
m(\rho^*-\delta)]\quad\forall n\geq 0\qquad\hfill\cr
P\Big(
m\delta\leq
N_t\leq
m(\rho^*-\delta)
\text{ for }0\leq t\leq n
\,|\,N_0=k
\Big)\,\leq\,
\exp\Big(-cm\Big\lfloor
\frac{n}{h}
\Big\rfloor\Big)
\,.
}$$
\end{corollary}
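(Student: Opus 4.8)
The plan is to deduce Corollary~\ref{cbd} from Lemma~\ref{lbd} together with the monotonicity of the chain, by cutting the time interval $[0,n]$ into $\lfloor n/h\rfloor$ consecutive blocks of length~$h$ and applying the one-step escape estimate on each block. Throughout I take $h,c,m_0$ to be the constants produced by Lemma~\ref{lbd} for the given $\delta$ and $\pi$.

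The first ingredient I would record is the monotone coupling of the chain. Since $N_{n+1}=\Psi_n(N_n)$ and the map $i\mapsto\Psi_n(i)$ is non--decreasing, two copies of the chain driven by the same maps $(\Psi_n)_{n\geq 0}$ and started from $k_1\leq k_2$ satisfy $N_t^{(k_1)}\leq N_t^{(k_2)}$ for every $t\geq 0$, by a trivial induction on~$t$. Using this, I establish a one--block estimate valid uniformly over all starting points of the interval: for every $m\geq m_0$ and every $k\in[m\delta,m(\rho^*-\delta)]$,
$$P\big(m\delta\leq N_t\leq m(\rho^*-\delta)\text{ for }0\leq t\leq h\,\big|\,N_0=k\big)\,\leq\,\exp(-cm)\,.$$
Indeed, the event on the left forces $N_h\leq m(\rho^*-\delta)$. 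As $k\geq m\delta\geq\lfloor m\delta\rfloor$, the monotone coupling gives $N_h^{(\lfloor m\delta\rfloor)}\leq N_h^{(k)}\leq m(\rho^*-\delta)$, so this event is contained in $\{N_h^{(\lfloor m\delta\rfloor)}\leq m(\rho^*-\delta)\}$, which lies in the complement of the event $\{N_1>0,\dots,N_{h-1}>0,\ N_h>m(\rho^*-\delta)\}$ of Lemma~\ref{lbd}. Since that event has probability at least $1-\exp(-cm)$, the one--block estimate follows.

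Finally I iterate over the blocks by the Markov property. Set $q=\lfloor n/h\rfloor$ and let $A_j$ be the event that $N_t\in[m\delta,m(\rho^*-\delta)]$ for all $jh\leq t\leq(j+1)h$. Since $qh\leq n$, the event that the chain stays in $[m\delta,m(\rho^*-\delta)]$ throughout $[0,n]$ is contained in $A_0\cap\cdots\cap A_{q-1}$. Conditioning on $N_0,\dots,N_{(q-1)h}$ and using that the chain is time--homogeneous, on $A_0\cap\cdots\cap A_{q-2}$ we have $N_{(q-1)h}\in[m\delta,m(\rho^*-\delta)]$, so the one--block estimate yields $P\big(A_{q-1}\mid N_0,\dots,N_{(q-1)h}\big)\leq\exp(-cm)$. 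Peeling off the blocks one at a time gives
$$P\big(A_0\cap\cdots\cap A_{q-1}\,\big|\,N_0=k\big)\,\leq\,\exp(-cmq)\,=\,\exp\Big(-cm\Big\lfloor\frac{n}{h}\Big\rfloor\Big)\,,$$
which is the asserted bound. The only genuinely delicate point is that Lemma~\ref{lbd} is stated for the single starting value $\lfloor m\delta\rfloor$, whereas the block iteration requires the one--block estimate to hold uniformly over every state the chain can occupy inside $[m\delta,m(\rho^*-\delta)]$; this uniformity is precisely what the monotone coupling supplies, and it is the step I would check most carefully.
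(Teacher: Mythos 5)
Your proof is correct and follows essentially the same route as the paper: cut $[0,n]$ into blocks of length $h$, use the Markov property to peel off one block at a time, and reduce the one-block escape estimate to the single starting point $\lfloor m\delta\rfloor$ of Lemma~\ref{lbd} via the monotonicity of the chain (the paper performs this last comparison implicitly, relying on the fact that $i\mapsto\Psi_n(i)$ is non--decreasing, whereas you spell out the monotone coupling). No gap.
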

\begin{proof}
Let 
$k\in [m\delta,
m(\rho^*-\delta)]$.
Let $\delta>0$ and 
let $h\geq 1$ and $c>0$ be associated
to $\delta$ as in
lemma~\ref{lbd}. 
We divide the interval $\{\,0,\dots,n\,\}$ into subintervals of length $h$ and
we use repeatedly the estimate of lemma~\ref{lbd}. 
Let $i\geq 0$. We write
$$\displaylines{
P\big( m\delta\leq  N_t\leq m(\rho^*-\delta)
\text{ for }0\leq t\leq (i+1)h
\,|\,N_0=k \big)
\, =\,
\hfill
\cr
\sum_{\delta m\leq j
\leq (\rho^*-\delta)m
}
\kern-7pt
P\big( m\delta\leq  N_t\leq m(\rho^*-\delta)
\text{ for }0\leq t\leq (i+1)h,\,N_{ih}=j
\,|\,N_0=k \big)
\cr
\,=\,\sum_{\delta m\leq j
\leq (\rho^*-\delta)m
}
\kern-7pt
P\big( m\delta\leq  N_t\leq m(\rho^*-\delta)
\text{ for }0\leq t\leq ih,\,N_{ih}=j
\,|\,N_0=k \big)
\cr
\phantom{\sum_{\delta m\leq j
\leq (\rho^*-\delta)m
}}
\hfill\times
P\big( m\delta\leq  N_t\leq m(\rho^*-\delta)
\text{ for }ih\leq t\leq (i+1)h
\,|\,
N_{ih}=j
\big)\cr
\,\leq\,
\sum_{\delta m\leq j
\leq (\rho^*-\delta)m
}
\kern-7pt
P\big( m\delta\leq  N_t\leq m(\rho^*-\delta)
\text{ for }0\leq t\leq ih,\,N_{ih}=j
\,|\,N_0=k \big)
\cr
\phantom{\sum_{\delta m\leq j
\leq (\rho^*-\delta)m
}}
\hfill\times
P\big(N_h
\leq m(\rho^*-\delta)
\,|\,N_0=
{\lfloor m\delta\rfloor}
\big)\cr
\,\leq\,
P\big( m\delta\leq  N_t\leq m(\rho^*-\delta)
\text{ for }0\leq t\leq ih
\,|\,N_0=k \big)
\exp(-cm)
\,.
}$$
Iterating this inequality, we obtain
$$\forall i\geq 0\qquad\!
P\big( m\delta\leq  N_t\leq m(\rho^*-\delta)
\text{ for }0\leq t\leq ih
\,|\,N_0=k \big)
\,\leq\,
\exp(-cmi)
\,.
$$
The claim of the corollary follows by applying this inequality with
$i$ equal to the integer part of $n/h$.
\end{proof}

\subsection{The catastrophe}
We have computed the relevant estimates to reach the neighborhood
of $\rho^*$. Our next goal is to study the hitting 
time $\tau_0$ starting from a neighborhood of $\rho^*$.
Since we need only a lower bound, we shall study the hitting time
of a neighborhood of $0$.
For $\delta>0$, we define
$$\tau_\delta\,=\,\inf\,\big\{\,n\geq 0: N_{n}<m\delta\,\big\}\,.$$
\begin{proposition}\label{cghy}
Let $\pi>1$ be fixed.
For any $\delta>0$, 
there exists $m_0\geq 1$, which
depend only on $\delta$ and $\pi$,
such that:
for any set of parameters
$\ell,p_C,p_M$ satisfying
$\pi
\,=\,\sigma(1-p_C)(1-p_M)^\ell$,
we have
$$\displaylines{
\forall m\geq m_0\qquad
\forall i \geq \lfloor(\rho^*-\delta)m\rfloor\qquad
\forall n\geq 1\hfill\cr
P\big(\tde\leq n
\,|\,N_0=i
\big)
\,\leq\, n
\exp\big(-mV(\rho^*-\delta,\delta)+m\delta\big)
\,.}$$
\end{proposition}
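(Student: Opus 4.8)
The plan is to combine the stochastic monotonicity of the chain $(N_n)_{n\ge0}$ with a union bound over the time at which the barrier $m\delta$ is crossed, and then to charge the full descent cost $V(\rho^*-\delta,\delta)$ to each term through the large deviations upper bound of Corollary~\ref{ppgd}.

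First I would reduce to the extremal starting point. The coupling map $\Psi_n$ is non-decreasing (as recorded when $(N_n)_{n\ge0}$ was defined), so two chains started from $i\le i'$ and driven by the same random inputs satisfy $N_n(t,i)\le N_n(t,i')$ for every $n$; hence the lower chain hits $m\delta$ no later, and for all $i\ge\lfloor(\rho^*-\delta)m\rfloor$,
$$P\big(\tau_\delta\le n\mid N_0=i\big)\,\le\,P\big(\tau_\delta\le n\mid N_0=\lfloor(\rho^*-\delta)m\rfloor\big)\,.$$
Writing $s=\rho^*-\delta$ (and noting $\tau_\delta\ge1$ since $s>\delta$ for $\delta<\rho^*/2$), a union bound gives
$$P\big(\tau_\delta\le n\mid N_0=\lfloor sm\rfloor\big)\,\le\,\sum_{k=1}^n P\big(\tau_\delta=k\mid N_0=\lfloor sm\rfloor\big)\,\le\,\sum_{k=1}^n P\big(N_k<m\delta\mid N_0=\lfloor sm\rfloor\big)\,,$$
because $\{\tau_\delta=k\}\subseteq\{N_k<m\delta\}$. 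It then suffices to bound each summand by $\exp(-mV(s,\delta)+m\delta)$.

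For a fixed number of steps $k$, the $k$-step large deviations upper bound of Corollary~\ref{ppgd} applied with $U=[0,\delta)$ yields an estimate governed by $\inf_{t\in[0,\delta)}V_k(s,t)$. Two facts reduce this to $V(s,\delta)$: first, $V_k(s,t)\ge V(s,t)$ by the very definition $V=\inf_l V_l$; second, for $\delta<\rho^*/2$ every target $t<\delta$ lies below the mean $\phi(s)$, so the single-step cost $V_1(s,\cdot)$ is non-increasing there, and the analogous monotonicity of $V(s,\cdot)$ on $[0,\delta]$ propagates through the concavity of $\phi$ and the characterization of Lemma~\ref{vpro}, giving $\inf_{t<\delta}V(s,t)\ge V(s,\delta)$ (using also the lower semicontinuity of $V$). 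The polynomial prefactors $(m+1)^2\exp(4\ln m+6)$ and the vanishing continuity correction $\frac{2m}{\gamma}\Delta(s,m)$ present in that bound are absorbed into the slack $\exp(m\delta)$ once $m\ge m_0(\delta,\pi)$.

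The delicate point, and the step I expect to be the \emph{main obstacle}, is that Corollary~\ref{ppgd} is an asymptotic statement whose hidden prefactor grows with the number of steps $k$: summing over the $(m+1)^{k-1}$ intermediate states produces an entropy factor $\mathrm{poly}(m)^k$, whereas the stated bound must hold, for a fixed large $m$, uniformly over all $k\le n$ with $n$ possibly exponential in $m$. The strong drift of $\phi$ towards the stable fixed point $\rho^*$ is what saves the day: a path realizing $\tau_\delta=k$ must, near its end, descend from a level $\ge m(\rho^*-\delta)$ across $m\delta$ while confined to the band $[m\delta,m(\rho^*-\delta))$, and the long initial sojourn near $\rho^*$ contributes no cost. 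I would therefore split at the last time $\theta$ the chain exceeds $m(\rho^*-\delta)$ and bound the terminal band-descent of length $k-\theta$ by combining a bounded-window version of Corollary~\ref{ppgd} (which supplies the full cost $V(s,\delta)$) with the exponential confinement estimate of Corollary~\ref{cbd} (which makes the sums over $\theta$ and over $k-\theta$ converge), thereby obtaining the per-$k$ bound $\exp(-mV(s,\delta)+m\delta)$ uniformly in $k$. Summing over $k\le n$ produces the factor $n$ and completes the proof.
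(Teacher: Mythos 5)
Your proposal is correct and, in its final form, follows essentially the same route as the paper: after correctly diagnosing that the naive union bound plus Corollary~\ref{ppgd} cannot work when the number of steps grows with $n$, you land exactly on the paper's argument, namely decomposing at the last time the chain exceeds $m(\rho^*-\delta)$, controlling long excursions confined to the band $[m\delta,m(\rho^*-\delta)]$ via Corollary~\ref{cbd}, and controlling the short terminal descent of bounded length via the fixed-window large deviations bound of Corollary~\ref{ppgd}. The monotonicity reduction to the extremal starting point is a harmless variant of the paper's summation over the intermediate state.
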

\begin{proof}
Let $i \geq\lfloor (\rho^*-\delta)m\rfloor$. 
The strategy consists in looking 
at the portion of the
trajectory starting at the last visit to the neighborhood of
$\rho^*$ before reaching the neighborhood of~$0$.
Accordingly,
we define
$$S\,=\,\max\,\big\{\,n\leq \tau_\delta: N_{n}> (\rho^*-\delta)m\,\big\}\,.$$
Notice that $S$ is not a Markov time.
We write, for $n,k\geq 1$,
$$\displaylines{
P\big(\tde\leq n
\,|\,N_0=i
\big)\,=\,
\sum_{1\leq s<t\leq n}
P\big(
\tde=t,\,S=s
\,|\,
N_{0} =i\,\big)\cr
\,=\,
\sum_{
\genfrac{}{}{0pt}{1}
{1\leq s<t\leq n}
{s<t\leq s+k }}
P\big(
\tde=t,\,S=s
\,|\,
N_{0} =i\,\big)+
\!
\sum_{
\genfrac{}{}{0pt}{1}
{1\leq s< n}
{s+k<t\leq n }}
\!
P\big(
\tde=t,\,S=s
\,|\,
N_{0} =i\,\big)
\,.
}$$
Let $h\geq 1$ and $c>0$ be associated
to $\delta$ as in
corollary~\ref{cbd}. 
For
${1\leq s< n}$ and $t>s+k$,
\begin{multline*}
P\big(
\tde=t,\,S=s
\,|\,
N_{0} =i\,\big)\cr
=
\sum_{m\delta\leq j
\leq (\rho^*-\delta)m
}
P\big(
\tde=t,\,S=s,\,N_{s+1}=j
\,|\,
N_{0} =i\,\big)
\cr
\leq\sum_{m\delta\leq j
\leq (\rho^*-\delta)m
}
P\Big(
\begin{matrix}
\delta m\leq
N_r\leq
(\rho^*-\delta)m\\
 \text{ for }s+1\leq r\leq t-1
\end{matrix}
\,\bigg|\,
N_{s+1}=j
\,\Big)
\cr
\,\leq\,m
\exp\Big(-cm\Big\lfloor
\frac{t-s-2}{h}
\Big\rfloor\Big)
\,,
\end{multline*}
whence
$$\sum_{
\genfrac{}{}{0pt}{1}
{1\leq s< n}
{s+k<t\leq n }}
P\big(
\tde=t,\,S=s
\,|\,
N_{0} =i\,\big)
\,\leq\,
n 
\sum_{t\geq k}
m\exp\Big(-cm\Big\lfloor
\frac{t-1}{h}
\Big\rfloor\Big)
\,.
$$
For
${1\leq s<t\leq n}$
 and $t\leq s+k$,
\begin{multline*}
P\big(
\tde=t,\,S=s
\,|\,
N_{0} =i\,\big)
\cr
\leq
\sum_{j
> (\rho^*-\delta)m
}
P\big(
\tde=t,\,S=s,\,N_s=j
\,|\,
N_{0} =i\,\big)
\cr
\leq
\sum_{j
> (\rho^*-\delta)m
}
P\big(
N_t<\delta m
\,|\,
N_{s} =j\,\big)
\cr
\,\leq\, m
P\big(
N_{t-s}<\delta m
\,|\,
N_{0} =
{\lfloor 
(\rho^*-\delta)m
\rfloor}
\,\big)
\,,
\end{multline*}
whence
$$\displaylines{
\smash{\sum_{
\genfrac{}{}{0pt}{1}
{1\leq s< n}
{s<t\leq s+k }}
}
P\big(
\tde=t,\,S=s
\,|\,
N_{0} =i\,\big)
\,\leq\, 
\hfill\cr
\hfill
n\sum_{1\leq t\leq k}
m
P\big(
N_{t}<\delta m
\,|\,
N_{0} =
{\lfloor 
(\rho^*-\delta)m
\rfloor}
\,\big)
\,.
}$$
Putting together the previous inequalities, we obtain
\begin{multline*}
P\big(\tde\leq n
\,|\,N_0=i
\big)
\,\leq\, 
n 
\sum_{t\geq k}
m\exp\Big(-cm\Big\lfloor
\frac{t-1}{h}
\Big\rfloor\Big)
\cr\hfill
+
n\sum_{1\leq t\leq k}
m
P\big(
N_{t}<\delta m
\,|\,
N_{0} =
{\lfloor 
(\rho^*-\delta)m
\rfloor}
\,\big)
\,.
\end{multline*}
We choose $k$ large enough so that
$$ \limsup_{m\to\infty}\,\frac{1}{m}\ln
\bigg(
\sum_{t\geq k}
m\exp\Big(-cm\Big\lfloor
\frac{t-1}{h}
\Big\rfloor\Big)\bigg)
\,<\,-V(\rho^*-\delta,\delta)
\,,$$
and we use the large deviations upper bound stated in 
corollary~\ref{ppgd}
to estimate the second sum: 
$$\displaylines{
 \limsup_{m\to\infty}\,\frac{1}{m}\ln
\bigg(
\sum_{1\leq t\leq k}
m
P\big(
N_{t}<\delta m
\,|\,
N_{0} =
{\lfloor 
(\rho^*-\delta)m
\rfloor}
\,\big)
\bigg)\cr
\hfill
\,\leq\,-
\min_{1\leq t\leq k}
V_t(\rho^*-\delta,\delta)
\,\leq\,-V(\rho^*-\delta,\delta)
\,.}$$
Therefore there exists $m_0\geq 1$ such that,
$$\forall m\geq m_0\qquad
P\big(\tde\leq n
\,|\,N_0=i
\big)
\,\leq\, 
n\exp\big(-mV(\rho^*-\delta,\delta)+m\delta\big)
\,.$$
This proves the proposition.
\end{proof}
\begin{lemma}\label{appov}
Let $V^*<V(\rho^*,0)$. There exists $\delta>0$ such that
$$V(\rho^*-\delta,\delta)-2\delta
\,\geq\,V^*\,.$$
\end{lemma}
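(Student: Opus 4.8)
The plan is to reduce the claim to a lower-semicontinuity statement at the corner $(\rho^*,0)$, established through a triangle inequality for the cost function $V$. First I would record that $V$ satisfies
$$\forall a,b,c\in[0,1]\qquad V(a,c)\,\le\,V(a,b)+V(b,c)\,.$$
This is immediate from the definition of $V=\inf_{l}V_l$: given admissible chains realizing $V(a,b)$ and $V(b,c)$ up to $\ve$, their concatenation is an admissible chain from $a$ to $c$ whose total $V_1$-cost is the sum of the two, so $V(a,c)$ is bounded by $V(a,b)+V(b,c)+2\ve$ for every $\ve>0$. Applying this twice, with the intermediate points $\rho^*-\delta$ and $\delta$, yields
$$V(\rho^*,0)\,\le\,V(\rho^*,\rho^*-\delta)+V(\rho^*-\delta,\delta)+V(\delta,0)\,,$$
and therefore
$$V(\rho^*-\delta,\delta)-2\delta\,\ge\,V(\rho^*,0)-V(\rho^*,\rho^*-\delta)-V(\delta,0)-2\delta\,.$$
It then suffices to prove that the two correction terms $V(\rho^*,\rho^*-\delta)$ and $V(\delta,0)$ tend to $0$ as $\delta\to 0^+$: the right-hand side converges to $V(\rho^*,0)>V^*$, so any sufficiently small $\delta>0$ makes it $\ge V^*$, which is exactly the assertion.

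For the first correction term I would bound $V(\rho^*,\rho^*-\delta)\le V_1(\rho^*,\rho^*-\delta)$ and invoke the continuity of $t\mapsto V_1(\rho^*,t)$ at $t=\rho^*$, which was already established inside the proof of lemma~\ref{vpro}: using the representation of $V_1(\rho^*,\rho^*)=0$ with optimizers $p^*,\beta^*$ and the continuity of $t\mapsto\beta^* I\big((1-F(1-\rho^*))\pi/(\sigma(1-p^*)),\,t/\beta^*\big)$ at $\rho^*$, one gets $V_1(\rho^*,\rho^*-\delta)\to 0$. For the second term I would bound $V(\delta,0)\le V_1(\delta,0)$ and insert the admissible pair $p=0,\ \beta=1$ (allowed since $\pi\le\sigma$ and $0\le 1$) into the definition of $V_1$: the first contribution is $\tfrac12 I(1,1)=0$, and the second is $I\big((1-F(1-\delta))\pi/\sigma,\,0\big)=-\ln\big(1-(1-F(1-\delta))\pi/\sigma\big)$, which tends to $0$ because $F$ is continuous with $F(1)=1$, so $1-F(1-\delta)\to 0$.

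Combining the three displays and letting $\delta$ shrink gives a $\delta>0$ with $V(\rho^*-\delta,\delta)-2\delta\ge V^*$, completing the proof. The delicate point, and the step I would treat most carefully, is the vanishing of the two correction terms, since it requires controlling the rate function $I$ near the boundary of its domain, where $I(p,\cdot)$ jumps to $+\infty$. For $V(\rho^*,\rho^*-\delta)$ this is precisely the continuity already needed in lemma~\ref{vpro}, so I would simply reuse it; for $V(\delta,0)$ the explicit choice $(p,\beta)=(0,1)$ is designed to keep both arguments of $I$ safely in the interior (the pair $(1,1)$ in the first factor and a first argument tending to $0$ in the second), thereby avoiding the singular set entirely.
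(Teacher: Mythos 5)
Your proposal is correct and follows essentially the same route as the paper: the same triangle inequality $V(\rho^*,0)\le V(\rho^*,\rho^*-\delta)+V(\rho^*-\delta,\delta)+V(\delta,0)$ and the same verification that both correction terms vanish as $\delta\to 0^+$ (the paper bounds the first by $I(\rho^*,\rho^*-\delta)$ via the choice $p=0$, $\beta=1$ and the identity $\phi(\rho^*)=\rho^*$, which is the same computation you delegate to the continuity argument in lemma~\ref{vpro}). Your bound $-\ln\bigl(1-(1-F(1-\delta))\pi/\sigma\bigr)$ for $V(\delta,0)$ is in fact the more carefully stated version of the paper's, which drops the factor $\pi$.
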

\begin{proof}
Let $V^*<V(\rho^*,0)$. Let $\varepsilon>0$ be such that
$V(\rho^*,0)-4\varepsilon>V^*$.
For $\delta>0$, 
we have
$$V(\rho^*,0)\,\leq\,
V(\rho^*,\rho^*-\delta)+
V(\rho^*-\delta,\delta)+V(\delta,0)\,.$$
We bound next
$V(\rho^*,\rho^*-\delta)$ and $V(\delta,0)$:
$$\displaylines{
V(\rho^*,\rho^*-\delta)\,\leq\,
I\Big(
\frac{
\big(1-F(1-\rho^*)\big)
\pi}{\sigma},
{\rho^*-\delta}\Big)\,=\,
I\big(
{
\rho^*
},
{\rho^*-\delta}\big)
\,,\cr
V(\delta,0)\,\leq\,
I\Big(
\frac{
\big(1-F(1-\delta)\big)
}{\sigma},0\Big)\,\leq\,
-\ln\Big(1-
\frac{
\big(1-F(1-\delta)\big)
}{\sigma}\Big)\,,
}$$
and the righthand terms go to~$0$ when~$\delta$ goes to~$0$.
Thus we can choose $\delta>0$ such that
$$\delta\,<\,\varepsilon\,,\qquad
V(\delta,0)\,<\,\varepsilon\,,\qquad
V(\rho^*,\rho^*-\delta)\,<\,\varepsilon\,.$$
We have then
$$V(\rho^*-\delta,\delta)-2\delta\,\geq\,
V(\rho^*,0)
-2\delta-2\varepsilon\,\geq\,V^*\,$$
and the lemma is proved.
\end{proof}
\begin{corollary}\label{ghy}
For any $V^*<V(\rho^*,0)$, there exist 
$\delta>0$ and
$m_0\geq 1$ such that
$$\displaylines{
\forall m\geq m_0\qquad
P\big(\tde>
\exp(mV^*)
\,|\,N_0=
\lfloor(\rho^*-\delta)m\rfloor
\big)
\,\geq\, 1-\exp(-m\delta)\,.}$$
\end{corollary}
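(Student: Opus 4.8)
The plan is to obtain the corollary by combining the two immediately preceding results, Lemma~\ref{appov} and Proposition~\ref{cghy}, whose conjunction yields the claim almost at once. First I would fix $V^*<V(\rho^*,0)$ and apply Lemma~\ref{appov} to select a $\delta>0$ (depending only on $V^*$ and $\pi$, and in particular small enough that $\delta<\rho^*/2$) with the property $V(\rho^*-\delta,\delta)-2\delta\geq V^*$. This is the only place where the hypothesis $V^*<V(\rho^*,0)$ enters: it guarantees that the cost $V(\rho^*-\delta,\delta)$ of escaping from the neighborhood of $\rho^*$ down to the level $m\delta$ strictly exceeds $V^*$, with a margin of at least $2\delta$ to spare.

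Next I would apply Proposition~\ref{cghy} with this $\delta$, with the starting level $i=\lfloor(\rho^*-\delta)m\rfloor$, and with $n=\lfloor\exp(mV^*)\rfloor$. I may assume $V^*\geq 0$ (for $V^*<0$ the conclusion is immediate, since for $m$ large $N_0=\lfloor(\rho^*-\delta)m\rfloor\geq m\delta$ forces $\tde\geq 1>\exp(mV^*)$), so that $\lfloor\exp(mV^*)\rfloor\geq 1$ and the choice of $n$ is admissible. Since $\tde$ is integer valued, the event $\{\tde>\exp(mV^*)\}$ is exactly the complement of $\{\tde\leq\lfloor\exp(mV^*)\rfloor\}$, and Proposition~\ref{cghy} gives
$$P\big(\tde\leq\exp(mV^*)\,\big|\,N_0=\lfloor(\rho^*-\delta)m\rfloor\big)\,\leq\,\exp(mV^*)\exp\big(-mV(\rho^*-\delta,\delta)+m\delta\big)\,.$$

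The final step is pure arithmetic in the exponent. Rewriting the right-hand side as $\exp\big(m(V^*-V(\rho^*-\delta,\delta)+\delta)\big)$ and inserting the inequality $V(\rho^*-\delta,\delta)\geq V^*+2\delta$ from Lemma~\ref{appov}, the exponent is bounded by $m\big(V^*-(V^*+2\delta)+\delta\big)=-m\delta$. Hence $P(\tde\leq\exp(mV^*)\,|\,N_0=\lfloor(\rho^*-\delta)m\rfloor)\leq\exp(-m\delta)$, and passing to the complementary event yields the stated lower bound $1-\exp(-m\delta)$, valid for every $m\geq m_0$, where $m_0$ is the threshold supplied by Proposition~\ref{cghy} (enlarged if necessary to ensure $\lfloor\exp(mV^*)\rfloor\geq 1$). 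There is no genuine obstacle at this stage: the entire analytic difficulty has already been discharged in Proposition~\ref{cghy} (the fluctuation bound on the persistence time, via the last-exit decomposition at $S$ together with Corollary~\ref{cbd}) and in the variational estimate of Lemma~\ref{appov}. The only points requiring care are bookkeeping ones, namely matching the integer truncation $\lfloor\exp(mV^*)\rfloor$ to the continuous threshold $\exp(mV^*)$, and verifying that the slack $2\delta$ produced by Lemma~\ref{appov} exactly absorbs the spurious $+m\delta$ in the bound of Proposition~\ref{cghy}, leaving a clean $-m\delta$ in the exponent.
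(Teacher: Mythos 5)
Your proposal is correct and follows exactly the paper's own route: choose $\delta$ via Lemma~\ref{appov} so that $V(\rho^*-\delta,\delta)-2\delta\geq V^*$, then apply Proposition~\ref{cghy} with $n=\exp(mV^*)$ and absorb the factor $n$ and the $+m\delta$ term into the exponent to leave $-m\delta$. The paper merely leaves this arithmetic implicit; your version spells it out (along with the harmless integer-truncation and $V^*<0$ remarks) but is otherwise the same argument.
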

\begin{proof}
Let $\delta>0$ be associated
to~$V^*$ as in lemma~\ref{appov}. 
We apply 
proposition~\ref{cghy} with $\delta$ and
$n=
\exp(mV^*)$: there exists $m_0\geq 1$ such that
$$\displaylines{
\forall m\geq m_0\qquad
P\big(\tde\leq
\exp(mV^*)
\,|\,N_0=
\lfloor(\rho^*-\delta)m\rfloor
\big)
\,\leq\, \exp(-m\delta)\,.}$$
This is the desired inequality.
\end{proof}

\noindent
For $\delta>0$, 
let $T({\rho^*-\delta})$ be the first time the process
$(N_n)_{n\geq 0}$
becomes larger than~$(\rho^*-\delta) m$:
$$T({\rho^*-\delta})\,=\,\inf\,\{\,n\geq 0: N_n\geq
(\rho^*-\delta) m\,\}\,.$$
\begin{proposition}\label{hfaj}
Let $\pi>1$ be fixed.
For any $\delta>0$, there exist $\kappa>0$ and $p_1>0$, 
which depend only on $\pi$ and $\delta$, such that:
for any set of parameters
$\ell,p_C,p_M$ satisfying
$\pi
\,=\,\sigma(1-p_C)(1-p_M)^\ell$,
we have
$$\forall m\geq 1\qquad
P\big(T({\rho^*-\delta})\leq\kappa\ln m\,\big|\,N_0=1\big)\,\geq\,
p_1\,.$$
\end{proposition}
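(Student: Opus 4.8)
The plan is to reach the level $(\rho^*-\delta)m$ in two phases, gluing together the estimates already established for the auxiliary chain $(N_n)_{n\geq 0}$. First I would note that the constant $\delta_0$ furnished by proposition~\ref{sfj} may be shrunk: if its conclusion holds for some $\delta_0$, it holds for every smaller value, because $T(\delta_0')\leq T(\delta_0)$ whenever $\delta_0'\leq\delta_0$ and $\{\tau_0>T(\delta_0)\}\subseteq\{\tau_0>T(\delta_0')\}$. Hence I assume from now on that $\delta_0\leq\delta$, the constants $\delta_0,p_0,\kappa_1$ depending only on $\pi$ and $\delta$.

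Phase one: by proposition~\ref{sfj}, with probability at least $p_0$ the chain started at $N_0=1$ satisfies $T(\delta_0)\leq\kappa_1\ln m$ and $\tau_0>T(\delta_0)$, so it reaches a state $N_{T(\delta_0)}\geq\delta_0 m$ within $\kappa_1\ln m$ steps. Phase two: from such a state I would push the chain up to $(\rho^*-\delta)m$ using lemma~\ref{lbd} applied with parameter $\delta_0$. That lemma provides $h\geq 1$, $c>0$ and $m_0$, depending only on $\delta_0$ (hence on $\pi,\delta$), such that from the deterministic level $\lfloor m\delta_0\rfloor$ the chain exceeds $m(\rho^*-\delta_0)$ after exactly $h$ steps with probability at least $1-\exp(-cm)$; since $\delta_0\leq\delta$ we have $m(\rho^*-\delta_0)\geq m(\rho^*-\delta)$, so this event forces $T(\rho^*-\delta)$ to have occurred.

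To glue the two phases I would invoke the strong Markov property at the stopping time $S=T(\delta_0)$ together with the monotone coupling of the family $(N_n(t,i))$ in the initial condition $i$, available because each map $\Psi_n$ is non-decreasing. On the phase-one event we have $N_S\geq\delta_0 m\geq\lfloor m\delta_0\rfloor$, so the chain restarted at $N_S$ dominates the one started at $\lfloor m\delta_0\rfloor$, and lemma~\ref{lbd} gives $P\big(N_{S+h}>m(\rho^*-\delta)\,\big|\,\cF_S\big)\geq 1-\exp(-cm)$ on that event. Combining, $$P\big(T(\rho^*-\delta)\leq\kappa_1\ln m+h\,\big|\,N_0=1\big)\,\geq\,p_0\big(1-\exp(-cm)\big)\,.$$ For $m$ larger than some $m_1$ the right side is at least $p_0/2$, and taking $\kappa=\kappa_1+h/\ln 2$ (so that $\kappa_1\ln m+h\leq\kappa\ln m$ for all $m\geq 2$) yields the claimed bound with $p_1=p_0/2$ for every $m\geq m_1$.

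It remains to absorb the finitely many values $m<m_1$. For $m=1$ the bound is trivial: since $\rho^*<1$ we have $N_0=1\geq(\rho^*-\delta)m$, so $T(\rho^*-\delta)=0$. For $2\leq m<m_1$ I would argue directly from the transition law: starting at $N_0=1$, the event that no crossover occurs (which has probability $(1-p_C)^{m/2}>0$, as $\pi>1$ forces $p_C<1$) followed by $m$ successful Bernoulli trials of parameter $\varepsilon_m(1)>0$ gives $N_1=m\geq(\rho^*-\delta)m$, whence $P\big(N_1\geq(\rho^*-\delta)m\,\big|\,N_0=1\big)>0$ and $T(\rho^*-\delta)\leq 1\leq\kappa\ln m$ (using $\ln m\geq\ln 2$ and $\kappa\geq 1/\ln 2$). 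Taking the minimum of these finitely many positive probabilities together with $p_0/2$ produces the final constant $p_1$. The only genuinely delicate point is the gluing step: one must verify that the monotone coupling legitimately transfers lemma~\ref{lbd} from the fixed starting level $\lfloor m\delta_0\rfloor$ to the random level $N_S$, which is precisely what the strong Markov property combined with the monotonicity of $\Psi_n$ supplies.
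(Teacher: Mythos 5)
Your proof is correct and follows essentially the same route as the paper: phase one via proposition~\ref{sfj}, phase two via lemma~\ref{lbd}, glued by the Markov property together with the monotonicity of the maps $\Psi_n$ in the initial condition. The only differences are cosmetic (you shrink $\delta_0$ below $\delta$ where the paper shrinks $\delta$ below $\delta_0$, and you treat the finitely many small values of $m$ more explicitly than the paper does).
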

\begin{proof}
Let $\kappa,\delta_0$
be given by proposition~\ref{sfj}.
Let $\delta>0$ 
be associated to $V^*$ as in corollary~\ref{ghy}.
We suppose in addition that $\delta<\delta_0$.
We have 
\begin{multline*}
P\big(
T({\rho^*-\delta})\leq 2\kappa\ln m
\,\big|\,N_0=1\big)
\hfill\cr
\,\geq\,\sum_{k=1}^{\kappa\ln m}
\sum_{i\geq m\delta}
P\big(
T({\rho^*-\delta})\leq 2\kappa\ln m
,\,T(\delta)=k,\,
N_k=i\,\big|\,N_0=1\big)\cr
\,=\,\sum_{k=1}^{\kappa\ln m}
\sum_{i\geq m\delta}
P\big(
T({\rho^*-\delta})\leq 2\kappa\ln m
\,\big|\,T(\delta)=k,\,
N_k=i\big)\hfill\cr
\hfill\times
P\big(T(\delta)=k,\,
N_k=i
\,\big|\,N_0=1\big)\cr
\,=\,\sum_{k=1}^{\kappa\ln m}
\sum_{i\geq m\delta}
P\big(
T({\rho^*-\delta})\leq 2\kappa\ln m-k
\,\big|\,
N_0=i\big)\hfill\cr
\hfill\times
P\big(T(\delta)=k,\,
N_k=i
\,\big|\,N_0=1\big)\cr
\,\geq\,
P\big(
T({\rho^*-\delta})\leq\kappa\ln m
\,\big|\,
N_0=\lfloor m\delta\rfloor \big)
\,
P\big(
T(\delta)\leq\kappa\ln m\,\big|\,N_0=1\big)\,.
\end{multline*}
Let $h\geq 1$, $c>0$ as in
lemma~\ref{lbd}. We suppose that $m$ is large enough so that
$\kappa\ln m\geq h$.
Using again the Markov property, we obtain 
\begin{multline*}
P\big(
T({\rho^*-\delta})\leq\kappa\ln m
\,\big|\,
N_0=\lfloor m\delta\rfloor \big)
\,\geq\,\hfill\cr
P\big(
N_1>0,\dots,N_{h-1}>0,N_h>m(\rho^*-\delta)\,\big|\,
N_0=\lfloor m\delta\rfloor \big)\cr
\,\geq\,
1-\exp(-cm)\,.
\end{multline*}
Putting together the previous inequalities, and using the inequality
of proposition~\ref{sfj}, we conclude that
for $m$ large enough,
$$P\big(
T({\rho^*-\delta})\leq 2\kappa\ln m 
\,\big|\,N_0=1\big)\,\geq\,
\big(1-\exp(-cm)\big)p_0\,.
$$
This implies the result stated in the proposition.
\end{proof}
\begin{corollary}\label{faj}
Let $\pi>1$ be fixed.
For any $V^*<V(\rho^*,0)$, there exists $p^*>0$, 
which
depends only on $V^*$ and $\pi$,
such that:
for any set of parameters
$\ell,p_C,p_M$ satisfying
$\pi
\,=\,\sigma(1-p_C)(1-p_M)^\ell$,
we have
$$\forall m\geq 1\qquad
P\big(\tau_0\geq\exp(mV^*)\,\big|\,N_0=1\big)\,\geq\,
p^*\,.$$
\end{corollary}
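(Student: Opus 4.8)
The plan is to glue together the two complementary estimates proved just above. Proposition~\ref{hfaj} says that, starting from a single copy, the chain climbs to the level $(\rho^*-\delta)m$ within $\kappa\ln m$ steps with a probability bounded below by a constant $p_1$ depending only on $\pi$ and $\delta$; corollary~\ref{ghy} says that, once the chain sits above $(\rho^*-\delta)m$, it stays above $m\delta$ for a time of order $\exp(mV^*)$ with probability at least $1-\exp(-m\delta)$. Joining a \emph{climbing} phase and a \emph{lingering} phase at the stopping time $T=T(\rho^*-\delta)$ via the strong Markov property gives the claim. The structural fact that makes this clean is that $0$ is absorbing for $(N_n)_{n\geq 0}$: reaching the positive level $(\rho^*-\delta)m$ forces the chain not to have been absorbed, so on the event $A=\{T\leq\kappa\ln m\}$ one automatically has $\tau_0>T$, and moreover $N_T\geq(\rho^*-\delta)m\geq\lfloor(\rho^*-\delta)m\rfloor$. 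Note also that staying above $m\delta>0$ is more restrictive than staying above $0$, so $\{\tau_\delta>t\}\subseteq\{\tau_0>t\}$.

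Concretely, I would first fix $V^*<V(\rho^*,0)$ and invoke corollary~\ref{ghy} to produce $\delta>0$ and $m_0$, then feed this $\delta$ into proposition~\ref{hfaj} to obtain $\kappa$ and $p_1$. Conditioning on the past up to $T$ and using that proposition~\ref{cghy} is valid from \emph{every} starting level $i\geq\lfloor(\rho^*-\delta)m\rfloor$ (not merely from $\lfloor(\rho^*-\delta)m\rfloor$ itself), the post-$T$ probability of descending below $m\delta$ within $\exp(mV^*)$ steps is at most $\exp(-m\delta)$, by the very computation yielding corollary~\ref{ghy} from lemma~\ref{appov}. Hence, for $m\geq m_0$,
\begin{align*}
P\big(\tau_0>\exp(mV^*)\,\big|\,N_0=1\big)
&\,\geq\,E\Big[1_A\,P\big(\tau_\delta>\exp(mV^*)\,\big|\,N_0=N_T\big)\Big]\\
&\,\geq\,\big(1-\exp(-m\delta)\big)\,P(A)
\,\geq\,p_1\big(1-\exp(-m\delta)\big),
\end{align*}
where the inner conditional probability denotes the transition law of the chain evaluated at the value $N_T$. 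For $m$ beyond a threshold $M$ (depending only on $\delta$, hence on $\pi$ and $V^*$) the factor $1-\exp(-m\delta)$ exceeds $1/2$, giving the uniform bound $p_1/2$.

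The only point needing extra care is uniformity down to small $m$, since the displayed bound requires $m\geq\max(m_0,M)$. For the finitely many remaining even values $m<\max(m_0,M)$ the survival horizon $\exp(mV^*)$ is bounded by a constant $C$ depending only on $V^*$ and $\pi$, so it suffices to bound below, uniformly in the parameters $\ell,p_C,p_M$ (and the selection scheme) compatible with the fixed value of $\pi$, the probability of keeping the chain positive for $C$ steps. Here I would use that $\pi=\sigma(1-p_C)(1-p_M)^\ell>1$ forces $p_C<1$ and $p_M<1$, so from any state $i\geq1$ one has $P\big(N_{n+1}\geq1\,\big|\,N_n=i\big)\geq F_m(m)(1-p_C)(1-p_M)^\ell=F_m(m)\,\pi/\sigma>0$ (it suffices that one selection lands on the top-ranked chromosome, escapes crossover, and escapes mutation); for the linear ranking and tournament schemes $F_m(m)/\sigma$ is explicit and bounded away from $0$ on this finite range of $m$, giving a uniform one-step survival probability $\beta>0$. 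Monotonicity of the chain then yields $P(\tau_0>C\mid N_0=1)\geq\beta^{C}>0$, and setting $p^*=\min\big(p_1/2,\ \beta^{C}\big)$ produces a strictly positive constant depending only on $V^*$ and $\pi$ that works for every $m\geq1$. I expect this uniform-in-parameters bookkeeping for small $m$, rather than the two-phase argument itself, to be the only genuinely delicate part.
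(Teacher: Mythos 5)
Your proposal is correct and follows essentially the same route as the paper: split at the stopping time $T(\rho^*-\delta)$, use proposition~\ref{hfaj} for the climbing phase and corollary~\ref{ghy} (via $\tau_0\geq\tau_\delta$) for the persistence phase, obtaining $p_1(1-\exp(-m\delta))$ for $m\geq m_0$. Your treatment of the finitely many small $m$ is in fact more explicit than the paper's, which merely asserts that the probability remains positive there; your uniform one-step survival bound $F_m(m)\pi/\sigma$ supplies the uniformity over $\ell,p_C,p_M$ that this assertion implicitly requires.
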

\begin{proof}
Let $V^*<V(\rho^*,0)$. 
Let $\delta>0$ and $m_0\geq 1$
be associated to $V^*$ as in 
corollary~\ref{ghy}.
Let $\kappa>0$ and $p_1>0$ 
be associated to $\delta$ as in 
{proposition}~\ref{hfaj}.
We write
$$\displaylines{
P\big(\tau_0\geq\exp(mV^*)\,\big|\,N_0=1\big)\,\geq\,
\hfill\cr
P\big(
\tau_0\geq\exp(mV^*),\,
T({\rho^*-\delta})\leq\kappa \ln m
\,|\,N_0=1\big)
\,\geq\,
\cr
\sum_{i>(\rho^*-\delta)m}
P\big(
\tau_0\geq\exp(mV^*),\,
T({\rho^*-\delta})\leq\kappa \ln m,\,
X_{T({\rho^*-\delta})}=i
\,|\,N_0=1\big)\cr
\,\geq\,
\sum_{i>(\rho^*-\delta)m}
P\big(
\tau_0\geq\exp(mV^*)\,\big|\,
X_{T({\rho^*-\delta})}=i,\,
\tau_0> {T({\rho^*-\delta})}
\big)
\hfill\cr
\hfill
P\big(
X_{T({\rho^*-\delta})}=i,\,
\tau_0> {T({\rho^*-\delta})},\,
T({\rho^*-\delta})\leq\kappa \ln m
\,|\,N_0=1\big)\cr
\,\geq\,
P\big(\tau_0\geq\exp(mV^*)\,\big|\,N_0=\lfloor (\rho^*-\delta)m\rfloor
\big)\hfill\cr
\hfill
\times P\big(
\tau_0> {T({\rho^*-\delta})},\,
T({\rho^*-\delta})\leq\kappa \ln m
\,|\,N_0=1\big)\,.
}$$
Since $\tau_0\geq\tau_\delta$, we have 
by corollary~\ref{ghy} that for 
$m\geq m_0$,
$$\displaylines{
P\big(\tau_0\geq
\exp(mV^*)
\,|\,N_0=
\lfloor(\rho^*-\delta)m\rfloor
\big)
\,\geq\, 1-\exp(-m\delta)\,,}$$
whence
$$P\big(\tau_0\geq\exp(mV^*)\,\big|\,N_0=1\big)\,\geq\,
(1-\exp(-m\delta))\,p_1\,.$$
For $m<m_0$, the above probability is still positive,
and we obtain the desired conclusion.
\end{proof}
\section{Proof of theorems~\ref{thqsr}, \ref{thcsr}
\ref{htopt}, 
\ref{ctopt},
\ref{invpt}}
\label{finproof}
\subsection{Proof of theorem~\ref{thqsr}}
Let $\lambda_0$ be as in
theorem~\ref{thqsr}.
By proposition~\ref{couplnl}, we have 
$$\forall n\geq 0\qquad N(X_n,\lambda_0)\,\geq\,
N_{n}\big(0,1)\,.$$
Let $V^*>0$ be such that
$V^*<V(\rho^*,0)$.
By corollary~\ref{faj},
there exists $p^*>0$, 
which depends on $\pi$ and $V^*$ only, such that
$$\forall m\geq 1\qquad
P\big(\forall n\leq\exp(mV^*)\quad
N_n(0,1)\geq 1
\big)\,\geq\,
p^*\,.$$
This yields the conclusion of 
theorem~\ref{thqsr}.
\subsection{Proof of theorem~\ref{thcsr}}
We apply proposition~\ref{couplnl} with $n=t$, starting time~$s$,
$\lambda=
\Lambda\big(X_s,\lfloor\rho^*m\rfloor\big)$.
By definition of $\Lambda$, 
$$N\big(X_s,\Lambda\big(X_s,\lfloor\rho^*m\rfloor\big)\big)
\,\geq\,
\lfloor\rho^*m\rfloor\,,$$
therefore
$$\forall t\geq s\qquad
N\big(X_t,
\Lambda\big(X_s,\lfloor\rho^*m\rfloor\big)\big)\,\geq\,
N_t\big(s,
\lfloor\rho^*m\rfloor\big)\,.$$
If for some time $t> s$, we have
$$\max_{1\leq i\leq m}f\big(X_t(i)\big)
\,<\,
\Lambda\big(X_s,\lfloor\rho^*m\rfloor\big)
\,,$$
then
$$N\big(X_t,
\Lambda\big(X_s,\lfloor\rho^*m\rfloor\big)\big)\,=\,0\,,$$
and from the previous inequality, we conclude that
$N_t\big(s, \lfloor\rho^*m\rfloor\big)=0$.
Yet
$$P\big(N_t\big(s, \lfloor\rho^*m\rfloor\big)=0\big)
\,\leq\,
P\big(\tau_0\leq t-s\,\big|\,N_0
=\lfloor\rho^*m\rfloor\big)\,.
$$
Let $V^*>0$ be such that
$V^*<V(\rho^*,0)$.
Let $\delta>0$ as in lemma~\ref{appov}.
By proposition~\ref{cghy}, there exists $m_0\geq 1$ such that,
for $m\geq m_0$, 
$$
P\big(\tau_0\leq t-s\,\big|\,N_0
=\lfloor\rho^*m\rfloor\big)
\,\leq\,
(t-s)
\exp(-V^*m)\,.$$
We sum this inequality over $s,t$ such that
$s < t\leq\exp(V^*m/4)$
to obtain
$$\displaylines{
P\Big(\exists\,t\leq \exp(V^*m/4)\quad
\max_{1\leq i\leq m}f\big(X_t(i)\big)
\,<\,
\max_{0\leq s\leq t}\Lambda\big(X_s,\lfloor\rho^*m\rfloor\big)
\Big)\hfill\cr
\,\leq\,
\sum_{0\leq s < t\leq\exp(V^*m/4)}
(t-s) \exp(-V^*m)
\,\leq\,
\exp(-V^*m/4)\,.
}$$
This proves theorem~\ref{thcsr}.
\subsection{Proof of theorems~\ref{htopt}, \ref{ctopt}}
For $E$ a subset of $\zulm$, we define the entrance time
of the genetic algorithm in~$E$ as
$$\tau(E)\,=\,
\inf\,\big\{\,n\geq 0: X_n\in E
\big\}\,.$$
For $\lambda\in\mathbb R$ and $k\geq 1$, we define
$L(\lambda,k)$ as the set of the populations
containing at least $k$ chromosomes with a fitness
larger than or equal to~$\lambda$:
$$L(\lambda,k)\,=\,\Big\{\,x\in\zulm:
\Lambda(x,k)\geq \lambda\,\Big\}\,.$$
Recall that the quantity $\Delta(\lambda,\gamma)$ is defined
just before~theorem~\ref{htopt}.
\begin{lemma}
\label{laibd}
Let $\pi>1$. Let 
$V^*<V(\rho^*,0)$. There exist positive constants
$\delta,\kappa^*,m_0$ which depend only on $\pi$ and $V^*$ such that
$$\displaylines{
\forall\lambda<\gamma\quad
\forall x_0\in L\big(\lambda,
\lfloor(\rho^*-\delta)m\rfloor\big)
\quad\forall m\geq m_0
\hfill\cr
P\Big(
\tau\big(
L\big(\gamma,\lfloor(\rho^*-\delta)m\rfloor\big)
\big)
>
\frac{\kappa^*}{2}(\ln m)m^2
(p_M)^{-\Delta(\lambda,\gamma)}
\,\big|\,X_0=x_0\Big)
\,\leq\,\hfill\cr
\hfill
\kappa^*
(\ln m)
m^2
(p_M)^{-\Delta(\lambda,\gamma)}
\exp\big(-mV^*\big)
\,.}$$
\end{lemma}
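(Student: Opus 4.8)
The plan is to combine two complementary facts already established for the auxiliary chain, viewed at two fitness levels. The $\lambda$--level quasispecies persists for an exponentially long time (Proposition~\ref{cghy} and Corollary~\ref{ghy}), and, conditionally on the presence of a single chromosome of fitness $\geq\gamma$, a full $\gamma$--quasispecies of size $\lfloor(\rho^*-\delta)m\rfloor$ is created within $\kappa\ln m$ generations with probability bounded below (Proposition~\ref{hfaj}). The surviving $\lambda$--quasispecies plays the role of a reservoir of launching pads: by the very definition of $\Delta(\lambda,\gamma)$, each of its chromosomes lies within Hamming distance $\Delta(\lambda,\gamma)$ of a chromosome of fitness $\geq\gamma$, so at every generation there is a probability of order $(p_M)^{\Delta(\lambda,\gamma)}$ of creating such a mutant. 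I would therefore run repeated trials, each consisting of one discovery step followed by a growth window of length $\kappa\ln m$, and show that within the budget $T=\frac{\kappa^*}{2}(\ln m)m^2(p_M)^{-\Delta(\lambda,\gamma)}$ at least one trial succeeds, unless the $\lambda$--quasispecies has collapsed in the meantime.

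I first fix $\delta>0$, smaller than the $\delta_0$ of Proposition~\ref{sfj}, so that Lemma~\ref{appov} gives $V(\rho^*-\delta,\delta)-2\delta\geq V^*$, and let $\kappa$ be the constant attached to this $\delta$ in Proposition~\ref{hfaj}. Writing $\Delta=\Delta(\lambda,\gamma)$ and letting $\tau_\delta$ be the first time the level--$\lambda$ auxiliary chain drops below $m\delta$, the collapse probability is controlled as follows: since $x_0\in L(\lambda,\lfloor(\rho^*-\delta)m\rfloor)$, Proposition~\ref{couplnl} dominates $N(X_n,\lambda)$ from below by the auxiliary chain started at $N(X_0,\lambda)\geq\lfloor(\rho^*-\delta)m\rfloor$, and Proposition~\ref{cghy} then yields $P(\tau_\delta\leq T)\leq T\exp(-mV(\rho^*-\delta,\delta)+m\delta)\leq T\exp(-mV^*)$, which is exactly half of the announced bound.

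On the complementary event $\{\tau_\delta>T\}$ the $\lambda$--quasispecies is present at every generation of $[0,T]$. I partition $[0,T]$ into $K=\lfloor T/(1+\lceil\kappa\ln m\rceil)\rfloor$ consecutive rounds, so that $K$ is of order $m^2(p_M)^{-\Delta}$. The key estimate is that, conditionally on the history up to the start of a round and on the quasispecies being present then, the probability of reaching $L(\gamma,\lfloor(\rho^*-\delta)m\rfloor)$ during that round is at least $q_0 p_1 (p_M)^\Delta$, where $p_1$ is the growth probability of Proposition~\ref{hfaj} applied to the level--$\gamma$ chain through Proposition~\ref{couplnl} starting from the single mutant just created, and $q_0(p_M)^\Delta$ lower--bounds the discovery probability in the first generation of the round: selecting a fitness--$\geq\lambda$ parent has probability at least $1-F(1-\delta)>0$, avoiding crossover on its pair has probability $1-p_C\geq\pi/\sigma$, and flipping exactly the $h\leq\Delta$ bits that carry this parent onto a fitness--$\geq\gamma$ chromosome has probability $(p_M)^h(1-p_M)^{\ell-h}\geq(p_M)^\Delta(1-p_M)^\ell\geq(p_M)^\Delta\,\pi/\sigma$. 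A standard iterated--conditioning argument over the $K$ rounds, using the Markov property, then gives $P(\text{no round succeeds},\,\tau_\delta>T)\leq(1-q_0p_1(p_M)^\Delta)^K\leq\exp(-c'm^2)$, the factors $(p_M)^{\pm\Delta}$ cancelling. Adding this to the collapse bound and absorbing $\exp(-c'm^2)$ into the first term for $m\geq m_0$ produces the claimed estimate.

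The main obstacle is the discovery estimate and its clean insertion into the round decomposition. One must use the geometric meaning of $\Delta(\lambda,\gamma)$ to guarantee a fitness--$\geq\gamma$ target within reach of every launching pad, and control selection, crossover and mutation only through the uniform constants forced by $\pi=\sigma(1-p_C)(1-p_M)^\ell$, so that the bound does not depend on the individual values of $p_C$ and $(1-p_M)^\ell$. The delicate bookkeeping point is to set up the conditioning so that the $\lambda$--persistence event $\{\tau_\delta>T\}$ and the sequence of discovery/growth trials can be treated simultaneously despite $\{\tau_\delta>T\}$ being a future event; this is handled by intersecting with $\{\tau_\delta>T\}$ before conditioning round by round, since on that event the launching pads are present at every round start. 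It is precisely the cancellation of $(p_M)^{\Delta}$ between the per--round success probability and the number of rounds that makes the final bound uniform in the mutation probability.
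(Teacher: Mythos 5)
Your proposal is correct and follows essentially the same route as the paper's proof: a decomposition into the collapse event (controlled via Proposition~\ref{cghy} and the coupling of Proposition~\ref{couplnl}) plus a round-by-round discovery-and-growth argument using the definition of $\Delta(\lambda,\gamma)$ and Proposition~\ref{hfaj}, with $\kappa^*$ chosen so that the number of rounds defeats the per-round failure probability. The only (harmless) deviation is that you lower-bound the discovery probability by selecting any of the $\geq m\delta$ surviving fitness-$\geq\lambda$ chromosomes, giving a per-round success probability of order $(p_M)^{\Delta}$, whereas the paper targets only the single top-ranked chromosome via $F_m(m)\geq\sigma/(2m)$ and gets $(p_M)^{\Delta}/(2m)$ — both are absorbed into the stated bound.
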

\begin{proof}
Let $\pi>1$. Let 
$V^*<V(\rho^*,0)$.
Let $\delta>0$ and $m_0\geq 1$
be associated to $V^*$ as in corollary~\ref{ghy}.
Let $p_1>0$
be associated to $\delta$ as in 
proposition~\ref{hfaj}.
Let us fix $\lambda<\gamma\in\mathbb R$. 
By the very definition of $\Delta(\lambda,\gamma)$, 
any chromosome in $L(\lambda)$ can be transformed with
at most
$\Delta(\lambda,\gamma)$ 
mutations
into 
a chromosome of $L(\gamma)$, and the corresponding probability
is bounded from below by
$$(p_M)^{\Delta(\lambda,\gamma)}
(1-p_M)^{\ell-\Delta(\lambda,\gamma)}\,.$$
Let $x_0$ belong to
$L\big(\lambda,1\big)$.
The population~$x_0$ contains at least one 
chromosome belonging to
$L(\lambda)$. By estimating the probability
that this chromosome is selected, that there is no crossover on it,
and that it
is transformed by mutation
into 
a chromosome in $L(\gamma)$, we obtain that
$$P\big(X_1
\in L(\gamma,1)\,\big|\,X_0=x_0\big)\,\geq\,
F_m(m)
(1-p_C)
(p_M)^{\Delta(\lambda,\gamma)}
(1-p_M)^{\ell-\Delta(\lambda,\gamma)}\,.$$
The hypothesis on $F_m$ (see section~\ref{genhy}) implies that,
for $m$ large enough,
$F_m(m)\geq \sigma/(2m)$. Since we have also $\pi>1$,
then
$$P\big(X_1
\in L(\gamma,1)\,\big|\,X_0=x_0\big)\,\geq\,
\frac{1}{2m}
(p_M)^{\Delta(\lambda,\gamma)}
\,.$$
Suppose next that $X_1$
belong to
$L\big(\gamma,1\big)$.
Then the population~$X_1$ contains at least one 
chromosome in $L(\gamma)$, hence
$N(X_1,\gamma)\geq 1$, and
by
proposition~\ref{couplnl}, we have 
$$\forall n\geq 1\qquad N(X_n,\gamma)\,\geq\,
N_{n}\big(1,1)\,.$$
Thus, by proposition~\ref{hfaj}, 
there exist $\kappa>0$ and $p_1>0$ 
such that,
for any $x_1\in L\big(\gamma,1\big)$, and any $m\geq 1$,
\begin{multline*}
P\big(
\tau\big(
L\big(\gamma,\lfloor(\rho^*-\delta)m\rfloor\big)
\big)
\leq\kappa\ln m+1
\,\big|\,X_1=x_1\big)
\cr
\,\geq\,
P\big(T({\rho^*-\delta})\leq\kappa\ln m+1\,\big|\,N_1=1\big)
\,\geq\,
p_1\,.
\end{multline*}
Combining the previous bounds, we get
$$\displaylines{
\forall x_0\in L\big(\lambda,1\big)\hfill\cr
P\big(
\tau\big(
L\big(\gamma,\lfloor(\rho^*-\delta)m\rfloor\big)
\big)
\leq\kappa\ln m+1
\,\big|\,X_0=x_0\big)
\,\geq\,
p_1\,
\frac{1}{2m}
(p_M)^{\Delta(\lambda,\gamma)}
\,.}
$$
We decompose
$\{\,1,\dots,n\,\}$ into subintervals of length
$\lfloor\kappa\ln m+1\rfloor$ and we use the
previous estimate: we obtain that
for any
$x_0\in L(\lambda,1)$ and $n\geq 1$,
$$\displaylines{
P\big(
\tau\big(
L(\lambda,1)
\big)>n,\,
\tau\big(
L\big(\gamma,\lfloor(\rho^*-\delta)m\rfloor\big)
\big)
>n
\,\big|\,X_0=x_0\big)
\,\leq\,\hfill\cr
\hfill\Big(1-p_1\,
\frac{1}{2m}
(p_M)^{\Delta(\lambda,\gamma)}
\Big)^{
\Big\lfloor
\displaystyle
\frac{n}{
\lfloor\kappa\ln m+1\rfloor
}
\Big\rfloor
}
\,.}
$$
Let next
$x_0\in L\big(\lambda,
\lfloor(\rho^*-\delta)m\rfloor\big)$.
We use
proposition~\ref{cghy} 
and the previous estimate: there exists $m_0\geq 1$ such that,
for $n\geq 1$,
$$\displaylines{
P\big(
\tau\big(
L\big(\gamma,\lfloor(\rho^*-\delta)m\rfloor\big)
\big)
>n
\,\big|\,X_0=x_0\big)
\,\leq\,
P\big(
\tau\big(
L(\lambda,1)
\big)\leq n
\,\big|\,X_0=x_0\big)
\hfill\cr
\hfill
+
P\big(
\tau\big(
L(\lambda,1)
\big)>n,\,
\tau\big(
L\big(\gamma,\lfloor(\rho^*-\delta)m\rfloor\big)
\big)
>n
\,\big|\,X_0=x_0\big)
\cr
\,\leq\,
n\exp\big(-mV^*\big)
+\exp\Big(-p_1\,
\frac{1}{2m}
(p_M)^{\Delta(\lambda,\gamma)}
\Big\lfloor
\displaystyle
\frac{n}{
\lfloor\kappa\ln m+1\rfloor
}
\Big\rfloor
\Big)\,.
}$$
We choose
$$\kappa^*=
\frac{8\kappa V^*}{p_1}\,,\qquad
n= \frac{1}{2}\kappa^*
(\ln m)
m^2
(p_M)^{-\Delta(\lambda,\gamma)}\,,$$
and for $m$ large enough, we obtain the estimate
stated in the lemma.
\end{proof}
Let now 
$\lambda_0<\cdots<\lambda_r$ 
be an increasing sequence such that
$$\lambda_0\,=\,\min\,\big\{\,f(u):u\in\zul\,\big\}\,,\quad
\lambda_r\,=\,\max\,\big\{\,f(u):u\in\zul\,\big\}\,.$$
Let $\pi>1$, 
$V^*<V(\rho^*,0)$ and let
$\delta,\kappa^*,m_0$ as given by lemma~\ref{laibd}.
We write
$$\tau^*\,\leq\,
\sum_{k=1}^r
\Big(
\tau\big( L\big(\lambda_{k},\lfloor(\rho^*-\delta)m\rfloor\big) \big)
-
\tau\big( L\big(\lambda_{k-1},\lfloor(\rho^*-\delta)m\rfloor\big) \big)
\Big)\,.
$$
Thus, for any starting population $x_0$, we have
$$\displaylines{
P\Big(
\tau^*\leq 
\frac{\kappa^*}{2}(\ln m)m^2
\sum_{k=1}^r
(p_M)^{-\Delta(\lambda_{k-1},\lambda_k)}
\,\big|\,X_0=x_0\Big)
\,\geq\,\hfill\cr
P\Big(
\forall k\in\{\,1,\dots,r\,\}\quad
\tau\big( L\big(\lambda_{k},\lfloor(\rho^*-\delta)m\rfloor\big) \big)
-
\tau\big( L\big(\lambda_{k-1},\lfloor(\rho^*-\delta)m\rfloor\big) \big)
\hfill\cr
\hfill
\leq
\frac{\kappa^*}{2}(\ln m)m^2
(p_M)^{-\Delta(\lambda_{k-1},\lambda_k)}
\,\big|\,X_0=x_0\Big)
\,.}$$
To control this last probability, we use repeatedly
the Markov property and the 
estimate of 
lemma~\ref{laibd}.
Finally, we obtain
$$\displaylines{
P\Big(
\tau^*\leq 
\frac{\kappa^*}{2}(\ln m)m^2
\sum_{k=1}^r
(p_M)^{-\Delta(\lambda_{k-1},\lambda_k)}
\,\big|\,X_0=x_0\Big)
\,\geq\,\hfill\cr
\,\geq\,
\prod_{k=1}^r
\Big(1-\kappa^*
(\ln m)
m^2
(p_M)^{-\Delta(\lambda_{k-1},\lambda_k)}
\exp\big(-mV^*\big)\Big)
\,.}$$
We complete now the proof 
of~theorem~\ref{htopt}.
We suppose that $m\geq c^*\ell\ln\ell$ and that $p_M\geq c^*/\ell$.
Since 
$\Delta(\lambda_{k-1},\lambda_k)\leq\ell$ for any $k$, we have
for $m$ large enough
$$\forall k\in\{\,1,\dots,r\,\}\qquad
\kappa^*
(\ln m)
m^2
(p_M)^{-\Delta(\lambda_{k-1},\lambda_k)}
\,\leq\,
\exp\big(2m/c^*\big)
\,.$$
We take $c^*$ such that $2/c^*<V^*/2$ and we obtain,
for $m$ large enough,
$$\displaylines{
P\Big(
\tau^*\leq 
\frac{\kappa^*}{2}(\ln m)m^2
\sum_{k=1}^r
(p_M)^{-\Delta(\lambda_{k-1},\lambda_k)}
\,\big|\,X_0=x_0\Big)
\,\hfill\cr
\,\geq\,
\Big(1-
\exp\big(-mV^*/2\big)\Big)^{\textstyle 2^\ell}
\,\geq\,\frac{1}{2}
\,.}$$
The bound on the expectation of $\tau^*$ is a consequence
of this estimate and lemma~\ref{ebound}.
This completes the proof 
of~theorem~\ref{htopt}.

We complete next the proof of~theorem~\ref{ctopt}. The proof is
a variant of the previous argument. 
We take $c^*$ such that 
$$c^*>\kappa^*\,,\quad
c^*>\frac{4}{V^*}\,,\quad
c^*>\frac{2\gamma}{V^*\Delta}\,.$$
We suppose that $m\geq c^*\Delta\ln\ell$ and that $p_M\geq c^*/\ell$.
Let $k\in\{\,1,\dots,r\,\}$.
Since 
$\Delta(\lambda_{k-1},\lambda_k)\leq\Delta$,
we have
for $m$ large enough,
$$
\kappa^*
(\ln m)
m^2
(p_M)^{-\Delta(\lambda_{k-1},\lambda_k)}
\,\leq\,
(\ln m)
m^2
\ell^{\Delta}
\,\leq\,
\exp\big(2m/c^*\big)
\,.$$
It follows that
$$\displaylines{
P\Big(
\tau^*\leq 
\frac{1}{2}(\ln m)m^2
\ell^{\gamma+\Delta}
\,\big|\,X_0=x_0\Big)
\,\geq\,
\Big(1-
\exp\big(-mV^*/2\big)\Big)^{\textstyle \ell^\gamma}
\,\geq\,\frac{1}{2}
\,.}$$
We conclude as before with the help of
lemma~\ref{ebound}.

\subsection{Proof of theorem~\ref{invpt}}
Let $V^*<V(\rho^*,0)$ and 
let $\delta>0$ as in lemma~\ref{appov}.
Let
$$\lambda^*\,=\,
\max\,\big\{\,f(u):u\in\zul\,\big\}\,.$$
We apply the estimate on the invariant measure given in 
lemma~\ref{bdinv} with the following sets:
$$V\,=\,\Lambda\big(
\lambda^*,
\lfloor(\rho^*-\delta)m\rfloor\big)\,,\qquad
G\,=\,\Lambda\big(
\lambda^*,
1
\big)^c
\,.
$$
We obtain
$$\mu(G)\,\leq\,
\sup_{x\in V}\,
P\big(\tau_G<\tau_V\,\big|\,X_0=x\big)\,
\sup_{y\in G}\,
E\big(\tau_V\,\big|\,X_0=y\big)\,.$$
Let $x\in V$ and $n\geq 1$. We have
$$
P\big(\tau_G<\tau_V\,\big|\,X_0=x\big)\,\leq\,
P\big(\tau_G\leq n\,\big|\,X_0=x\big)\,+\,
P\big(n<\tau_G<\tau_V\,\big|\,X_0=x\big)\,.$$
We estimate separately each term.
Let
$i=N\big(x, \lambda^* \big)$.
Then
$i\geq
\lfloor(\rho^*-\delta)m\rfloor$ and
$$\displaylines{
P\big(\tau_G\leq n\,\big|\,X_0=x\big)\,=\,
P\big(\exists \,k\leq n\quad
N\big(X_k,
\lambda^*
\big)=0
\,\big|\,X_0=x
\Big)\,.}$$
From proposition~\ref{couplnl}, if $X_0=x$,
we have
$$\forall k\geq 0\qquad
N\big(X_k,
\lambda^*
\big)
\,\geq\,
N_{k}(0,i)\,.$$
Applying proposition~\ref{cghy}, we obtain, for $m$ large enough,
$$\displaylines{
P\big(\tau_G\leq n\,\big|\,X_0=x\big)\,\leq\,
P\big(\tau_0\leq n\,\big|\,N_0=i\big)\,\leq\,
n\exp(-mV^*)\,.}$$
For the second term, we remark that, on the event
$\{\,n<\tau_G<\tau_V\,\}$, we have that
$$\forall k\in\{\,1,\dots,n\,\}\qquad
1\,\leq \, N_k(0,1)\,<\,
\lfloor(\rho^*-\delta)m\rfloor\,.$$
We use proposition~\ref{hfaj} and we decompose
$\{\,1,\dots,n\,\}$ into subintervals of length
$\kappa\ln m+1$ to conclude that
$$P\big(n<\tau_G<\tau_V\,\big|\,X_0=x\big)\,\leq\,
(1-p_1)^{\big\lfloor
\textstyle
\frac{n}{\kappa\ln m +1}\big\rfloor}\,.$$
Putting together the previous inequalities, we have,
for $n\geq 1$,
$$
P\big(\tau_G<\tau_V\,\big|\,X_0=x\big)\,\leq\,
n\exp(-mV^*)\,+\,
(1-p_1)^{\big\lfloor
\textstyle
\frac{n}{\kappa\ln m +1}\big\rfloor}\,.$$
We take $n=m^2$ and we conclude that, for $m$ large enough,
$$
P\big(\tau_G<\tau_V\,\big|\,X_0=x\big)\,\leq\,
2m^2\exp(-mV^*)\,.$$
Let next $y\in G$. Using the bounds obtained in theorem~\ref{htopt}
with $r=1$ and $\Delta=\ell$, we have
$$\forall y\in G\qquad
E(
\tau^*
\,\big|\,X_0=y\big)\,\leq\,
2+\kappa^* (\ln m) m^2
\, (p_M)^{-\ell}
\,.$$
Inspecting the proof of theorem~\ref{htopt}, we see that we have in
fact proved this estimate for $\tau_V$ (this is a little stronger
since $\tau_V\geq \tau^*$).
Thus, 
for $p_M\,\geq\,{c^*}/{\ell}$,
$m\geq c^*\ell\ln\ell$ and $m$ large enough,
we have
$$\displaylines{
\mu(G)\,\leq\,
2m^2\exp(-mV^*)\times
\Big(2+\kappa^* (\ln m) m^2
\, (p_M)^{-\ell}\Big)\cr
\,\leq\, m^5\exp\big(-\ell\ln p_M-mV^*\big)\,.}$$
We choose $c^*$ large enough in order to obtain
the conclusion of theorem~\ref{invpt}.

\appendix
\section{Markov chains}
We recall here a few classical facts on Markov chains with 
finite state space.
This material can be found in any reference book on Markov chains,
for instance 
\cite{Breiman}, 
\cite{Fe1},
\cite{SH}. 
In the sequel, we consider
a discrete time Markov chain 
$(X_t)_{t\geq 0}$ 
with
values in a finite state space $\cE$ and
with transition matrix
$(p(x,y))_{x,y\in \cE}$.
\medskip

\noindent
{\bf Invariant probability measure.}
If the Markov chain is
irreducible and aperiodic,
then it admits a unique
invariant probability 
measure 
$\mu$, i.e., the set of equations
$$\mu(y)\,=\,\sum_{x\in\cE}\mu(x)\,p(x,y)\,,
\qquad
y\in\cE\,,$$
admits a unique solution.
\medskip

\noindent
{\bf Representation formula.}
Let us suppose that 
the Markov chain 
$(X_t)_{t\geq 0}$ is irreducible and aperiodic.
Let $\mu$ be the
invariant probability 
measure of
$(X_t)_{t\geq 0}$.
Let $V$ be a non--empty subset of $\cE$.
We define
$$\tau_V\,=\,\min\,\big\{\,n\geq 1: X_n\in V\,\big\}\,.$$
We have then, for any subset $G$ of $\cE$,
$$\mu(G)\,=\,\frac{1}{\mu(V)}\int_V
d\mu(x)\,
E\Big(\sum_{k=0}^{\tau_V-1}1_G(X_k)\,\Big|\,X_0=x\Big)\,.$$
This formula can be found in the book of Freidlin and Wentzell
(see chapter~$6$, section~$4$ of \cite{FW}), where it is attributed
to Khas'minskii, and in the book of Kifer \cite{KI,KID}.
\begin{lemma}\label{bdinv} 
For any subsets $V,G$ of $\cE$, we have
$$\mu(G)\,\leq\,
\sup_{x\in V}\,
P\big(\tau_G<\tau_V\,\big|\,X_0=x\big)\,
\sup_{y\in G}\,
E\big(\tau_V\,\big|\,X_0=y\big)\,.$$
\end{lemma}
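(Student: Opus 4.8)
The plan is to combine the Khas'minskii representation formula recalled just above with a strong Markov argument that bounds, for each starting point $x\in V$, the expected number of visits to $G$ made before the first return to $V$. Throughout I would set $\tau_G=\min\{\,n\geq 0:X_n\in G\,\}$, the first \emph{entrance} time into $G$, while $\tau_V=\min\{\,n\geq 1:X_n\in V\,\}$ remains the first \emph{return} time to $V$, exactly as in the representation formula. The asymmetry of these two conventions is natural here, since the integration is over starting points $x\in V$.

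First I would record a deterministic pointwise bound on the integrand. In the sum $\sum_{k=0}^{\tau_V-1}1_G(X_k)$ every term with index $k<\tau_G$ vanishes by definition of $\tau_G$, so the whole sum is zero unless $\tau_G<\tau_V$; and since each indicator is at most $1$, the number of surviving terms is at most $\tau_V-\tau_G$. Hence
$$\sum_{k=0}^{\tau_V-1}1_G(X_k)\,\leq\,
1_{\{\,\tau_G<\tau_V\,\}}\,(\tau_V-\tau_G)\,.$$

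Next I would apply the strong Markov property at the stopping time $\tau_G$. On the event $\{\,\tau_G<\tau_V\,\}$ the chain occupies a point $X_{\tau_G}=y\in G$ and has not yet returned to $V$, so conditionally on $X_{\tau_G}=y$ the overshoot $\tau_V-\tau_G$ has the law of $\tau_V$ started afresh from $y$. Taking conditional expectations, then bounding $E(\tau_V\,|\,X_0=y)$ by its supremum over $y\in G$, yields for every $x\in V$
$$E\Big(\sum_{k=0}^{\tau_V-1}1_G(X_k)\,\Big|\,X_0=x\Big)
\,\leq\,
P\big(\tau_G<\tau_V\,\big|\,X_0=x\big)\,
\sup_{y\in G}E\big(\tau_V\,\big|\,X_0=y\big)\,.$$

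Finally I would substitute this into the representation formula $\mu(G)=\frac{1}{\mu(V)}\int_V d\mu(x)\,E\big(\sum_{k=0}^{\tau_V-1}1_G(X_k)\,\big|\,X_0=x\big)$. Bounding $P(\tau_G<\tau_V\,|\,X_0=x)$ by its supremum over $x\in V$, pulling both suprema out of the integral, and using $\int_V d\mu(x)=\mu(V)$, the prefactor $1/\mu(V)$ cancels and the claimed inequality drops out. The one step demanding care — and the only real obstacle — is the bookkeeping in the strong Markov identity for $\tau_V-\tau_G$ on $\{\,\tau_G<\tau_V\,\}$: one must check that the entrance-time/return-time conventions interact correctly, in particular in the degenerate case $x\in V\cap G$ where $\tau_G=0$ and $\tau_V-\tau_G=\tau_V$ is simply the return time from $x$.
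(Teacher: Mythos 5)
Your proof is correct and follows essentially the same route as the paper: both start from the Khas'minskii representation formula, note that the sum $\sum_{k=0}^{\tau_V-1}1_G(X_k)$ vanishes off the event $\{\tau_G<\tau_V\}$, apply the strong Markov property at $\tau_G$, and bound the remaining count of visits to $G$ by the return time to $V$ started from $X_{\tau_G}\in G$. The only cosmetic difference is that the paper decomposes explicitly over the landing point $y=X_{\tau_G}$ and bounds the restarted sum by $\tau_V$ under $P_y$, whereas you bound the original sum by $\tau_V-\tau_G$ and identify its conditional law; these are the same argument.
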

\begin{proof}
From the representation formula for the invariant measure,
we obtain that
$$\mu(G)\,\leq\,
\sup_{x\in V}\,
E\Big(\sum_{k=0}^{\tau_V-1}1_G(X_k)\,\Big|\,X_0=x\Big)\,.$$
Let us try to bound the last expectation.
We denote by $E_x$ the expectation for the Markov chain
starting from~$x$.
We have
$$\displaylines{
E_x\Big(\sum_{k=0}^{\tau_V-1}1_G(X_k)\Big)
\,=\,
E_x\Big(
\sum_{y\in G}
1_{\tau_G<\tau_V}
1_{X_{\tau_G}=y}
\sum_{k=0}^{\tau_V-1}1_G(X_k)\Big)
\cr
\,=\,
\sum_{y\in G}
E_y\Big(
\sum_{k=0}^{\tau_V-1}1_G(X_k)\Big)
P_x\big(
\tau_G<\tau_V,\,
{X_{\tau_G}=y}
\big)
\cr
\,\leq\,
\sum_{y\in G}
E_y\big(
{\tau_V}\big)
P_x\big(
\tau_G<\tau_V,\,
{X_{\tau_G}=y}
\big)\cr
\,\leq\,
\sup_{y\in G}
E_y\big(
{\tau_V}\big)
\,
P_x\big(
\tau_G<\tau_V
\big)
\,.}$$
Taking the supremum over $x\in V$, we obtain the inequality stated in the lemma.
\end{proof}

\noindent
For $x\in\cE$, we define
$$T(x)
\,=\,
\inf\,\big\{\,n\geq 0: X_n=x
\big\}\,.$$
\begin{lemma}\label{indepday} 
Let $k\geq 1$ and let $x_1,\dots,x_k$ be $k$ distinct points of~$\cE$.
The random variables
$X_{T(x_1)+1},\dots,
X_{T(x_k)+1}$ are independent.
\end{lemma}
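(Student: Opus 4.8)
The plan is to prove independence by realizing the chain through an explicit family of independent transition variables, which makes the role of each first visit completely transparent. Concretely, I would introduce a collection of independent random variables $(U^x_j)_{x\in\cE,\,j\geq 1}$, where for each fixed $x$ the variables $U^x_j$, $j\geq 1$, are i.i.d. with law $p(x,\cdot)$, the whole family across different states being independent. One then builds the chain $(X_n)_{n\geq 0}$ from its given starting point by the rule: whenever the chain stands at a state $x$ for the $j$-th time (counting visits up to and including the current instant), the next state is set equal to $U^x_j$. A routine induction on $n$ shows that the process constructed in this way is a Markov chain with transition matrix $p$, hence has the same law as the chain in the statement, so it suffices to prove the claim for this particular realization.

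The key observation is then purely combinatorial: for each $i$, the first time the chain reaches $x_i$ is, by definition, the first visit to $x_i$, so the transition performed at time $T(x_i)$ uses the variable $U^{x_i}_1$. In other words,
\[
X_{T(x_i)+1}\,=\,U^{x_i}_1\,,\qquad 1\leq i\leq k\,.
\]
Since the points $x_1,\dots,x_k$ are distinct, the index pairs $(x_1,1),\dots,(x_k,1)$ are distinct, so $U^{x_1}_1,\dots,U^{x_k}_1$ form a sub-collection of the independent family $(U^x_j)$ and are therefore independent, each $U^{x_i}_1$ having law $p(x_i,\cdot)$. This yields at once the independence of $X_{T(x_1)+1},\dots,X_{T(x_k)+1}$, together with the identification of their marginal laws.

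The only point requiring care is the well-definedness of $X_{T(x_i)+1}$, i.e., the finiteness of the hitting times $T(x_i)$: on a finite state space this holds as soon as each $x_i$ is reached almost surely, and in the applications of this lemma the chain is irreducible (after the modification at the absorbing state described in the proof of proposition~\ref{sfj}), so $T(x_i)<\infty$ almost surely and no further argument is needed. I expect the main subtlety to be the verification of the displayed identity $X_{T(x_i)+1}=U^{x_i}_1$ despite the random and interleaved order in which the states $x_1,\dots,x_k$ are first visited; this is exactly what the transition-variable construction removes, since the bookkeeping of visit numbers assigns the first transition out of each state unambiguously, independently of the global ordering of the $T(x_i)$. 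An alternative route, through repeated application of the strong Markov property after conditioning on the order of first visits, would reach the same conclusion but at the cost of a more cumbersome case analysis.
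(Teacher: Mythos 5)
Your proof is correct, but it takes a genuinely different route from the paper's. The paper argues by induction on $k$: it sets $T=\min_i T(x_i)$, decomposes according to which of the $x_i$ is visited first, applies the Markov property at time $T$, and invokes the induction hypothesis for the remaining $k-1$ points started from $y_i$, arriving at the factorization $P\big(X_{T(x_1)+1}=y_1,\dots,X_{T(x_k)+1}=y_k\big)=\prod_{j}p(x_j,y_j)$. Your stack-of-transition-variables realization replaces that induction and its case analysis over the random order of first visits by the single structural identity $X_{T(x_i)+1}=U^{x_i}_1$, from which independence, together with the marginal laws $p(x_i,\cdot)$, is immediate. What this buys is transparency and a proof that does not care how the hitting times interleave; the price is that you must justify two routine but non-vacuous points that the paper's argument does not need: first, that the construction really produces a chain with transition matrix $p$ (the key fact being that $U^{X_n}_{j_n}$ has not been consumed by time $n$, so its conditional law given the past is still $p(X_n,\cdot)$), and second, that the conclusion of the lemma depends only on the law of the path, so that proving it for one particular realization suffices. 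Both proofs rest on the same implicit hypothesis that the $T(x_i)$ are almost surely finite --- the paper's last step uses $\sum_i P(T=T(x_i))=1$ --- and you correctly observe that this holds in the lemma's application after the modification making the auxiliary chain irreducible.
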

\begin{proof}
We do the proof by induction over~$k$.
For $k=1$, there is nothing to prove.
Let $k\geq 2$ and
suppose that the result has been proved until rank $k-1$.
Let $x_1,\dots,x_k$ be $k$ distinct points of~$\cE$.
Let $y_1,\dots,y_k$ be $k$ points of~$\cE$.
Let us set
$$T\,=\,\min\,\big\{\,T(x_i):1\leq i\leq k\,\big\}\,.$$
Let us compute
$$\displaylines{
P\big(
X_{T(x_1)+1}=y_1,\dots,
X_{T(x_k)+1}=y_k\big)
\,=\,\hfill\cr
\sum_{1\leq i\leq k}
P\big(
X_{T(x_1)+1}=y_1,\dots,
X_{T(x_k)+1}=y_k,T=T(x_i)\big)
\cr
\,=\,
\sum_{1\leq i\leq k}
P\big(
X_{T(x_1)+1}=y_1,\dots,
X_{T(x_k)+1}=y_k\,|\,
T=T(x_i)\big)
P\big(T=T(x_i)\big)
\cr
\,=\,
\sum_{1\leq i\leq k}
P\big(\forall j\neq i
\quad
X_{T(x_j)+1}=y_j,\,X_1=y_i
\,|\,
X_0=x_i\big)
P\big(T=T(x_i)\big)
\cr
\,=\,
\sum_{1\leq i\leq k}
p(x_i,y_i)
P\big(\forall j\neq i
\quad
X_{T(x_j)+1}=y_j
\,|\,
X_0=y_i\big)
P\big(T=T(x_i)\big)\,.
}$$
We use the induction hypothesis:
$$P\big(\forall j\neq i
\quad
X_{T(x_j)+1}=y_j
\,|\,
X_0=y_i\big)
\,=\,\prod_{j\neq i}p(x_j,y_j)\,.$$
Reporting in the sum, we get
$$\displaylines{
P\big(
X_{T(x_1)+1}=y_1,\dots,
X_{T(x_k)+1}=y_k\big)
\,=\,\hfill\cr
\,=\,
\sum_{1\leq i\leq k}
\prod_{1\leq j\leq k}p(x_j,y_j)
P\big(T=T(x_i)\big)
\,=\,
\prod_{1\leq j\leq k}p(x_j,y_j)
\,.
}$$
This completes the induction step and the proof.
\end{proof}
\begin{lemma}\label{ebound} 
Let $\tau$ be a stopping time associated to 
the Markov chain 
$(X_t)_{t\geq 0}$. If there exists an integer $k$ and $\beta$ positive such that
$$\forall x\in\cE\qquad
P\big(\tau\leq k
\,|\,
X_0=x\big)\,\geq\,\beta\,,$$
then we have
$$\forall x\in\cE\qquad
E\big(\tau
\,|\,
X_0=x\big)\,\leq\,
\frac{k}{\beta}
\,.$$
\end{lemma}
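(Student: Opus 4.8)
The plan is to reduce the statement to a geometric tail bound on~$\tau$ and then integrate it. I write $P_x$ and $E_x$ for the probability and expectation for the chain started at~$x$, and $\mathcal F_n$ for the $\sigma$--field generated by $X_0,\dots,X_n$.

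First I would establish the key estimate
$$\forall n\geq 0\quad\forall x\in\cE\qquad
P_x(\tau>nk)\,\leq\,(1-\beta)^n\,,$$
by induction on~$n$. The case $n=0$ is trivial, since a probability never exceeds~$1$. For the inductive step, note that $\{\tau>nk\}\subseteq\{\tau>(n-1)k\}$, so, conditioning on $\mathcal F_{(n-1)k}$,
$$P_x(\tau>nk)\,=\,
E_x\Big(1_{\{\tau>(n-1)k\}}\,P\big(\tau>nk\mid\mathcal F_{(n-1)k}\big)\Big)\,.$$
Since $\{\tau>(n-1)k\}\in\mathcal F_{(n-1)k}$, on this event the Markov property at the deterministic time $(n-1)k$ lets me restart the chain from $X_{(n-1)k}$; the residual requirement $\{\tau>nk\}$ then becomes the event that the restarted chain is not stopped during the next $k$ steps, which by the hypothesis $P_y(\tau\leq k)\geq\beta$ has conditional probability at most $1-\beta$. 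Hence the inner factor is bounded by $1-\beta$, and combining with the induction hypothesis yields
$$P_x(\tau>nk)\,\leq\,(1-\beta)\,P_x(\tau>(n-1)k)\,\leq\,(1-\beta)^n\,.$$

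Then I would recover the expectation from the tail sum. Using $E_x(\tau)=\sum_{n\geq 0}P_x(\tau>n)$ and the monotonicity of $n\mapsto P_x(\tau>n)$, I group the terms into consecutive blocks of length~$k$, bounding each of the $k$ terms in the $j$--th block by $P_x(\tau>jk)$:
$$E_x(\tau)\,=\,\sum_{n=0}^\infty P_x(\tau>n)
\,\leq\,k\sum_{j=0}^\infty P_x(\tau>jk)
\,\leq\,k\sum_{j=0}^\infty(1-\beta)^j\,=\,\frac{k}{\beta}\,,$$
which is exactly the claimed bound.

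The only delicate point is the restart argument in the inductive step: one must know that, on $\{\tau>(n-1)k\}$, the event $\{\tau>nk\}$ coincides with the event that the chain shifted by $(n-1)k$ avoids being stopped for a further $k$ steps, so that the uniform hypothesis can be applied at the state $X_{(n-1)k}$. This is automatic for the hitting times to which the lemma is applied in the paper, and in general it follows from the strong Markov property together with the fact that the residual time of a stopping time after a deterministic time is again a stopping time of the shifted chain. Everything else is a routine telescoping of the Markov property and the summation of a geometric series.
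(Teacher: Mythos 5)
Your proof is correct and follows essentially the same route as the paper: the uniform hypothesis is iterated via the Markov property to get the geometric tail bound $P_x(\tau>nk)\leq(1-\beta)^n$, and the expectation is then recovered by summing the tail in blocks of length~$k$. You are in fact slightly more careful than the paper in spelling out the restart argument at the deterministic times $(n-1)k$, which the paper leaves implicit.
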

\begin{proof}
Reversing the inequality, we have
$$\forall x\in\cE\qquad
P\big(\tau>k
\,|\,
X_0=x\big)\,\leq\,1-\beta\,.$$
Since the bound is uniform with respect to~$x$, we prove by induction
on~$n$ that
$$\forall x\in\cE\quad\forall n\geq 1\qquad
P\big(\tau>nk
\,|\,
X_0=x\big)\,\leq\,(1-\beta)^n\,.$$
We computation next the expectation of $\tau$ as follows:
for $x\in\cE$,
$$\displaylines{
E\big(\tau
\,|\,
X_0=x\big)\,=\,
\sum_{n=0}^\infty
P\big(\tau>n \,|\, X_0=x\big)\hfill\cr
\,\leq\,
\sum_{i=0}^\infty
\sum_{l=0}^{k-1}
P\big(\tau>ik
+l \,|\, X_0=x\big)
\cr
\,\leq\,
\sum_{i=0}^\infty
k
P\big(\tau>ik
\,|\, X_0=x\big)
\cr
\,\leq\,
\sum_{i=0}^\infty
k(1-\beta)^i
\,\leq\,
\frac{k
}{\beta}
}$$
as requested.
\end{proof}

\section{Monotonicity}
We first recall some standard definitions concerning monotonicity 
and coupling
for stochastic processes.
A classical reference
is Liggett's book
\cite{LIG},
especially for applications to particle systems. 
In the next two definitions,
we consider
a discrete time Markov chain 
$(X_n)_{n\geq 0}$ 
with
values in a space $\cE$.
We suppose that the state space $\cE$ is finite and
that it is equipped with a partial order $\leq$.
A function $f:\cE\to\R$ is non--decreasing if
$$\forall x,y\in\cE\qquad
x\leq y\quad\Rightarrow\quad f(x)\leq f(y)\,.$$
\begin{definition}
The Markov chain
$(X_n)_{n\geq 0}$ is said to be monotone if, 
for any non--decreasing function $f$, the function
$$x\in\cE\mapsto E\big(f(X_n)\,|\,X_0=x\big)$$
is non--decreasing.
\end{definition}
A natural way to prove monotonicity is to construct an adequate coupling.
\begin{definition}
A coupling 
for the Markov chain
$(X_n)_{n\geq 0}$ 
is a family of processes
$(X_n^x)_{n\geq 0}$
indexed by 
$x\in\cE$, which are all defined on the same probability space, and such that, 
for $x\in\cE$, the process
$(X_n^x)_{n\geq 0}$ is the Markov chain 
$(X_n)_{n\geq 0}$ 
starting from $X_0=x$.
The coupling is said to be monotone if
$$\forall x,y\in\cE\qquad
x\leq y\quad\Rightarrow\quad \forall n\geq 1\qquad X_n^x\leq X_n^y\,.$$
%
%
%
\end{definition}
If there exists a monotone coupling, 
then the 
Markov chain
is monotone.

\section{Stochastic domination}
\label{apstoc}
Let $\mu,\nu$ be two probability measures on $\mathbb R$.
We say that $\nu$ stochastically dominates $\mu$,
which we denote by $\mu\preceq\nu$, if
for any non--decreasing positive function $f$, we have
$\mu(f)\leq \nu(f)$.
\begin{lemma}\label{binopoi}
Let $n\geq 1$, $p\in [0,1]$, $\lambda>0$ be such that
$(1-p)^n\geq\exp(-\lambda)$.
Then
the binomial law
$\cB(n,p)$ of parameters $n,p$
is stochastically dominated by the Poisson 
law $\cP(\lambda)$ of parameter $\lambda$.
\end{lemma}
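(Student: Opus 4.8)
The plan is to establish the domination by exhibiting an explicit monotone coupling: I will build, on a single probability space, a random variable with law $\cB(n,p)$ and a random variable with law $\cP(\lambda)$ such that the former lies pointwise below the latter. Once such a coupling is available, for any non-decreasing positive $f$ one has $f$ of the smaller variable $\le f$ of the larger one, and taking expectations yields $\cB(n,p)\preceq\cP(\lambda)$. This is the only direction of the coupling characterization of stochastic domination that I need, and it is immediate.

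First I would record that the hypothesis forces $p<1$: indeed, if $p=1$ then $(1-p)^n=0<e^{-\lambda}$, contradicting $(1-p)^n\ge e^{-\lambda}$. Hence I may set $\mu_0=-\ln(1-p)=\ln\frac{1}{1-p}\in[0,\infty)$, so that $1-p=e^{-\mu_0}$ and $p=1-e^{-\mu_0}$. Taking logarithms in the assumption $(1-p)^n\ge e^{-\lambda}$ converts it into the single clean inequality $n\mu_0\le\lambda$, which is what the construction below will consume.

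The coupling itself rests on one elementary observation: a Bernoulli$(p)$ variable is the image of a Poisson$(\mu_0)$ variable under the map $k\mapsto\min(k,1)$, since a Poisson$(\mu_0)$ variable vanishes with probability $e^{-\mu_0}=1-p$. Concretely, I would take independent variables $P_1,\dots,P_n$ each with law $\cP(\mu_0)$ together with one further independent variable $R$ with law $\cP(\lambda-n\mu_0)$; the latter is legitimate precisely because $\lambda-n\mu_0\ge 0$. Then each $\min(P_i,1)$ is Bernoulli$(p)$, so $\sum_{i=1}^n\min(P_i,1)$ has law $\cB(n,p)$, while by additivity of independent Poisson laws $\sum_{i=1}^nP_i+R$ has law $\cP\big(n\mu_0+(\lambda-n\mu_0)\big)=\cP(\lambda)$. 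Since $\min(P_i,1)\le P_i$ for every $i$ and $R\ge 0$, we get $\sum_{i=1}^n\min(P_i,1)\le\sum_{i=1}^nP_i+R$ almost surely, which is exactly the coupling sought.

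There is essentially no hard step here; conceptually the argument fuses two monotone couplings, a thinning step $\cB(n,p)\preceq\cP(n\mu_0)$ and the obvious monotonicity $\cP(n\mu_0)\preceq\cP(\lambda)$ for $n\mu_0\le\lambda$, into a single construction. The only thing to watch is the boundary bookkeeping: I must ensure $p<1$ so that $\mu_0$ is finite (done above), check that the degenerate case $p=0$, equivalently $\mu_0=0$, causes no trouble (then each $P_i=0$ almost surely and $\cB(n,0)=\delta_0$ is trivially dominated), and confirm via $n\mu_0\le\lambda$ that the extra Poisson term $R$ is well defined.
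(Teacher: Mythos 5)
Your proof is correct and follows essentially the same route as the paper: both realize each Bernoulli$(p)$ variable as $\min(P_i,1)$ for independent $P_i\sim\cP(-\ln(1-p))$ and add an independent Poisson remainder so that the total is $\cP(\lambda)$, then conclude by the pointwise inequality. Your write--up is a bit more careful about the boundary cases $p\in\{0,1\}$ and states the remainder's parameter with the correct sign, namely $\lambda-n\mu_0$ with $\mu_0=-\ln(1-p)$, whereas the paper's displayed parameter $\lambda-n\ln(1-p)$ carries an apparent sign slip.
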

\begin{proof}
Let $X_1,\dots,X_n$ be independent random variables
with common law the Poisson law of parameter $-\ln(1-p)$.
Let $Y$ be a further random variable, independent of $X_1,\dots,X_n$,
with law 
the Poisson law of parameter $\lambda-n\ln(1-p)$.
Obviously, we have
$$Y+X_1+\dots+X_n\,\geq\,\min(X_1,1)+\dots+\min(X_n,1)\,.$$
Moreover, the law of the lefthand side is the Poisson
law of parameter~$\lambda$, while the law of the righthand side
is the binomial law $\cB(n,p)$.
\end{proof}
\begin{lemma}\label{poisstail}
Let $\lambda>0$ and let $Y$ be a random variable with law the 
Poisson law $\cP(\lambda)$ of parameter~$\lambda$.
For any $t\geq\lambda$, we have
$$P(Y\geq t)\,\leq\,\Big(\frac{\lambda e}{t}\Big)^t\,.$$
\end{lemma}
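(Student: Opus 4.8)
The plan is to use the exponential Chebyshev (Chernoff) bound. Since $Y\geq 0$, for any $\theta>0$ I would start from Markov's inequality applied to $e^{\theta Y}$, namely $P(Y\geq t)=P(e^{\theta Y}\geq e^{\theta t})\leq e^{-\theta t}E(e^{\theta Y})$. The moment generating function of the Poisson law is explicit, $E(e^{\theta Y})=\exp(\lambda(e^\theta-1))$, so that for every $\theta>0$ one gets $P(Y\geq t)\leq\exp\big(\lambda(e^\theta-1)-\theta t\big)$.

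Next I would optimize the exponent over $\theta$. Differentiating the map $\theta\mapsto\lambda(e^\theta-1)-\theta t$ and setting the derivative equal to zero yields $\lambda e^\theta=t$, i.e. $\theta=\ln(t/\lambda)$. The hypothesis $t\geq\lambda$ is exactly what guarantees $\theta\geq 0$, so this choice is admissible in the Chernoff bound. Substituting this value back into the exponent gives $P(Y\geq t)\leq\exp\big(t-\lambda-t\ln(t/\lambda)\big)$.

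Finally I would compare this with the claimed bound. Writing the right-hand side of the statement as $\big(\lambda e/t\big)^t=\exp\big(t(1+\ln(\lambda/t))\big)=\exp\big(t-t\ln(t/\lambda)\big)$, one sees that the two exponents differ only by the additive term $-\lambda$. Since $-\lambda\leq 0$, discarding it only weakens the inequality, whence $P(Y\geq t)\leq\exp\big(t-t\ln(t/\lambda)\big)=\big(\lambda e/t\big)^t$, which is the desired estimate; in fact the argument furnishes the slightly sharper bound with an extra factor $e^{-\lambda}$.

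There is no real obstacle here: the whole argument is a routine one-line computation with the Poisson moment generating function. The only point requiring genuine care is verifying that the optimizing value $\theta=\ln(t/\lambda)$ is nonnegative, and this is precisely where the hypothesis $t\geq\lambda$ is used.
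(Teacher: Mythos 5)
Your proof is correct, but it takes a genuinely different route from the paper. The paper argues directly on the Poisson series: it writes $P(Y\geq t)=\sum_{k\geq t}\frac{\lambda^{k-t}}{k!}e^{-\lambda}\lambda^t$, bounds $\lambda^{k-t}\leq t^{k-t}$ for $k\geq t\geq\lambda$, and then recognizes $\sum_{k\geq t}t^{k-t}/k!\leq t^{-t}e^{t}$, which yields the claim (with the same spare factor $e^{-\lambda}$ that you obtain). You instead run the standard Chernoff argument: Markov's inequality applied to $e^{\theta Y}$, the explicit Poisson moment generating function, and optimization at $\theta=\ln(t/\lambda)$, with the hypothesis $t\geq\lambda$ ensuring the optimizer is admissible. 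The two proofs are equally short and give the identical (slightly sharper) bound $e^{-\lambda}(\lambda e/t)^t$; the paper's version is more bare-hands and avoids any calculus, while yours fits naturally with the large-deviations machinery the appendix develops immediately afterwards --- the exponent $t\ln(t/\lambda)-t+\lambda$ you produce is precisely the Cram\'er transform $\Lambda_Y^*(t)$ that the paper computes just after this lemma. The only point worth a word of care in your write-up is the boundary case $t=\lambda$, where the optimizing $\theta$ is $0$ and the Chernoff step degenerates to the trivial bound $P(Y\geq t)\leq 1$; since $(\lambda e/t)^t=e^{\lambda}\geq 1$ there, the stated inequality still holds, so this is not a gap.
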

\begin{proof}
We write
\begin{multline*}
P(Y\geq t)\,=\,
\sum_{k\geq t}
\frac{\lambda^k}{k!}\exp(-\lambda)
\,=\,
\sum_{k\geq t}
\frac{\lambda^{k-t}}{k!}\exp(-\lambda)\lambda^t
\cr
\,\leq\,
\sum_{k\geq t}
\frac{t^{k-t}}{k!}\exp(-\lambda)\lambda^t
\,\leq\,
\Big(\frac{\lambda e}{t}\Big)^t\,.
\end{multline*}
\end{proof}

\noindent
Let $Y$ be a random variable following the Poisson law $\cP(\lambda)$.
For any $t\in\R$, we have
$$\Lambda_Y(t)\,=\,\ln E\big(\exp(tY)\big)\,=\,
\ln\Big(
\sum_{k=0}^\infty
\frac{\lambda^k}{k!}\exp(-\lambda+kt)\Big)\,=\,
\lambda \big(\exp(t)-1\big)\,.$$
For any $\alpha,t\in\R$, 
$$\Lambda_{\alpha Y}(t)\,=\,
\Lambda_{Y}(\alpha t)
\,=\,\lambda \big(\exp(\alpha t)-1\big)\,.$$
Let us compute
the Fenchel--Legendre transform
$\Lambda_{\alpha Y}^*$.
By definition, for $x\in\R$,
$$\Lambda_{\alpha Y}^*(x)\,=\,
\sup_{t\in\R}\Big(tx-
\lambda \big(\exp(\alpha t)-1\big)\Big)\,.$$
The maximum is attained at $t=(1/\alpha)\ln (x/(\lambda\alpha))$,
hence
$$\Lambda_{\alpha Y}^*(x)\,=\,
\frac{x}{\alpha}\ln\Big(\frac{x}{\lambda\alpha}\Big)
-
\frac{x}{\alpha}+\lambda\,.$$
\begin{lemma}
\label{bnpp}
Let $p\in[0,1]$ and let $n\geq 1$. Let $X$ be a random variable following
the binomial law $\cB(n,p)$. 
Let $Y$ be a random variable following
the Poisson law $\cP(np)$. For any $\alpha\in\R$,
we have
$\Lambda^*_{\alpha X}\geq \Lambda^*_{\alpha Y}$.
\end{lemma}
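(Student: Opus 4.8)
The plan is to reduce the asserted inequality between Cram\'er transforms to a pointwise inequality between the two cumulant generating functions, exploiting the elementary fact that the Fenchel--Legendre transform reverses the pointwise order. First I would compute the two log-moment generating functions explicitly: for $X$ following $\cB(n,p)$ one has $\Lambda_X(s)=\ln E\big(e^{sX}\big)=n\ln\big(1-p+pe^s\big)$, while for $Y$ following $\cP(np)$ one has, exactly as recorded just before the lemma, $\Lambda_Y(s)=np\big(e^s-1\big)$.

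The heart of the argument is then the claim that $\Lambda_X(s)\le\Lambda_Y(s)$ for every $s\in\R$. Writing $u=p(e^s-1)$, this amounts to $n\ln(1+u)\le nu$, i.e.\ to the classical concavity inequality $\ln(1+u)\le u$, valid for all $u>-1$. Since $p\in[0,1]$ and $e^s>0$, we have $1-p+pe^s=1+u>0$ and $u>-p\ge -1$, so the inequality applies and the claim follows.

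To pass from this to the scaled variables, I would use the identities $\Lambda_{\alpha X}(t)=\Lambda_X(\alpha t)$ and $\Lambda_{\alpha Y}(t)=\Lambda_Y(\alpha t)$, which immediately give $\Lambda_{\alpha X}(t)\le\Lambda_{\alpha Y}(t)$ for every $t\in\R$. Finally, since the Fenchel--Legendre transform is order-reversing (if $g\le h$ pointwise then, for every $x$, $g^*(x)=\sup_t\big(tx-g(t)\big)\ge\sup_t\big(tx-h(t)\big)=h^*(x)$), I conclude that $\Lambda^*_{\alpha X}\ge\Lambda^*_{\alpha Y}$, which is the desired statement.

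There is no serious obstacle here: the lemma is essentially the monotonicity of the Legendre transform combined with a single convexity inequality, and the scaling step is a routine substitution. The only point demanding a moment's care is verifying that $1-p+pe^s$ stays strictly positive (so that $\Lambda_X$ is finite and the logarithm is legitimate) and that $u>-1$ throughout the real line, both of which are immediate consequences of $p\le 1$.
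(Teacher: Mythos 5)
Your proof is correct and follows essentially the same route as the paper: the pointwise bound $n\ln(1-p+pe^t)\leq np(e^t-1)$ (i.e.\ $\ln(1+u)\leq u$), the scaling identity $\Lambda_{\alpha X}(t)=\Lambda_X(\alpha t)$, and the order-reversing property of the Fenchel--Legendre transform. Your write-up merely makes the substitution $u=p(e^s-1)$ and the positivity check explicit, which the paper leaves implicit.
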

\begin{proof}
For any $t\in\R$, we have
$$\Lambda_X(t)\,=\,\ln E\big(\exp(tX)\big)\,=\,
n\ln\big(1-p+p\exp(t)\big)\,\leq\,np\big(\exp(t)-1\big)\,.$$
For any $\alpha,t\in\R$, 
$$\Lambda_{\alpha X}(t)\,=\,
\Lambda_{X}(\alpha t)
\,\leq\,np\big(\exp(\alpha t)-1\big)\,.$$
Thus, taking $\lambda=np$, we conclude that
$$\forall t\in\R\qquad
\Lambda_{\alpha X}(t)\,\leq\, \Lambda_{\alpha Y}(t)\,.$$
Taking the Fenchel--Legendre transform, we obtain
$$\forall x\in\R\qquad
\Lambda^*_{\alpha X}(x)\,\geq\, \Lambda^*_{\alpha Y}(x)\,$$
as required.
\end{proof}

\section{Binomial estimate}
We recall a basic estimate for the binomial coefficients. 
\begin{lemma}
\label{cnk}
For any $n\geq 1$, 
any $k\in\{\,0,\dots, n\,\}$, we have
$$
\Big|
\ln
\frac{n!}{ k!(n-k)!}
+
k\ln \frac{k}{ n}
+(n-k)\ln \frac{n-k}{ n}
\Big|
\,\leq\,
2\ln n+3
\,.$$
\end{lemma}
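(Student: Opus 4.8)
The plan is to rewrite the quantity inside the absolute value as a single error term and to show that, after an exact cancellation of the dominant contributions, only elementary Stirling remainders survive. Write $E(n,k)=\ln\frac{n!}{k!(n-k)!}+k\ln\frac{k}{n}+(n-k)\ln\frac{n-k}{n}$ for the expression on the left-hand side. First I would dispose of the two boundary cases: when $k=0$ or $k=n$ one has $\binom{n}{k}=1$ and, with the convention $0\ln 0=0$, every term of $E(n,k)$ vanishes, so the claimed bound holds trivially. From now on assume $1\le k\le n-1$, so that all factorials in sight carry a strictly positive argument.

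The central step is an algebraic identity. Introduce, for $m\ge 1$, the remainder $R_m=\ln m!-m\ln m+m$. Substituting $\ln m!=m\ln m-m+R_m$ for $m\in\{n,k,n-k\}$ into $E(n,k)=\ln n!-\ln k!-\ln(n-k)!+k\ln(k/n)+(n-k)\ln((n-k)/n)$, I expect the linear terms to cancel because $-n+k+(n-k)=0$, and the logarithmic entropy terms to cancel as well, since $n\ln n-k\ln k-(n-k)\ln(n-k)+k\ln k-k\ln n+(n-k)\ln(n-k)-(n-k)\ln n=0$. Hence the whole expression collapses to $E(n,k)=R_n-R_k-R_{n-k}$, and the problem reduces to bounding the three remainders.

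For the remainders I would use the crude comparison of $\ln m!=\sum_{j=1}^m\ln j$ with integrals of the increasing function $\ln$. The inequalities $\int_{j-1}^{j}\ln x\,dx\le\ln j\le\int_{j}^{j+1}\ln x\,dx$, summed over $j$, give $\int_0^m\ln x\,dx\le\ln m!\le\int_1^{m+1}\ln x\,dx$, that is $m\ln m-m\le\ln m!\le(m+1)\ln(m+1)-m$. Translating these into bounds on $R_m$ yields $0\le R_m\le 1+\ln(m+1)$, where the upper bound uses $m\ln(1+1/m)\le 1$.

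It remains only to assemble the constants. For the upper direction, since $R_k,R_{n-k}\ge 0$ we get $E(n,k)\le R_n\le 1+\ln(n+1)\le\ln n+2$. For the lower direction, $E(n,k)\ge-R_k-R_{n-k}\ge-2-\ln(k+1)-\ln(n-k+1)$; the key observation here is that $1\le k\le n-1$ forces both $k+1\le n$ and $n-k+1\le n$, so each logarithm is at most $\ln n$ and $E(n,k)\ge-2-2\ln n$. Combining the two directions gives $|E(n,k)|\le 2\ln n+2\le 2\ln n+3$, as required. I do not anticipate a genuine obstacle: the only points of care are the boundary convention $0\ln 0=0$ and the verification that the entropy terms cancel \emph{exactly}; once the identity $E(n,k)=R_n-R_k-R_{n-k}$ is in hand the estimation is routine, and the generous right-hand side leaves ample slack.
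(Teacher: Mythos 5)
Your proof is correct and follows essentially the same route as the paper: both introduce the Stirling remainder ($R_m=\ln m!-m\ln m+m$, called $\phi(m)$ in the paper), observe the exact cancellation reducing the expression to $R_n-R_k-R_{n-k}$, and bound the remainder by comparing $\sum\ln j$ with an integral. Your explicit treatment of the boundary cases $k\in\{0,n\}$ and the slightly different remainder bounds ($0\le R_m\le 1+\ln(m+1)$ versus the paper's $1\le\phi(m)\le\ln m+2$) are only cosmetic differences.
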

\begin{proof}
The proof is standard (see for instance \cite{EL}).
Setting, for $n\in\mathbb N$, $\phi(n)=\ln n!-n\ln n+n$,
we have 
\begin{multline*}
\ln\frac{n!}{ k!(n-k)!}
\,=\,\ln n!-\ln k!
-\ln (n-k)!
\cr
\,=\,n\ln n-n+\phi(n)
-\big(k\ln k-k+\phi(k)\big)
-\big((n-k)\ln (n-k)-(n-k)+\phi(n-k)\big)
\\
\,=\,
-{k}\ln \frac{k}{n}
-{(n-k)}\ln \frac{n-k}{n}
+\phi(n)-\phi(k)-\phi(n-k)
\,.
\end{multline*}
Comparing the discrete sum 
$\ln n!=\sum_{1\leq k\leq n}\ln k$ 
to the integral 
$\int_1^n\ln x\,dx$,
we see that
$1\leq \phi(n)\leq \ln n +2$ for all $n\geq 1$. On one hand,
$$ \phi(n)-\phi(k)-\phi(n-k)\,\leq\, \ln n\,,$$
on the other hand,
$$ \phi(n)-\phi(k)-\phi(n-k)\,\geq\, 1-
(\ln k+2)
-(\ln (n-k)+2)
\,\geq\,
-3-2\ln n\,$$
and we have the desired inequalities.
\end{proof}

\section{Exponential inequalities}
\label{fixed}
\noindent
{\bf Hoeffding's inequality}.
We state Hoeffding's inequality for Bernoulli random variables
\cite{HOE}.
Suppose that $X$ is a random variable with law the binomial law
$\cB(n,p)$. We have
$$\forall t<np\qquad P(X<t)\,\leq\,\exp\Big(-\frac{2}{n}\big(np-t)^2\Big)\,.
$$
\noindent
{\bf Tchebytcheff exponential inequality}.
Let $X_1,\dots,X_n$ be i.i.d. random variables with common law~$\mu$.
Let $\Lambda$ be the Log--Laplace of~$\mu$, defined by
$$\forall t\in\R\qquad\Lambda(t)\,=
\ln\Big(\int_{\mathbb R}\exp(ts)\,d\mu(s)\Big)\,.$$
Let $\Lambda^*$ be the Cram\'er transform of~$\mu$, defined by
$$\forall x\in\R\qquad
\Lambda^*(x)\,=\,\sup_{t\in\R}\big(tx-\Lambda(t)\big)
\,.$$
We suppose that $\mu$ is integrable and we denote by $m$ its mean,
i.e., $m=\int_\R x\,d\mu(x)$. 
We have then 
$$\forall x\geq m\qquad
P\Big(\frac
{1}{n}\big(
{X_1+\cdots+X_n}\big)\,\geq\,x\Big)\,\leq\,\exp\big(-n\Lambda^*(x)\big)\,.$$

\vfill\eject
\bibliographystyle{plain}
\bibliography{sga}
 \thispagestyle{empty}
\vfill\eject
\tableofcontents

\end{document}